\renewcommand{\baselinestretch}{\baselinestretch}
\renewcommand{\baselinestretch}{1.1}
\numberwithin{equation}{section}
\newtheorem{thm}{Theorem}[section]
\newtheorem{lem}[thm]{Lemma}
\newtheorem{prop}[thm]{Proposition}
\theoremstyle{definition}
\theoremstyle{remark}
\newtheorem{rmk}[thm]{Remark}
\numberwithin{equation}{section}
\newcommand{\ra}{{\ \longrightarrow \ }}
\newcommand{\nra}{{\ \longarrownot\longrightarrow \ }}
\newcommand{\gen}{\text{gen}}
\newcommand{\ord}{\text{ord}}
\newcommand{\n}{{\mathbb N}}
\newcommand{\z}{{\mathbb Z}}
\newcommand{\q}{{\mathbb Q}}
\newcommand{\Mod}[1]{\ (\mathrm{mod}\ #1)}
\begin{document}
\title[Diagonal odd-regular ternary quadratic forms]{Diagonal odd-regular ternary quadratic forms}

\author{Mingyu Kim}

\address{Department of Mathematics, Sungkyunkwan University, Suwon 16419, Korea}
\email{kmg2562@skku.ac.kr}

\thanks{This research was supported by Basic Science Research Program through the National Research Foundation of Korea(NRF) funded by the Ministry of Education(grant number) (NRF-2019R1A6A3A01096245).}

\subjclass[2010]{Primary 11E12, 11E20} \keywords{representations of ternary quadratic forms}


\begin{abstract} 
A (positive definite primitive integral) quadratic form is called odd-regular if it represents every odd positive integer which is locally represented.
In this paper, we show that there are at most 147 diagonal odd-regular ternary quadratic forms and prove the odd-regularities of all but 6 candidates.
\end{abstract}

\maketitle

\section{Introduction}
In this paper, a quadratic form
$$
f(x_1,x_2,\dots,x_n)=\sum_{1\le i\le j\le n}f_{ij}x_ix_j\quad (f_{ij}\in \z )
$$
always refers to a positive definite (non-classic) integral quadratic form that is primitive in the sense that the coefficients of the quadratic form are coprime, that is,
$$
(f_{11},\dots,f_{ij},\dots,f_{nn})=1.
$$
We say that $f$ is \textit{regular} if it represents every integer which is represented by its genus.
Jones and Pall \cite{JP} gave the list of all diagonal regular ternary quadratic forms. 
Watson proved in his thesis \cite{W1} that there are only finitely many equivalence classes of regular ternary quadratic forms.
Jagy, Kaplansky and Schiemann \cite{JKS} succeeded Watson's study on regular quadratic forms and provide the list of 913 candidates of regular ternary quadratic forms. 
All but 22 of them are already proved to be regular at that time. 
Oh \cite{O1} proved the regularities of 8 forms among remaining 22 candidates.
A conditional proof for the remaining 14 candidates under the Generalized Riemann Hypothesis was given by Lemke Oliver \cite{L}.

For a positive integer $d$ and a nonnegative integer $a$, define a set 
$$
S_{d,a}=\{ dn+a: n\in \z_{\ge 0} \}.
$$
In \cite{O2}, the notion of $S_{d,a}$-regularity is introduced.
A positive definite integral quadratic form $f$ is called \textit{$S_{d,a}$-regular} if the following two conditions hold;
\begin{enumerate} [(i)]
\item $f$ represents every integer in the set $S_{d,a}$ which is represented by its genus;
\item the genus of $f$ represents at least one integer in $S_{d,a}$.
\end{enumerate}
In the same paper, it was proved that there is a polynomial $R(x,y)\in \q(x,y)$ such that the discriminant $df$ of any $S_{d,a}$-regular ternary quadratic form $f$ is bounded by $R(d,a)$.
This implies the finiteness of the $S_{d,a}$-regular ternary quadratic forms (up to equivalence) for any given $d$ and $a$.
We say that a quadratic form $f$ is \textit{odd-regular} if it is $S_{2,1}$-regular.

Kaplansky \cite{K} showed that there are exactly 3 diagonal ternary quadratic forms that represent all odd positive integers. They are
$$
x^2+y^2+2z^2,\quad x^2+2y^2+3z^2,\quad x^2+2y^2+4z^2
$$
and by definition, all of these forms are odd-regular.
Note that the above three forms are regular and thus contained in the list of 102 diagonal regular ternary quadratic forms given in \cite{JP}.

In this article, we show that there are at most 147 diagonal odd-regular ternary quadratic forms and prove the regularities of all but 6 candidates.
Since all diagonal regular ternary quadratic forms are odd-regular,
we actually show that there are at most 45 diagonal odd-regular ternary quadratic forms which are not regular and prove the odd-regularities of 39 forms. 
To bound the discriminant of a diagonal odd-regular ternary quadratic forms,
we compute some quantities on the representation of primes in an arithmetic progression by a binary quadratic form.

In 1840, Dirichlet conjectured that any binary quadratic form representing an integer in a given arithmetic progression represents infinitely many primes in that arithmetic progression.
The conjecture was proved by Meyer \cite{M}.
Later in 2003, Halter-Koch \cite[Proposition 1]{Ha} showed that if a binary quadratic form represents an integer in a given arithmetic progression,
then the set of primes in the arithmetic progression represented by the binary quadratic form has positive Dirichlet density.

For a $\z$-lattice $L=\z x_1+\z x_2+\cdots+\z x_k$, the \textit{matrix presentation $M_L$ of $L$} is the matrix
$$
M_L=(B(x_i,x_j))_{1\le i,j\le k},
$$
and the corresponding quadratic form is defined as
$$
\sum_{i,j=1}^k B(x_i,x_j)x_ix_j.
$$
The determinant of the matrix $B((x_i,x_j))$ is called the \textit{discriminant of $L$} and will be denoted by $dL$ as usual.
If $L$ admits an orthogonal basis $\{x_1,x_2,\dots,x_k\}$, then we simply write
$$
L\simeq \langle Q(x_1),Q(x_2),\dots,Q(x_k)\rangle.
$$
For an integer $a$ and a $\z$-lattice $L$, we write $a\ra L$ if $a$ is represented by $L$ and $a\nra L$ otherwise.
We denote by $\gen(L)$ the genus of $L$.
In this article, we abuse the notation when we explicitly write down the isometry classes in $\gen(L)$.
For example, we just write
$$
\gen(L)=\left\{ L,M\right\}
$$
when the class number $h(L)$ of $L$ is two and the genus mate of $L$ is $M$.
We also write $a\ra \gen(L)$ when $a$ is represented by the genus of $L$.
The set of all nonnegative integers represented by $L$ will be denoted by $Q(L)$.
The scale ideal and norm ideal of $L$ is denoted by $\mathfrak{s}(L)$ and $\mathfrak{n}(L)$, respectively.
Throughout the paper, we always assume that every $\z$-lattice is positive definite and  primitive in the sense that $\mathfrak{n}(L)=\z$.
So the scale ideal $\mathfrak{s}(L)$ is $\frac12 \z$ or $\z$.
For an odd prime $p$, we denote by $\Delta_p$ a non-square unit in $\z_p^{\times}$. 
Any unexplained notations and terminologies can be found in \cite{Ki} or \cite{OM}.

\section{Stability}
Let $m$ be a positive integer and let $L$ be a $\z$-lattice.
We define the \textit{Watson transformation of $L$ modulo $m$} to be
$$
\Lambda_m(L)=\{ x\in L : Q(x+z)\equiv Q(z) \Mod m \ \text{for any} \ z\in L \}. 
$$
We denote by $\lambda_m(L)$ the primitive $\z$-lattice obtained from $\Lambda_m(L)$ by scaling the quadratic space $\q \otimes_{\z}L$ by a suitable rational number.

\begin{lem} \label{lemdescend}
Let $p$ be an odd prime.
Let $L$ be a diagonal odd-regular ternary $\z$-lattice such that the unimodular component of $L_p$ is anisotropic.
Then $\lambda_p(L)$ is also a diagonal odd-regular ternary $\z$-lattice. 
\end{lem}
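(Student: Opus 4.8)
The plan is to compare representations by $\lambda_p(L)$ with those by $L$ through the intermediate sublattice $\Lambda_p(L)$, with the anisotropy hypothesis entering only through one local statement at $p$. First I would dispose of the ``formal'' parts of the conclusion. Fixing an orthogonal basis $e_1,e_2,e_3$ of $L$ with $Q(e_i)=a_i$, the congruence defining $\Lambda_p(L)$ amounts, for $x=\sum c_ie_i$, to $B(x,e_j)=c_ja_j\equiv 0\pmod p$ for each $j$; hence $\Lambda_p(L)=\bigoplus_j I_je_j$ with $I_j=p\z$ when $p\nmid a_j$ and $I_j=\z$ when $p\mid a_j$, so $\Lambda_p(L)\simeq\langle b_1,b_2,b_3\rangle$ with each $b_j\in\{a_j,p^2a_j\}$ --- still diagonal, ternary and positive definite. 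Since $L$ is primitive, $\mathfrak n(\Lambda_p(L))=\gcd(b_1,b_2,b_3)\,\z=p^t\z$ with $t\in\{1,2\}$ (one uses that $p$ odd forces some $a_j$ to be a $p$-adic unit, contributing $v_p(b_j)=2$, while $v_p(b_j)\ge 1$ for all $j$ and no other prime divides the gcd). Thus $\lambda_p(L)=p^{-t}\Lambda_p(L)$ --- the lattice $\Lambda_p(L)$ with its quadratic form scaled by $p^{-t}$ --- is a diagonal ternary positive-definite primitive $\z$-lattice, and condition (ii) in the definition of $S_{2,1}$-regularity holds automatically for it, since $\mathfrak n(\lambda_p(L))=\z$ forces some value $Q(x)$ to be odd (so $\lambda_p(L)$, hence $\gen(\lambda_p(L))$, represents an odd positive integer). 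So the whole content of the lemma is condition (i): every odd $n$ with $n\ra\gen(\lambda_p(L))$ satisfies $n\ra\lambda_p(L)$.

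The heart of the argument is a local observation at $p$, and this is the only place the anisotropy is used. I claim: if $p$ is odd and the unimodular Jordan component $\ell_0$ of $L_p$ is anisotropic, then every $v\in L_p$ with $Q(v)\equiv 0\pmod p$ already lies in $\Lambda_p(L)_p$. To prove it, write $L_p=\ell_0\perp\ell'$ with $\ell'$ the sum of the non-unimodular Jordan components (so $Q(\ell')\subseteq p\z_p$), and $v=v_0+v'$ accordingly; then $Q(v')\in p\z_p$ gives $Q(v_0)\equiv 0\pmod p$. An anisotropic unimodular $\z_p$-lattice has rank $1$ or $2$ (rank $3$ being isotropic for $p$ odd), and for such a lattice any vector of norm divisible by $p$ is imprimitive; hence $v_0\in p\ell_0$, and then $B(v,L_p)\subseteq p\z_p$, i.e. $v\in\Lambda_p(L)_p=p\ell_0\perp\ell'$. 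Alongside this I would record the trivial facts $\Lambda_p(L)_q=L_q$ for all $q\ne p$ (from $\mathfrak s(L)=\z$, as $L$ is diagonal and primitive, and $p\in\z_q^\times$) and $\lambda_p(L)_q=p^{-t}\Lambda_p(L)_q$ for all $q$.

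The conclusion then follows quickly. Let $n$ be odd with $n\ra\gen(\lambda_p(L))$; the scaling relation gives $p^tn\ra\gen(\Lambda_p(L))$, and since $\Lambda_p(L)_q\subseteq L_q$ for every $q$ we obtain $p^tn\ra\gen(L)$. As $p$ and $n$ are odd and $t\ge 1$, the integer $p^tn$ is odd, so the odd-regularity of $L$ yields a vector $v\in L$ with $Q(v)=p^tn$. Now $v_p(Q(v))\ge t\ge 1$, so the local claim applied to the image of $v$ in $L_p$ gives $v\in\Lambda_p(L)_p$, while $v\in L_q=\Lambda_p(L)_q$ for $q\ne p$; therefore $v\in\Lambda_p(L)$, whence $p^tn\ra\Lambda_p(L)$ and $n\ra\lambda_p(L)$. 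This is precisely condition (i), so $\lambda_p(L)$ is odd-regular.

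I expect the main obstacle to be the local claim of the second paragraph: one must check that anisotropy of the unimodular component of $L_p$ is exactly the hypothesis making $\{v\in L_p:p\mid Q(v)\}\subseteq\Lambda_p(L)_p$ (this fails for an isotropic unimodular component, e.g. on a hyperbolic plane, which is why the hypothesis cannot be dropped), and one must keep the scaling exponent $t$ tracked precisely, since it is the inequality $t\ge 1$ that guarantees $p^tn$ is still odd when it is handed to the odd-regularity of $L$. The remaining steps are bookkeeping.
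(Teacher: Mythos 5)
Your proof is correct and follows essentially the same route as the paper: lift an odd $n$ represented by $\gen(\lambda_p(L))$ to $p^tn\ra\gen(L)$, invoke the odd-regularity of $L$ (noting $p^tn$ is still odd), and use the anisotropy of the unimodular component to force the representing vector into $\Lambda_p(L)$, then rescale. The only difference is presentational: you treat all Jordan-splitting cases uniformly via the localization $\Lambda_p(L)_p=p\ell_0\perp\ell'$, whereas the paper writes out one representative case ($L=\langle a,b,p^sc\rangle$, $s\ge 2$) in coordinates, where the same divisibility $x\equiv y\equiv 0\Mod p$ appears explicitly.
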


\begin{proof}
Let
$$
L=\langle a,p^rb,p^sc\rangle
$$
such that $(p,abc)=1$ and $0\le r\le s$.
Since the other cases may be treated in a similar manner, we only consider the case when
$r=0$ and $s\ge 2$. Then $\lambda_p(L)\simeq \langle a,b,p^{s-2}c\rangle$. Let $n$ be a nonnegative integer such that
$$
2n+1\ra \gen(\langle a,b,p^{s-2}c\rangle).
$$
Then
$$
p^2(2n+1)\ra \gen(\langle p^2a,p^2b,p^sc\rangle)
$$
and thus
$$
p^2(2n+1)\ra \gen(\langle a,b,p^sc\rangle).
$$
Since $L$ is odd-regular, there is a vector $(x,y,z)\in \z^3$ such that
$$
p^2(2n+1)=ax^2+by^2+p^scz^2.
$$
Since the binary $\z$-lattice $\langle a,b\rangle$ is anisotropic over $\z_p$ by assumption, we have $x\equiv y\equiv 0\Mod p$. Thus
$$
2n+1=a\left(\frac{x}{p}\right)^2+b\left(\frac{y}{p}\right)^2+p^{s-2}cz^2.
$$
This completes the proof.
\end{proof}

Let $L$ be a ternary $\z$-lattice and let $p$ be an odd prime.
We say that $L$ is \textit{$p$-stable} if one of the following holds;
\begin{enumerate} [(i)]
\item $\langle 1,-1\rangle \longrightarrow L_p$;
\item $L_p\simeq \langle 1,-\Delta_p\rangle \perp \langle p\epsilon_p\rangle$ for some $\epsilon_p\in \z_p^{\times}$.
\end{enumerate}
We say that $L$ is \textit{stable} if it is $p$-stable for any odd prime $p$.

Let $L=\langle a,b,c\rangle$ be a diagonal odd-regular ternary $\z$-lattice. 
Then Lemma \ref{lemdescend} says that we may obtain a diagonal stable odd-regular ternary $\z$-lattice $L'=\langle a',b',c'\rangle$ from $L$ by taking $\lambda_q$-transformations several times, if necessary, for suitable odd prime divisors $q$ of the discriminant of $L$.

\begin{lem} \label{lemprime}
Let $L$ be a stable odd-regular ternary $\z$-lattice and let $n$ be a positive odd integer represented by $L_2$.
If $n$ is not represented by $L$, then there is an odd prime divisor $p$ of $n$ such that $L$ is anisotropic over $\z_p$.
\end{lem}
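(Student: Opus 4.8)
The plan is to argue by contradiction using the fact that $L$ is odd-regular, so the only obstruction to representing $n$ must be a global/local obstruction at some odd prime, and stability rules out everything except anisotropy.

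First I would suppose $n$ is odd, represented by $L_2$, and \emph{not} represented by $L$. Since $L$ is odd-regular, $n$ cannot be represented by $\gen(L)$; otherwise odd-regularity would force $n \ra L$. Hence there must be some prime $p$ at which $n$ fails to be represented by $L_p$. Since $n$ is odd and is represented by $L_2$ by hypothesis, this prime $p$ is odd. So it remains to show that, for an odd prime $p$ at which $L$ is $p$-stable, every odd integer is represented by $L_p$ \emph{unless} $L_p$ is anisotropic. Note that a ternary form is anisotropic over $\z_p$ precisely when $L_p \simeq \langle 1, -\Delta_p \rangle \perp \langle p\epsilon_p\rangle$ with $-\epsilon_p$ a nonsquare (equivalently the Hasse invariant is $-1$ and the form is isotropic nowhere); in case (i) of $p$-stability, $\langle 1,-1\rangle \ra L_p$, so $L_p$ is isotropic and represents all of $\z_p$, in particular every unit, so every integer prime to $p$ is represented locally at $p$; the issue is thus confined to case (ii).

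So assume $L_p \simeq \langle 1, -\Delta_p \rangle \perp \langle p\epsilon_p \rangle$. The binary unimodular part $\langle 1, -\Delta_p\rangle$ is a norm form of the unramified quadratic extension and represents exactly the units $u \in \z_p^\times$ together with $p^2 \z_p$-multiples; in fact it represents every unit of $\z_p^\times$ when $p$ is odd (a binary $\z_p$-lattice of discriminant a nonsquare unit represents all units). Therefore, if $v_p(n) = 0$, then $n$ is a unit and is represented already by $\langle 1,-\Delta_p\rangle$, hence by $L_p$. If $v_p(n) \ge 1$, I would split into the case $v_p(n)$ odd versus even: if $v_p(n)$ is odd then one needs the $\langle p\epsilon_p\rangle$ component, and $n/p$ up to a square must be represented by $\langle 1, -\Delta_p, \dots\rangle$ suitably scaled; this is where anisotropy can genuinely obstruct, matching the conclusion. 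More precisely, when $L_p$ is \emph{isotropic} (i.e. $-\epsilon_p \in \z_p^{\times 2}$, so the Hasse symbol is $+1$), $L_p$ represents every $p$-adic integer of the correct parity class, and in particular every odd integer that $L_2$ admits; when $L_p$ is \emph{anisotropic}, $L_p$ represents only finitely many square classes and $n$ may fail — but then we are in the desired conclusion, with this $p \mid n$.

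I expect the main obstacle to be the bookkeeping of square classes represented by $L_p$ in case (ii) as $v_p(n)$ varies, and in particular verifying that when $L_p$ is isotropic it represents \emph{every} unit (not just every square class) so that no odd $n$ with $p \nmid n$ or with $v_p(n)$ even can be missed; this is a standard but slightly fiddly local computation with Jordan splittings over $\z_p$ for odd $p$. A clean way to organize it is: (a) recall that over $\z_p$ ($p$ odd) a form represents $m$ iff it does so over $\q_p$ together with the Jordan-block divisibility constraints; (b) use that an isotropic ternary $\q_p$-space represents all of $\q_p^\times$, so the only constraint is the Jordan-block one, which for a $p$-stable $L_p$ with unimodular part $\langle 1,-\Delta_p\rangle$ is satisfied by every odd $n$ that $L_2$ represents; (c) conclude that failure forces $L_p$ anisotropic, hence the Hasse invariant $-1$ at an odd $p \mid n$. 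This gives the odd prime $p$ as claimed.
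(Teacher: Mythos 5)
Your overall route is the same as the paper's: use odd-regularity to conclude $n \nra \gen(L)$, locate an odd prime $p$ with $n \nra L_p$, and then run through the two cases of $p$-stability; your treatment of case (i) (a unimodular $\langle 1,-1\rangle$ represents all of $\z_p$, so no failure can occur there) and of units in case (ii) (so the failing prime must divide $n$) is correct and is exactly what the paper does. The genuine gap is in the final step, where you must show that the prime you produce satisfies the actual conclusion, namely that $L$ is anisotropic over $\z_p$. Your criterion for isotropy in case (ii) --- that $L_p\simeq\langle 1,-\Delta_p\rangle\perp\langle p\epsilon_p\rangle$ is isotropic when $-\epsilon_p\in\z_p^{\times 2}$ --- is false, and the principle you lean on in steps (a)--(b), that isotropy over $\q_p$ together with ``Jordan-block divisibility constraints'' forces representation over $\z_p$, is also false in general: the lattice $\langle 1,p,-p\rangle$ is isotropic over $\q_p$ and a unit meets no divisibility obstruction, yet it represents no non-residue unit $u$, since any representation forces $x^2\equiv u\Mod p$. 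So your claim that ``when $L_p$ is isotropic it represents every $p$-adic integer of the correct parity class'' is unproved (and, for the lattices you label isotropic, false), and the implication ``failure forces $L_p$ anisotropic'' has not actually been established.

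What closes the gap is a fact you never state and implicitly contradict: a lattice of type (ii) is \emph{always} anisotropic over $\q_p$. Indeed $\langle 1,-\Delta_p\rangle$ is the norm form of the unramified quadratic extension of $\q_p$, so over $\q_p$ it represents exactly the elements of even order; in particular it never represents $-p\epsilon_p$, which has odd order, so $\langle 1,-\Delta_p\rangle\perp\langle p\epsilon_p\rangle$ is anisotropic regardless of $\epsilon_p$. Hence for a stable $L$ the dichotomy is precisely: case (i) holds if and only if $L$ is isotropic at $p$, and then $L_p$ represents all of $\z_p$; case (ii) holds if and only if $L$ is anisotropic at $p$, and then $L_p$ represents all of $\z_p^{\times}$. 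With this one observation your argument is complete and coincides with the paper's proof, which invokes exactly this dichotomy in a single sentence.
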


\begin{proof}
Since $L$ is stable, $L_q$ represents any $\gamma \in \z_q$ ($\gamma \in \z_q^{\times}$) if $q$ is an odd prime over which $L$ is isotropic (anisotropic, respectively).
Assume to the contrary that there is no prime divisor $p$ of $n$ such that $L$ is anisotropic over $\z_p$.
Then $n$ is represented by $L_q$ for any odd prime $q$.
Thus $n$ is an odd positive integer which is locally represented by an odd-regular $\z$-lattice $L$.
It follows that $n$ must be represented by $L$, which is absurd.
This completes the proof.
\end{proof}

\begin{lem} \label{lembound1}
Let $w$ be an odd integer and let $p$ be an odd prime.
Let $L=\langle a,b,c\rangle$ be a diagonal ternary $\z$-lattice which is $p$-stable and anisotropic over $\z_p$.
Then there is an integer $g$ satisfying the followings;
\begin{enumerate} [(i)]
\item $0\le g<p^2$.
\item $8g+w \nra \langle a,b\rangle$ over $\z_p$.
\item $8g+w \ra \langle a,b,c\rangle$ over $\z_p$.
\item $\ord_p(8g+w)\le 1$.
\end{enumerate}
\end{lem}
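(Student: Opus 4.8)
The plan is to use the $p$-stability of $L$ to pin down the local structure of $L_p$, to translate conditions (ii)--(iv) into a single congruence requirement on $8g+w$ modulo $p^2$, and finally to meet that requirement with an admissible $g$, using $\gcd(8,p^2)=1$.

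\emph{Local structure.} Since $L$ is anisotropic over $\z_p$, alternative (i) in the definition of $p$-stability is impossible, as the presence of a hyperbolic plane would make $L\otimes\q_p$ isotropic. Hence $L_p\simeq\langle 1,-\Delta_p\rangle\perp\langle p\epsilon_p\rangle$ for some $\epsilon_p\in\z_p^{\times}$. Comparing this with the Jordan splitting of the diagonal lattice $\langle a,b,c\rangle$, exactly two of $a,b,c$ are units in $\z_p$ and the remaining one has $p$-adic value $1$; relabelling the diagonal entries if necessary, we may assume $\langle a,b\rangle_p\simeq\langle 1,-\Delta_p\rangle$ and $\langle c\rangle_p\simeq\langle p\epsilon_p\rangle$. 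In particular the reduction of $\langle a,b\rangle$ modulo $p$ is an anisotropic binary form over the field with $p$ elements.

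\emph{Representation criteria.} A routine Hensel-lifting argument shows that an anisotropic unimodular binary $\z_p$-lattice represents precisely the elements of $\z_p$ of even $p$-adic value. Consequently, if $t\in\z$ satisfies $\ord_p(t)=1$, then $t$ is not represented by $\langle a,b\rangle$ over $\z_p$, which gives (ii), and (iv) holds as well. For (iii) I would establish the following criterion: for $t=ps$ with $s\in\z_p^{\times}$, one has $t\ra L_p$ if and only if $s\epsilon_p^{-1}$ is a square in $\z_p^{\times}$. The ``if'' direction is immediate from $\langle c\rangle_p\simeq\langle p\epsilon_p\rangle$. For ``only if'', write $t=ax^2+by^2+p\epsilon_pz^2$; reducing modulo $p$ forces $ax^2+by^2\equiv 0\pmod p$, hence $p\mid x$ and $p\mid y$ by the anisotropy noted above, so that $p^2\mid ax^2+by^2$; reading the equation modulo $p^2$ then gives $\epsilon_pz^2\equiv s\pmod p$ with $z$ a unit, i.e.\ $s\epsilon_p^{-1}\in(\z_p^{\times})^2$.

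\emph{Choice of $g$.} Pick an integer $e$ with $p\nmid e$ and $e\equiv\epsilon_p\pmod p$, and set $t_0=pe$. Then $\ord_p(t_0)=1$ and $(t_0/p)\epsilon_p^{-1}\equiv 1\pmod p$ is a square, so by the criterion above $t_0$ is represented by $L_p$, is not represented by $\langle a,b\rangle$ over $\z_p$, and satisfies $\ord_p(t_0)\le 1$. Since $p$ is odd, $\gcd(8,p^2)=1$, so the map $g\mapsto 8g+w$ is a bijection of $\z/p^2\z$; let $g$ be the unique integer with $0\le g<p^2$ and $8g+w\equiv t_0\pmod{p^2}$. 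As $8g+w$ and $t_0$ agree modulo $p^2$, we obtain $\ord_p(8g+w)=1$ and $\big((8g+w)/p\big)\epsilon_p^{-1}\equiv 1\pmod p$, whence $g$ satisfies (i)--(iv). The only point requiring genuine work is the representation criterion above; it reduces entirely to the anisotropy of $\langle a,b\rangle$ modulo $p$ together with Hensel's lemma, so no essential difficulty is anticipated.
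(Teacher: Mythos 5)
Your argument for the case $p\mid c$ is essentially the paper's (the paper takes $8g+w\equiv c\Mod{p^2}$, you take $8g+w\equiv pe\Mod{p^2}$ with $e\equiv\epsilon_p$, which is the same thing up to units), and the local computations there are fine. The gap is the sentence ``relabelling the diagonal entries if necessary, we may assume $\langle a,b\rangle_p\simeq\langle 1,-\Delta_p\rangle$ and $\langle c\rangle_p\simeq\langle p\epsilon_p\rangle$.'' You are not free to relabel: the conclusion of the lemma is not symmetric in $a,b,c$, since conditions (ii) and (iii) single out the specific binary sublattice $\langle a,b\rangle$ spanned by the first two diagonal entries. The hypotheses only say that exactly one of $a,b,c$ has $\ord_p=1$, and that entry may perfectly well be $a$ or $b$; this case actually occurs in the intended application (Lemma \ref{lemboundofc}, where the ordering $a\le b\le c$ is fixed and the whole point is to force the $c$-coordinate to be used, so one needs non-representation by that particular $\langle a,b\rangle$).

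Moreover, in the case $p\mid b$ (say) your construction does not just lack justification, it fails outright: there $\langle b\rangle_p\simeq\langle p\epsilon_p\rangle$, so the element $t_0=pe$ with $e\equiv\epsilon_p\Mod p$ that you select lies in $b(\z_p^{\times})^2$ and is therefore represented by $\langle a,b\rangle$ over $\z_p$, violating (ii). The missing idea is the second case of the paper's proof: when $p\mid ab$, one should instead choose $8g+w$ to be a $p$-adic unit lying in the ``wrong'' square class for the unimodular entry among $a,b$, i.e.\ pick $g$ with $0\le g<p$ and $\left(\frac{8g+w}{p}\right)=-\left(\frac{a}{p}\right)$ (assuming $p\mid b$); such a unit is not represented by $\langle a,b\rangle$ over $\z_p$ because modulo $p$ it would have to be $a$ times a square, while it is represented by the full lattice since the anisotropic unimodular plane $\langle 1,-\Delta_p\rangle$ inside $L_p$ represents every unit, and (i), (iv) are immediate. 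With this second case added, your argument becomes a complete proof along the paper's lines.
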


\begin{proof}
First, assume that $p$ divides $c$. 
Since $\langle a,b\rangle$ is isometric to $\langle 1,-\Delta_p\rangle$ over $\z_p$, any element $\gamma \in \z_p$ with $\ord_p\gamma \equiv 1\Mod 2$ is not represented by $\langle a,b\rangle$ over $\z_p$.
We take an integer $g_1$ with $0\le g_1<p^2$ such that $8g_1+w \equiv c\Mod{p^2}$. 
Clearly, $g_1$ satisfies all of the above conditions. 
Next, assume that $p$ divides $ab$. 
Without loss of generality, we may assume that $p$ divides $b$. 
We take an integer $g_2$ with $0\le g_2<p$ such that 
$$
\left( \displaystyle\frac{8g_2+w}{p} \right)=-\left( \displaystyle\frac{a}{p}\right),
$$
where $( \frac{\cdot}{p})$ is the Legendre symbol. 
Then one may easily check that $g_2$ satisfies all of the conditions.
This completes the proof.
\end{proof}

\begin{lem} \label{lembound2}
For an integer $s$ greater than $1$, let $p_1<p_2<\cdots <p_s$ be odd primes. 
Let $u$ be an integer with $(u,p_1p_2\cdots p_s)=1$ and let $v$ be an arbitrary integer.
Then there is an integer $n$ with $0\le n<(s+2)2^{s-1}$ such that 
$(un+v,p_1p_2\cdots p_s)=1$.
\end{lem}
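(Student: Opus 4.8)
Write $N=(s+2)2^{s-1}$. The plan is to read the statement as a sieve problem and settle it by inclusion--exclusion together with a crude lower bound for an Euler product. Since $(u,p_i)=1$, for each $i$ the congruence $un+v\equiv 0\pmod{p_i}$ determines a unique residue class $a_i$ modulo $p_i$; setting $A_i=\{\,n\in\{0,1,\dots,N-1\}:n\equiv a_i\pmod{p_i}\,\}$, and noting that $n$ lies in some $A_i$ exactly when $p_i\mid un+v$, the conclusion is precisely that $\{0,1,\dots,N-1\}$ is not covered by $A_1\cup\cdots\cup A_s$. So it suffices to show that the number of $n\in\{0,1,\dots,N-1\}$ lying in no $A_i$ is positive.

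By inclusion--exclusion this number equals $\sum_{T\subseteq\{1,\dots,s\}}(-1)^{|T|}\#\bigcap_{i\in T}A_i$, the term for $T=\emptyset$ being $N$. For nonempty $T$ the Chinese Remainder Theorem identifies $\bigcap_{i\in T}A_i$ with the intersection of $\{0,1,\dots,N-1\}$ with a single residue class modulo $P_T:=\prod_{i\in T}p_i$, whose cardinality differs from $N/P_T$ by strictly less than $1$. Collecting these $2^s-1$ rounding errors, the number in question exceeds
$$
N\sum_{T\subseteq\{1,\dots,s\}}\frac{(-1)^{|T|}}{P_T}-(2^s-1)=N\prod_{i=1}^s\Bigl(1-\frac1{p_i}\Bigr)-(2^s-1).
$$
Thus everything reduces to the estimate $\prod_{i=1}^s(1-1/p_i)\ge 2/(s+2)$: granting it, $N\prod_{i=1}^s(1-1/p_i)\ge N\cdot\tfrac{2}{s+2}=2^s$, so the number of admissible $n$ exceeds $2^s-(2^s-1)=1$ and is in particular positive.

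To prove the estimate I would argue as follows. Because $p_1<\cdots<p_s$ are distinct odd integers, $p_i\ge 2i+1$ for every $i$, and since $t\mapsto 1-1/t$ is increasing this gives $\prod_{i=1}^s(1-1/p_i)\ge\prod_{i=1}^s\frac{2i}{2i+1}$; it then remains to show $\prod_{i=1}^s\frac{2i}{2i+1}\ge\frac2{s+2}$, which is an immediate induction on $s$, the inductive step being $\tfrac2{s+1}\cdot\tfrac{2s}{2s+1}\ge\tfrac2{s+2}$, i.e.\ $2s(s+2)\ge(s+1)(2s+1)$. I do not anticipate any real obstacle: the only slightly delicate point is that the accumulated rounding error in inclusion--exclusion is of size $2^s$, which is exactly why the range length must carry the factor $2^{s-1}$, so that $N\prod_{i=1}^s(1-1/p_i)$ can overcome it. (The hypothesis $s>1$ plays no role in this argument; it is simply what is used later in the paper.)
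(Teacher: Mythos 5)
Your argument is correct, and it is genuinely different from what the paper does: the paper gives no argument at all for Lemma \ref{lembound2}, disposing of it in one line as a restatement of \cite[Lemma 3.1]{KO}, which in turn is deduced from \cite[Lemma 3]{KKO}. Your proof replaces that citation with a self-contained sieve computation: inclusion--exclusion over the residue classes $n\equiv -u^{-1}v \pmod{p_i}$ on the interval $[0,(s+2)2^{s-1})$, with each of the $2^s-1$ terms approximated to within $1$, reduces everything to the elementary estimate $\prod_{i=1}^s\bigl(1-\tfrac1{p_i}\bigr)\ge \tfrac2{s+2}$, which you obtain from $p_i\ge 2i+1$ (distinct odd primes) and a one-line induction; I checked the arithmetic of the induction ($2s(s+2)\ge(s+1)(2s+1)$ for $s\ge1$) and the bookkeeping of the rounding errors, and both are sound, so the count of admissible $n$ exceeds $2^s-(2^s-1)$ and is positive. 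What the two approaches buy: the paper's citation keeps the exposition short and matches the constant $(s+2)2^{s-1}$ to the combinatorial argument already in the literature, while your proof makes the lemma independent of \cite{KO} and \cite{KKO}, works for all $s\ge1$ (confirming your remark that $s>1$ is not needed for the statement itself), and makes transparent exactly why the length $(s+2)2^{s-1}$ suffices — the main term $N\prod(1-1/p_i)\ge 2^s$ just clears the accumulated error $2^s-1$; since the Euler product actually decays only like $s^{-1/2}$, it also shows the bound in the lemma is far from tight, though nothing in the paper requires more.
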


\begin{proof}
This is nothing but \cite[Lemma 3.1]{KO} and is a direct consequence of \cite[Lemma 3]{KKO}.
\end{proof}

\begin{lem} \label{lemineq1}
Let $q_k$ denote the $k$-th odd prime so that $\{q_1=3<q_2=5<q_3=7<\cdots \}$ is the set of odd primes.
Let $N\in \n$ and $\delta \in \{0,1,2,3\}$.
Suppose that $w$ is an integer greater than $\text{max}\left( 4,2\delta \right)$ such that $q_{2\delta+1}q_{2\delta+2}\cdots q_{w}> N\cdot \left( (w+1)2^{w+1}\right)^{\delta}$.
If $p_1<p_2<\cdots<p_s$ are odd primes satisfying $p_1p_2\cdots p_s\le N\cdot \left(p_1^2(s+1)2^{s+1}\right)^{\delta}$, then we have $s\le w-1$.
\end{lem}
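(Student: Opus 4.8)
The plan is to argue by contradiction. Suppose $s\ge w$; I will derive that $p_1p_2\cdots p_s$ is strictly larger than $N\bigl(p_1^2(s+1)2^{s+1}\bigr)^\delta$, contradicting the hypothesis. The heart of the argument is the auxiliary inequality
$$
q_{2\delta+1}q_{2\delta+2}\cdots q_t > N\cdot\bigl((t+1)2^{t+1}\bigr)^\delta\qquad\text{for every integer }t\ge w,
$$
which I would prove by induction on $t$; the base case $t=w$ is precisely the hypothesis on $w$. For the inductive step one multiplies the inequality at level $t$ by $q_{t+1}$, so it suffices to verify $q_{t+1}\ge\bigl(\tfrac{2(t+2)}{t+1}\bigr)^\delta$. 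Since $\tfrac{2(t+2)}{t+1}=2+\tfrac{2}{t+1}$ is decreasing in $t$ whereas $q_{t+1}$ is increasing in $t$, this in turn reduces to the single inequality $q_{w+1}\ge\bigl(\tfrac{2(w+2)}{w+1}\bigr)^\delta$.

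Checking that last inequality is the only place the hypotheses $\delta\in\{0,1,2,3\}$ and $w>\max(4,2\delta)$ are used, and it is where the (short) case analysis lives, so I regard it as the main obstacle. For $\delta=0$ it is trivial. For $\delta\in\{1,2\}$ one has $w\ge 5$, hence $\tfrac{2(w+2)}{w+1}\le\tfrac73$ and $q_{w+1}\ge q_6=17>\bigl(\tfrac73\bigr)^2$. For $\delta=3$ one has $w\ge 7$, hence $\tfrac{2(w+2)}{w+1}\le\tfrac94$ and $q_{w+1}\ge q_8=23>\bigl(\tfrac94\bigr)^3$. In every case the inequality holds comfortably, so the induction goes through.

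Granting the auxiliary inequality, the conclusion follows quickly. Assuming $s\ge w$, we have $s>2\delta$, and since $p_1<p_2<\cdots<p_s$ are odd primes we have $p_i\ge q_i$ for all $i$ and $p_1p_2\cdots p_{2\delta}\ge p_1^{2\delta}$. Rewriting the hypothesis on $p_1,\dots,p_s$ and using these facts,
$$
N\bigl((s+1)2^{s+1}\bigr)^\delta\ \ge\ \frac{p_1p_2\cdots p_s}{p_1^{2\delta}}\ \ge\ \frac{p_1p_2\cdots p_s}{p_1p_2\cdots p_{2\delta}}\ =\ p_{2\delta+1}\cdots p_s\ \ge\ q_{2\delta+1}\cdots q_s.
$$
On the other hand, the auxiliary inequality at $t=s$ gives $q_{2\delta+1}\cdots q_s>N\bigl((s+1)2^{s+1}\bigr)^\delta$, which is the desired contradiction. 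Hence $s\le w-1$. Beyond the elementary prime estimate of the second paragraph I anticipate no difficulty; the rest is routine manipulation of products of primes.
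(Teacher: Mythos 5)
Your proof is correct and follows essentially the same route as the paper: assume $s\ge w$, propagate the hypothesis inequality from level $w$ up to level $s$ by comparing $q_{t+1}$ with the ratio $\bigl(\tfrac{2(t+2)}{t+1}\bigr)^{\delta}$, and then contradict the bound on $p_1p_2\cdots p_s$ using $p_i\ge q_i$ and $p_1p_2\cdots p_{2\delta}\ge p_1^{2\delta}$. The only cosmetic difference is that you split the numerical check by cases on $\delta$, while the paper verifies it uniformly via $q_i\ge 17>\bigl(\tfrac{7}{3}\bigr)^{3}$ for $i\ge w+1$.
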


\begin{proof}
Assume to the contrary that $s\ge w$. Since
$$
q_i\ge q_6=17>\left( \dfrac{7}{6}\cdot 2\right)^3\ge \left( \dfrac{i+1}{i}\cdot 2\right)^{\delta}
$$
for any $i\ge w+1$, one may easily deduce from the choice of the integer $w$ that
$$
q_{2\delta+1}q_{2\delta+2}\cdots q_s>N\cdot \left( (s+1)2^{s+1}\right)^{\delta}.
$$
Since $p_i\ge q_i$ for any $i$, we have
\begin{align*}
p_1p_2\cdots p_s&\ge p_1^{2\delta}\cdot p_{2\delta+1}p_{2\delta+2}\cdots p_s \\
&\ge p_1^{2\delta}\cdot q_{2\delta+1}q_{2\delta+2}\cdots q_s \\
&> p_1^{2\delta}\cdot N\cdot \left( (s+1)2^{s+1}\right)^{\delta}.
\end{align*}
This is a contradiction.
\end{proof}

For some $N$ and $\delta$, we compute the smallest positive integer $w$ satisfying the conditions of Lemma \ref{lemineq1} and the results sorted in order of appearance in this article are given in Table \ref{tablew}.

\begin{table} [ht]
\caption{}
\begin{tabular}{|c|c|c|}
\hline
$N$ & $\delta$ & $w$ \\
\hline
1&3&21\\
\hline
1&2&11\\
\hline
257&1&8\\
\hline
$193\cdot 401$&0&6\\
\hline
16&1&6\\
\hline
$419\cdot 443$&1&10\\
\hline
$139\cdot 163\cdot 443$&0&8\\
\hline
$389\cdot 397$&1&10\\
\hline
$157\cdot 173\cdot 541$&0&8\\
\hline
$431\cdot 439$&1&10\\
\hline
$167\cdot 191\cdot 431$&0&8\\
\hline
\end{tabular}
\label{tablew}
\end{table}

\section{Representations of odd primes in an arithmetic progression}
For relatively prime positive integers $u$ and $v$, we define a set $P(u,v)$ to be the set of all odd primes in the arithmetic progression $\{un+v\}_{n\ge 0}$. 
Notice that $v$ may be greater than $u$.
A binary quadratic form $M$ is called \textit{$P(u,v)$-universal} if it represents every element of $P(u,v)$.
As noted in \cite{KW}, one may see that
\begin{align*}
P(u,v)=\begin{cases} P(2u,v) & \text{if}\ u\equiv v\equiv 1\Mod 2, \\
P(2u,u+v) & \text{if}\ u\equiv 1,\ v\equiv 0\Mod 2. \end{cases}
\end{align*}
Thus we consider the $P(u,v)$-universality only when $u$ is even and $v$ is odd.
We fix the following notations throughout this section.
In the following two lemmas, we let $M$ be a positive definite integral primitive binary quadratic form $ax^2+bxy+cy^2$ and put $D=D(M)=b^2-4ac$ so that $D(M)=-4dM$.
The product of all odd prime divisors of $D$ will be denoted by $D^*$.

The proof of the following lemma is an easy modification of the proofs of Theorems 1,2 in \cite{KW} but we provide the proof for completeness.

\begin{lem} \label{lemp1}
Assume that $M$ is $P(u,v)$-universal for some relatively prime positive integers $u,v$ with $u$ even and $v$ odd.
Then the class number $h(M)$ of $M$ is 1 and $u$ is divisible by $D^*$.
Furthermore, $M$ is $P(u,v')$-universal, where $v'$ is the smallest positive integer satisfying $v'\equiv v\Mod {u}$.
\end{lem}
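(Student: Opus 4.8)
The plan is to extract the three asserted conclusions in order: first that $h(M)=1$, then that $D^* \mid u$, and finally the reduction of $v$ modulo $u$. Since $M$ is $P(u,v)$-universal and (by the displayed identities at the start of the section, with $u$ even and $v$ odd) the set $P(u,v)$ is infinite by the Meyer--Halter-Koch result cited in the introduction, $M$ represents infinitely many primes $p$. For such a prime $p$ with $p \nmid 2D$, the congruence $b^2 - 4ac = D$ together with $4ap = (2ax+by)^2 - Dy^2$ shows that $D$ is a square modulo $p$, i.e.\ $\left(\tfrac{D}{p}\right)=1$; equivalently $p$ splits in the order of discriminant $D$, so the prime ideal above $p$ lies in the principal genus and in fact, since $M$ represents $p$, in the principal class. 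The standard correspondence between proper equivalence classes of binary forms of discriminant $D$ and the form class group then shows that a genus of forms all of whose primes are represented by the single form $M$ forces the genus — hence the whole class group — to be trivial; this is exactly the argument of Theorems~1 and~2 of \cite{KW}, which we are told we may imitate. So $h(M)=1$.

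Next, for the divisibility $D^* \mid u$: suppose some odd prime $\ell \mid D$ does not divide $u$. Then as $n$ ranges over residues modulo $\ell$, the values $un+v$ hit every residue class mod $\ell$, and in particular hit quadratic non-residues; by Dirichlet there are infinitely many primes $p \equiv$ (such a non-residue) $\pmod{\ell}$ lying in $P(u,v)$. For such $p$ we have $\left(\tfrac{p}{\ell}\right) = \left(\tfrac{un+v}{\ell}\right) = -1$, but if $M$ represents $p$ then (for $p$ coprime to $2D$, as above) $\left(\tfrac{D}{p}\right)=1$, which by quadratic reciprocity constrains $\left(\tfrac{p}{\ell}\right)$ to the value forced by $\ell \mid D$ being represented — contradicting that $M$ represents \emph{all} of $P(u,v)$. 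More cleanly: the set of primes represented by a form of discriminant $D$ is, up to finitely many exceptions, a union of residue classes modulo $|D|$ (a consequence of $h(M)=1$ and genus theory), so if $M$ is $P(u,v)$-universal then $P(u,v)$ minus a finite set must be contained in such a union; an arithmetic progression mod $u$ is contained mod finitely many exceptions in a union of classes mod $|D|$ only if $u$ is divisible by every odd prime factor of $D$, i.e.\ $D^* \mid u$.

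Finally, the statement that $M$ is $P(u,v')$-universal for the least positive $v' \equiv v \pmod u$ is immediate once one observes that $P(u,v)$ and $P(u,v')$ are \emph{the same set}: the arithmetic progressions $\{un+v : n \ge 0\}$ and $\{un+v' : n \ge 0\}$ differ only by finitely many small terms (those below $v$), and any prime in that finite discrepancy that $M$ must represent is already covered because $v \equiv v' \pmod u$ means the two progressions agree as subsets of the primes once one allows $n$ to be any sufficiently large integer; more precisely $P(u,v') \supseteq P(u,v)$ trivially by shifting $n$, and conversely any prime $\equiv v' \pmod u$ is $\equiv v \pmod u$, hence lies in $P(u,v)$ after replacing $n$ by $n + (v-v')/u$, which is a nonnegative integer for the relevant primes. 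So $P(u,v)$-universality and $P(u,v')$-universality coincide.

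The main obstacle is the first step: correctly running the genus-theoretic argument that $P(u,v)$-universality forces $h(M)=1$, since this is where the interaction between "represents every prime in an arithmetic progression" and "the primes represented by a form are governed by its genus" must be made precise, including the handling of the finitely many primes dividing $2D$ and the case where $M$ is not in the principal class. Once $h(M)=1$ is in hand, the divisibility $D^*\mid u$ and the reduction of $v$ are comparatively routine, the former being a Chebotarev/Dirichlet density comparison and the latter a triviality about arithmetic progressions.
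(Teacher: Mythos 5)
Your argument for the central claim $h(M)=1$ has a genuine gap, and it also aims at the wrong statement. In this paper $h(M)$ is the number of isometry classes in the genus of $M$ (a one-class-genus condition), not the class number of the discriminant $D$: your conclusion that ``the whole class group is trivial'' is false under the hypotheses --- for example $\langle 1,16\rangle$ is $P(8,1)$-universal while the form class group of discriminant $-64$ has order two. More importantly, the chain you offer does not hold together: a split prime need not lie above an ideal in the principal genus, and ``$M$ represents $p$'' places the ideal above $p$ in the class corresponding to $M$, not in the principal class; and nothing in your sketch bridges the actual hypothesis (``$M$ represents every prime in one arithmetic progression'') to the premise you invoke (``every prime represented by the genus is represented by $M$''). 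That bridge is exactly where the key idea lies, and it is the one place where Meyer's theorem is indispensable: if $N\in\gen(M)$ with $N\not\simeq M$, then $N$ represents some integer $\equiv v\Mod u$ (forms in one genus represent the same classes modulo $u$), hence by Meyer's theorem \cite{M} it represents infinitely many primes of $P(u,v)$; such a prime is also represented by $M$ by universality, and two binary forms of equal discriminant representing a common odd prime are isometric, a contradiction. Your appeal to ``the argument of \cite{KW}'' does not supply this step, and Meyer's theorem never actually enters your proof.

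The other two parts also have gaps. For $D^*\mid u$, the reciprocity version fails because $\left(\frac{D}{p}\right)=1$ only constrains the product of symbols over the prime divisors of $D$, not $\left(\frac{p}{\ell}\right)$ at an individual $\ell$; and the ``cleaner'' version rests on the unproved implication that an arithmetic progression modulo $u$ can lie (up to finitely many primes) in a union of classes modulo $|D|$ only if $\ell\mid u$ for each odd $\ell\mid D$ --- false for an arbitrary union of classes, since the union might impose no condition at $\ell$. What is needed is that for every odd $\ell\mid D$ there is a unit class modulo $\ell$ not represented by $M$ over $\z_\ell$ (primitivity gives $M_\ell\simeq\langle\epsilon_1,\epsilon_2\ell^b\rangle$ with $b\ge 1$), so that choosing $n'$ with $un'+v$ in that class and applying Dirichlet produces primes of $P(u,v)$ that $M$ misses even locally; this is the paper's route. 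Finally, $P(u,v)$ and $P(u,v')$ are \emph{not} the same set when $v>u$: $P(u,v')$ also contains the finitely many primes $\equiv v\Mod u$ lying below $v$ (this is precisely why the paper remarks that $v$ may exceed $u$), and your index shift goes the wrong way, since $n-(v-v')/u$ can be negative. Representing those small primes is the real content of the last assertion; it follows from the first two parts, since $D^*\mid u$, $u$ even and $(u,v)=1$ force every prime $q\equiv v\Mod u$ to be coprime to $2D$, so $q$ satisfies the same local (congruence) conditions as the large primes of $P(u,v)$ already represented, hence $q\ra\gen(M)$, and $h(M)=1$ upgrades this to $q\ra M$.
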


\begin{proof}
Suppose that $h(M)>1$. 
Then we can pick a binary $\z$-lattice $N\in \gen(M)$ which is not isometric to $M$. 
Since all primes in $P(u,v)$ are represented by the genus of $M$, it follows that $N$ represents infinitely many primes in $P(u,v)$ by Meyer's theorem \cite{M}.
So there is a prime $p\in P(u,v)$ which is represented by both $M$ and $N$. Since $dM=dN$, one may easily show that this implies that $M$ and $N$ are isometric. 
This contradicts to our choice of $N$ and thus we have $h(M)=1$. 
Assume to the contrary that there is an odd prime divisor $p$ of $D$ such that $(p,u)=1$.
Since $p$ divides $dM$, we may take a positive integer $n'$ with $(p,un'+v)=1$ such that $un'+v \nra M$ over $\z_p$. 
Since $(up,un'+v)=1$, there are infinitely many primes in the arithmetic progression $upn+(un'+v)$ by Dirichlet's theorem on arithmetic progression.
These primes are in $P(u,v)$ and not represented by $M$ which is absurd. 
Thus $u$ is divisible by $D^*$. 
From this, one may easily show that $(q,2D)=1$ for any prime $q\in P(u,v)$.
Since we already have seen that $h(M)=1$, one may easily check that $M$ is $P(u,v')$-universal.
This completes the proof.
\end{proof}

From Lemma \ref{lemp1}, we immediately have

\begin{lem} \label{lemp2}
Let $u$ and $v$ be relatively prime positive integers with $u$ even and $v$ odd.
Assume that $M$ is not $P(u,v)$-universal.
Then there are infinitely many primes in $P(u,v)$ which are not represented by $M$.
\end{lem}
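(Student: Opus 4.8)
The plan is to derive Lemma~\ref{lemp2} directly from Lemma~\ref{lemp1} together with Meyer's theorem. First I would observe that, by the reductions recorded just before Lemma~\ref{lemp1}, there is no loss of generality in the hypothesis that $u$ is even and $v$ is odd, so we are exactly in the situation to which Lemma~\ref{lemp1} applies. The argument is then a contrapositive/dichotomy: if $M$ \emph{were} $P(u,v)$-universal, Lemma~\ref{lemp1} would force $h(M)=1$; so assuming $M$ is not $P(u,v)$-universal we split into two cases according to whether $h(M)=1$ or $h(M)>1$.

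In the case $h(M)=1$, non-universality means concretely that there is at least one prime $q_0\in P(u,v)$ with $q_0\nra M$; since $h(M)=1$ and the genus of $M$ represents $q_0$ (because $\gen(M)$ represents every element of the arithmetic progression class once it represents one — or more simply, since we may instead argue via a local obstruction), I would locate an odd prime divisor $p$ of $D$ with $(p,u)=1$, pick $n'$ with $(p,un'+v)=1$ and $un'+v\nra M$ over $\z_p$, and apply Dirichlet's theorem to the progression $upn+(un'+v)$ exactly as in the proof of Lemma~\ref{lemp1}; this yields infinitely many primes in $P(u,v)$ not represented by $M$. In the case $h(M)>1$, I would take a genus mate $N$ of $M$ not isometric to $M$; by Meyer's theorem $N$ represents infinitely many primes of $P(u,v)$, and for all but finitely many of these, $M$ cannot also represent them (since $dM=dN$ and a common prime representation would force $M\simeq N$), so $M$ fails to represent infinitely many primes of $P(u,v)$.

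Actually, the cleanest route — and the one I would ultimately write — avoids the case split on $h(M)$ and instead reads off both obstructions simultaneously from the \emph{negation} of Lemma~\ref{lemp1}. Lemma~\ref{lemp1} asserts: $M$ is $P(u,v)$-universal $\Rightarrow$ ($h(M)=1$ \emph{and} $D^*\mid u$). Contrapositively, if $M$ is not $P(u,v)$-universal, then $h(M)>1$ or $D^*\nmid u$. If $D^*\nmid u$, run the Dirichlet argument above. If instead $h(M)>1$, run the Meyer/genus-mate argument above. Either way we produce infinitely many primes in $P(u,v)$ missed by $M$, which is the assertion.

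The main obstacle — really the only subtlety — is ensuring that in the $h(M)>1$ branch one genuinely gets \emph{infinitely many} unrepresented primes rather than just one: this is handled by noting that $M$ and $N$ are distinct classes in the same (one- or two-class, but certainly finite) genus with the same discriminant, so the set of integers represented by both $M$ and $N$ is constrained enough that $N$'s infinitely many primes in $P(u,v)$ cannot almost all lie in $Q(M)$; concretely, any prime represented by both would make $M$ and $N$ isometric by the standard binary-form argument used in Lemma~\ref{lemp1}, so in fact \emph{every} prime of $P(u,v)$ represented by $N$ is missed by $M$, and there are infinitely many of these. I do not expect to need any new machinery beyond Lemma~\ref{lemp1}, Meyer's theorem, and Dirichlet's theorem, all already invoked in the excerpt.
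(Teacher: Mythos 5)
Your key reduction does not follow from Lemma~\ref{lemp1}. You write: ``Contrapositively, if $M$ is not $P(u,v)$-universal, then $h(M)>1$ or $D^*\nmid u$.'' That statement is the \emph{inverse} of Lemma~\ref{lemp1}, not its contrapositive, and it is false. Concretely, take $M=\langle 1,1\rangle$, $u=8$, $v=3$: here $h(M)=1$ and $D^*=1$ divides $u$, yet $M$ is not $P(8,3)$-universal (no prime $\equiv 3 \pmod 4$ is a sum of two squares; indeed $\langle 1,1\rangle\notin U(8,3)$ in Table~3). So under the hypothesis of Lemma~\ref{lemp2} you can be in a third case, $h(M)=1$ and $D^*\mid u$ and $M$ not universal, in which both branches of your dichotomy are vacuous. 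Your first-paragraph case split has the same hole: in the $h(M)=1$ branch you assert that $\gen(M)$ represents the missed prime $q_0$ (with $h(M)=1$ this would force $q_0\ra M$, a contradiction; in truth $q_0\nra\gen(M)$ there), and you then propose to ``locate an odd prime divisor $p$ of $D$ with $(p,u)=1$,'' which need not exist --- in the example above $D=-4$ has no odd prime divisor and the obstruction sits at the prime $2$. A smaller but real issue is in the $h(M)>1$ branch: Meyer's theorem applies to the genus mate $N$ only once you know $N$ represents \emph{some} integer $\equiv v\pmod u$; in the proof of Lemma~\ref{lemp1} that came from the universality of $M$, which you no longer have, so you must either invoke the fact that forms in one genus represent the same congruence classes, or treat separately the (trivial) case where $\gen(M)$ misses the whole progression.

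The missing idea that actually closes the remaining case is a congruence (local) argument rather than anything extracted formally from the statement of Lemma~\ref{lemp1}. If $q_0\in P(u,v)$ is missed and $q_0\nra\gen(M)$ (automatic when $h(M)=1$), then the failure is local: either the character $\left(\frac{D}{\cdot}\right)$, a character modulo $|D|$, is $-1$ at $q_0$, or $q_0\nra M_\ell$ for some prime $\ell\mid 2D$, and representability of units over $\z_\ell$ depends only on a residue class modulo a fixed power of $\ell$ dividing $8|D|$. In either case every prime in a sub-progression of $P(u,v)$ determined by congruences satisfied by $q_0$ (compatible by CRT, with invertible residue since $(q_0,u)=1$) is also missed, and Dirichlet's theorem produces infinitely many such primes; when the obstructing prime is an odd $p\mid D$ with $p\nmid u$ this is exactly the Dirichlet argument inside the proof of Lemma~\ref{lemp1}. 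If instead $q_0\ra\gen(M)$ but $q_0\nra M$, then $h(M)>1$ and the Meyer/genus-mate argument (after the fix noted above, and discarding the finitely many primes dividing $2D$ before using ``a common prime forces $M\simeq N$'') yields infinitely many missed primes. The paper presents Lemma~\ref{lemp2} as immediate from Lemma~\ref{lemp1} and records no details, but the derivation it has in mind must cover this genus-misses-$q_0$ case; as written, your proposal never does.
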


Now we introduce a notion which will be used throughout the paper.
For each $\eta \in \{1,3,5,7\}$, we denote by $q_{\eta,i}$ the $i$-th prime in $P(8,\eta)$ so that $P(8,\eta)=\{ q_{\eta ,1}<q_{\eta, 2}<q_{\eta, 3}<\cdots \}$.
For example, $q_{1,1}=17, q_{1,2}=41, q_{1,3}=73, \cdots$.
We also define $U(8,\eta)$ to be the set of (equivalence classes of) all $P(8,\eta)$-universal diagonal binary quadratic forms.
For positive integers $i,j,u,v,w$ and $\eta \in \{1,3,5,7\}$ with restrictions $i\le j$ and $u\le v$, we define
$$
\xi_{\eta}(i,j;w)=\left\vert \left\{ k\in \n : q_{\eta,k}\ra \langle i,j\rangle,\ k\le w \right\} \right\vert
$$
and we put
$$
\psi_{\eta}(u,v;w)=\text{max}\left\{ \xi_{\eta}(i,j;w) : 1\le i\le u,\ i\le j\le v,\ \langle i,j\rangle \notin U(8,\eta) \right\} .
$$

\begin{lem}
Let $h$ be a positive integer and let $\eta \in \{1,3,5,7\}$.
Then there is an integer $s$ such that $\psi_{\eta}(q_{\eta,h+1},q_{\eta,h+2};s)<s-h$.
\end{lem}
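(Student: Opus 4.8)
The plan is to derive the existence of such an $s$ from two soft observations: the family of diagonal binary forms over which the maximum defining $\psi_\eta$ is taken is \emph{finite}, and each form in that family, being non-universal, misses infinitely many primes of $P(8,\eta)$ (or all but at most one of them), so it ``uses up'' an ever-growing number of the first $w$ primes of $P(8,\eta)$ as $w\to\infty$.

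Concretely, after fixing $\eta$ and $h$ I would introduce the set
$$
\mathcal{S}=\bigl\{\langle i,j\rangle : 1\le i\le q_{\eta,h+1},\ i\le j\le q_{\eta,h+2},\ \langle i,j\rangle\notin U(8,\eta)\bigr\},
$$
which is exactly the index set of the maximum defining $\psi_\eta(q_{\eta,h+1},q_{\eta,h+2};w)$; it is finite, and if it is empty the claim is vacuous (any $s>h$ works), so I assume $\mathcal{S}\neq\emptyset$. For a fixed $\langle i,j\rangle\in\mathcal{S}$ I would show that the nondecreasing quantity $w-\xi_\eta(i,j;w)$ --- the number of primes among $q_{\eta,1},\dots,q_{\eta,w}$ that are \emph{not} represented by $\langle i,j\rangle$ --- tends to infinity. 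If $\gcd(i,j)>1$, then $\langle i,j\rangle$ represents a prime only when that prime divides $\gcd(i,j)$, so it represents at most one prime and $\xi_\eta(i,j;w)\le 1$ for all $w$. If $\gcd(i,j)=1$, then $\langle i,j\rangle$ is a primitive binary form which is not $P(8,\eta)$-universal; since $8$ is even, $\eta$ is odd and $\gcd(8,\eta)=1$, Lemma \ref{lemp2} provides infinitely many primes of $P(8,\eta)$ missed by $\langle i,j\rangle$, and letting $w$ grow past the index of the $(h+1)$-st such prime forces $w-\xi_\eta(i,j;w)\ge h+1$. In either case there is an integer $s_{i,j}$ with $\xi_\eta(i,j;w)<w-h$ for all $w\ge s_{i,j}$.

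Finally I would put $s=\max\{s_{i,j}:\langle i,j\rangle\in\mathcal{S}\}$; then $\xi_\eta(i,j;s)<s-h$ for every form in $\mathcal{S}$, hence $\psi_\eta(q_{\eta,h+1},q_{\eta,h+2};s)<s-h$, as desired. There is no genuine obstacle here --- the argument is of compactness type and its only substantive input is Lemma \ref{lemp2} --- but two small points must be respected: one needs the monotonicity of $w\mapsto w-\xi_\eta(i,j;w)$ so that a single $s$ works simultaneously for all of the finitely many forms, and one must dispose of imprimitive $\langle i,j\rangle$ separately because Lemma \ref{lemp2} is stated only for primitive forms. It is also worth noting that the proof is non-effective: it asserts that some $s$ exists without bounding it, so in the applications a concrete $s$ must be exhibited by direct computation.
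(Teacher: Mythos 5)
Your argument is correct and is essentially the paper's own proof: both reduce the claim to the finiteness of the index set defining $\psi_\eta$, apply Lemma \ref{lemp2} to each non-universal form to get $w-\xi_\eta(i,j;w)\to\infty$, and take $s$ to be the maximum of the individual thresholds. Your extra care about imprimitive forms $\langle i,j\rangle$ and about the monotonicity of $w\mapsto w-\xi_\eta(i,j;w)$ are small refinements the paper leaves implicit, not a different route.
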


\begin{proof}
Let $K$ be the set defined by
$$
K=\{ (i,j) : 1\le i\le q_{\eta,h+1},\ i\le j\le q_{\eta,h+2},\ \langle i,j\rangle \notin U(8,\eta) \}.
$$
For any $(i,j)\in K$,
$$
w-\xi_{\eta}(i,j;w) \ra \infty \ \ \text{as}\ \ w \ra \infty
$$
by Lemma \ref{lemp2} and thus there is an integer $s_{ij}$ such that
$$
s_{ij}-\xi_{\eta}(i,j;s_{ij})>h.
$$
If we take 
$$
s=\text{max} \{ s_{ij} : (i,j)\in K \},
$$
then we have
$$
s-\xi_{\eta}(i,j;s)>h
$$
for any $(i,j)\in K$. 
Thus
$$
s-\psi_{\eta}(q_{\eta,h+1}, q_{\eta,h+2}; s)>h
$$
and this completes the proof.
\end{proof}

Given $\eta,u,v$ and $w$, one may easily compute $\psi_{\eta}(u,v;w)$ with the help of computer.
We provide the results for some quadruples $(\eta,u,v,w)$ in Table \ref{tablepsi}.
Again, the quadruples are sorted in order of appearance in this paper.

\begin{table} [ht]
\caption{$\psi_{\eta}(u,v;w)$ for some $\eta,u,v$ and $w$.}
\begin{tabular}{|c|c|}
\hline
$(\eta,u,v,w)$ & $\psi_{\eta}(u,v;w)$ \\
\hline
$(1,1,193,16)$ & 8 \\
\hline
$(1,1,73,5)$ & 2\\
\hline
$(3,139,163,22)$ & 12 \\
\hline
$(3,107,131,20)$ & 12 \\
\hline
$(3,67,83,15)$ & 9 \\
\hline
$(3,59,67,12)$ & 7 \\
\hline
$(5,157,173,26)$ & 16 \\
\hline
$(5,53,61,13)$ & 8 \\
\hline
$(7,167,191,21)$ & 11 \\
\hline
$(7,127,151,17)$ & 9 \\
\hline
$(7,71,79,12)$ & 7 \\
\hline
\end{tabular}
\label{tablepsi}
\end{table}

The following is well-known but we provide the simple proof for completeness.

\begin{prop} \label{proppu}
For each $\eta \in \{1,3,5,7\}$, the set $U(8,\eta)$ of all $P(8,\eta)$-universal diagonal binary quadratic forms is as in Table \ref{tableuni}.
\end{prop}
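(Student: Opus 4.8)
The plan is to determine, for each fixed $\eta\in\{1,3,5,7\}$, exactly which diagonal binary forms $\langle i,j\rangle$ (with $i\le j$) represent every prime in $P(8,\eta)$. By Lemma \ref{lemp1}, a $P(8,\eta)$-universal form $M=\langle i,j\rangle$ must have class number one and must satisfy $D^*\mid 8$, i.e.\ $D(M)=-4ij$ has no odd prime divisor. This forces $ij$ to be a power of $2$, so after using primitivity ($\gcd(i,j)$ squarefree in the relevant sense) the only candidates are $\langle 1,1\rangle,\langle 1,2\rangle,\langle 1,4\rangle,\langle 2,4\rangle$ (and possibly $\langle 1,8\rangle$-type forms, which one checks are imprimitive or reducible to these); in any case the candidate list is short and each candidate has $h=1$, so it is a genuine universality question rather than a genus question. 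This reduces the problem to a finite check: for each of these finitely many forms and each $\eta$, decide whether it represents every prime $\equiv\eta\pmod 8$.

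The key step is a congruence computation. For a diagonal form $\langle 1,j\rangle$ with $j\in\{1,2,4\}$, the primes represented are governed by classical identities: an odd prime $p$ is represented by $x^2+y^2$ iff $p\equiv 1\pmod 4$; by $x^2+2y^2$ iff $p\equiv 1,3\pmod 8$; by $x^2+4y^2$ iff $p\equiv 1\pmod 4$ (since $x^2+4y^2$ represents the same odd numbers as $x^2+y^2$ does when one coordinate is even — more precisely an odd $p=x^2+4y^2$ forces $x$ odd, and conversely $p\equiv 1\pmod 4$ gives $p=a^2+b^2$ with exactly one of $a,b$ even). For $\langle 2,4\rangle = 2x^2+4y^2 = 2(x^2+2y^2)$: this is imprimitive, so it should be excluded by our standing primitivity hypothesis, but if the table lists $2x^2+y^2$-type forms instead one uses that $2x^2+y^2$ represents odd $p$ iff $p\equiv 1,3\pmod 8$ as well. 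Running $\eta$ through $\{1,3,5,7\}$ against each criterion: $P(8,1)$ (residue $1\bmod 8\Rightarrow 1\bmod 4$ and $1\bmod 8$) is covered by $x^2+y^2$, $x^2+2y^2$, $x^2+4y^2$; $P(8,3)$ is covered only by $x^2+2y^2$; $P(8,5)$ is covered by $x^2+y^2$ and $x^2+4y^2$ but not $x^2+2y^2$; $P(8,7)$ is covered by none of them. This yields Table \ref{tableuni} entry by entry.

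The main obstacle — though a mild one — is making sure the candidate list coming out of Lemma \ref{lemp1} is genuinely complete and correctly normalized: one must rule out larger $2$-power discriminants by checking that forms like $\langle 1,8\rangle$, $\langle 2,8\rangle$, $\langle 4,4\rangle$ either fail primitivity, fail $h=1$, or are equivalent to a form already in the list, and one must be careful that "diagonal binary quadratic form" in the non-classic integral sense allows the middle coefficient, so that $\langle i,j\rangle$ really means $ix^2+jy^2$ and $D=-4ij$. Once the finiteness and normalization bookkeeping is pinned down, the rest is the elementary sum-of-two-squares / $x^2+2y^2$ representation theory, which I would simply cite or dispatch in one line each, and the proposition follows by assembling the four cases into the table.
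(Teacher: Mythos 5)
There is a genuine gap in your candidate enumeration, and it would make you prove a \emph{wrong} table for $\eta=1$. From Lemma \ref{lemp1} you correctly get that $D^*$ divides $8$, so $ij$ is a power of $2$; but then primitivity ($\gcd(i,j)=1$ for a diagonal form) forces $i=1$ and $j=2^k$, so the candidates are exactly the forms $\langle 1,2^k\rangle$, $k\ge 0$, and the class-number-one condition of Lemma \ref{lemp1} (class number here meaning the number of classes in the genus, as the paper uses it) truncates the list at $k\le 4$, not at $k\le 2$. Your list $\langle 1,1\rangle,\langle 1,2\rangle,\langle 1,4\rangle,\langle 2,4\rangle$ is wrong on both ends: $\langle 2,4\rangle$ is imprimitive and should never appear, while $\langle 1,8\rangle$ and $\langle 1,16\rangle$ are primitive, have $h=1$, and are \emph{not} ``imprimitive or reducible to these'' --- indeed they are genuinely $P(8,1)$-universal and are listed in Table \ref{tableuni} for $\eta=1$. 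As written, your argument would output $U(8,1)=\{\langle 1,1\rangle,\langle 1,2\rangle,\langle 1,4\rangle\}$, which contradicts the proposition you are trying to prove.

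The missing verifications are elementary but necessary: for a prime $p\equiv 1\pmod 8$, writing $p=c^2+2d^2$ gives $2d^2\equiv 0\pmod 8$, hence $d$ even and $p=c^2+8(d/2)^2$; writing $p=a^2+b^2$ with $b$ even gives $b^2\equiv 0\pmod 8$, hence $4\mid b$ and $p=a^2+16(b/4)^2$. With these two facts added to the classical criteria for $x^2+y^2$, $x^2+2y^2$, $x^2+4y^2$, your case-by-case check over $\eta\in\{1,3,5,7\}$ does recover the table; this final step matches the paper's proof, which likewise reduces via Lemma \ref{lemp1} to the five forms $\langle 1,2^k\rangle$, $0\le k\le 4$, and then checks each one. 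So the route is the intended one, but the enumeration must be repaired and the $k=3,4$ cases actually treated.
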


\begin{table} [ht]
\caption{}
\begin{tabular}{|c|c|}
\hline
$\eta$ & $U(8,\eta)$\\
\hline
1 & $\{ \langle 1,1\rangle$, $\langle 1,2\rangle$, $\langle 1,4\rangle$, $\langle 1,8\rangle$, $\langle 1,16\rangle \}$ \\
\hline
3 & $\{ \langle 1,2\rangle \}$ \\
\hline
5 & $\{ \langle 1,1\rangle$, $\langle 1,4\rangle \}$ \\
\hline
7 & $\emptyset$ \\
\hline
\end{tabular}
\label{tableuni}
\end{table}

\begin{proof}
Let $\eta \in \{1,3,5,7\}$ and let $K$ be a $P(8,\eta)$-universal diagonal binary quadratic form.
By Lemma \ref{lemp1}, the discriminant $dK$ of $K$ is a power of 2.
This implies that $K$ is isometric to $\langle 1,2^k\rangle$ for some nonnegative integer $k$.
Note that the class number $h\left( \langle 1,2^k\rangle \right)$ is 1 if and only if $0\le k\le 4$.
Now the proof of $P(8,\eta)$-universality of each forms in the table is straightforward.
This completes the proof.
\end{proof}

\section{Diagonal stable odd-regular ternary quadratic forms}
In this section, we prove that there are exactly 8 diagonal stable odd-regular ternary quadratic forms which are not regular.
We fix some notations that will be used throughout the section.
We denote by $q_k$ the $k$-th odd prime so that $P:=\{ q_1=3<q_2=5<q_3=7<\cdots \}$ is the set of odd primes.
Clearly, we have $P=P(8,1)\cup P(8,3)\cup P(8,5)\cup P(8,7)$.
Let $L=\langle a,b,c\rangle$ $(a\le b\le c)$ be a diagonal stable odd-regular ternary quadratic form and let $T$ be the set of all odd primes at which $L$ is anisotropic.
It is well known that $T$ is always finite and thus we write
$$
T=\{ p_1<p_2<\cdots<p_t\}
$$
so that $t=|T|$. 
For $\eta =1,3,5,7$, we let $T_{\eta}=T\cap P(8,\eta)$ and denote the cardinality of $T_{\eta}$ by $t_{\eta}$.
Clearly, we have $t_{\eta}\le t$ for any $\eta$.
Since $L$ is primitive, at least one of $a,b$ and $c$ is odd.
It follows that there is an odd integer $\alpha \in \{ 1,3,5,7\}$ such that
$$
8n+\alpha \ra \langle a,b,c\rangle \ \ \text{over} \ \ \z_2 
$$
for any nonnegative integer $n$. 

\begin{lem} \label{lemboundofc}
Under the same notations given above, we have $c<p_1^2(t+1)2^{t+1}$.
\end{lem}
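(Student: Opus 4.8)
The plan is to produce an odd integer of the form $8g+\alpha$ that is ``large but cheap'': large enough that, were it representable by $L$, the size of the third variable would be forced, yet carefully engineered so that it is in fact \emph{not} represented by $L$, contradicting odd-regularity. More precisely, suppose for contradiction that $c\ge p_1^2(t+1)2^{t+1}$. First I would apply Lemma~\ref{lembound1} at each anisotropic prime $p_i\in T$: with $w=\alpha$ fixed, this yields for every $i$ an integer $g_i$ with $0\le g_i<p_i^2$ such that $8g_i+\alpha$ is not represented by $\langle a,b\rangle$ over $\z_{p_i}$, is represented by $\langle a,b,c\rangle$ over $\z_{p_i}$, and has $p_i$-adic order at most $1$. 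By the Chinese Remainder Theorem I can patch these together: choose $g$ in a fixed residue class modulo $p_1^2p_2^2\cdots p_t^2$ so that $g\equiv g_i$ modulo $p_i^2$ for all $i$, and additionally require $g$ small, say $0\le g<p_1^2p_2^2\cdots p_t^2$ --- then the integer $n:=8g+\alpha$ still satisfies conditions (ii)--(iv) of Lemma~\ref{lembound1} simultaneously at every prime of $T$.

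Next I would check that $n$ is represented by $L_q$ for \emph{every} prime $q$. At $q=2$ this is the defining property of $\alpha$; at odd primes $q\notin T$ the lattice $L$ is isotropic and $p$-stable (since $L$ is stable), hence $L_q$ represents every $q$-adic integer, in particular $n$; and at the primes $q=p_i\in T$ condition (iii) gives representability over $\z_{p_i}$. So $n$ is an odd positive integer locally represented by the odd-regular lattice $L$, whence $n\ra L$: there is $(x,y,z)\in\z^3$ with $8g+\alpha=ax^2+by^2+cz^2$. Now I invoke the size estimate: since each $g_i<p_i^2$ and there are $t$ of them, the minimal choice of $g$ is bounded by $p_1^2p_2^2\cdots p_t^2$, and combined with Lemma~\ref{lembound2} (to keep $g$ coprime to the relevant primes while bounding it by roughly $(t+2)2^{t-1}$ times the modulus-free part) one controls $n$ from above; the clean target bound is $n<c$, forcing $z=0$. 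But then $8g+\alpha=ax^2+by^2$ is represented by $\langle a,b\rangle$ over $\z$, hence over $\z_{p_1}$ in particular, contradicting condition (ii) of Lemma~\ref{lembound1} at $p_1$.

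The delicate point --- and the one I expect to be the main obstacle --- is the bookkeeping in the size estimate: Lemma~\ref{lembound1} only guarantees $0\le g_i<p_i^2$, so the CRT solution is a priori of size about $\prod p_i^2$, which is not obviously less than $c$. The resolution is to be more economical: rather than taking the full product of squares, note that conditions (ii) and (iv) at a prime $p_i$ dividing two of $a,b,c$ can be arranged modulo $p_i$ alone (the second case in the proof of Lemma~\ref{lembound1}), so only the primes $p_i\mid c$ genuinely cost a factor $p_i^2$. One then orders $T$ so that $p_1$ is smallest, bounds $\prod_{p_i\mid c}p_i^2$ against the $8$-denominator and the coprimality slack from Lemma~\ref{lembound2}, and the factor $(t+1)2^{t+1}$ emerges precisely as the worst-case product of the ``small'' contributions together with the $2^{t-1}$-type loss. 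The inequality $c<p_1^2(t+1)2^{t+1}$ is exactly what makes $z=0$ unavoidable, so the whole argument is a single well-tuned contradiction.
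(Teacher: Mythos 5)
Your overall skeleton matches the paper's: produce an odd number $8k+\alpha$ that is locally represented by $L$ everywhere but fails to be represented by the binary part $\langle a,b\rangle$ at some anisotropic prime, invoke odd-regularity to get a global representation with $z\neq 0$, and conclude $c\le 8k+\alpha$. The problem is the quantitative step, which you yourself flag as ``the delicate point'' but do not actually resolve. By imposing the full conclusion of Lemma~\ref{lembound1} at \emph{every} prime of $T$ and gluing by CRT, you force $k$ into a prescribed residue class modulo (at least) $p_2p_3\cdots p_t$, so the smallest admissible $k$ is only bounded by a product of the anisotropic primes. That product is not comparable to $p_1^2(t+1)2^{t+1}$: the target bound is exponential in $t$ with no dependence on $p_2,\dots,p_t$, while $\prod_{i\ge 2}p_i$ grows with the sizes of those primes and cannot be absorbed into ``the $8$-denominator and the coprimality slack.'' The claim that the factor $(t+1)2^{t+1}$ ``emerges as the worst-case product of the small contributions'' is unsupported and false in general, so the inequality does not follow from your construction.

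The idea you are missing is that the expensive condition ($8k+\alpha\nra\langle a,b\rangle$ over $\z_{p}$, which is what forces $z\neq 0$) needs to be imposed at only \emph{one} prime, and one takes the smallest prime $p_1$ so that the cost is exactly the single factor $p_1^2$ in the bound. At the remaining anisotropic primes $p_2,\dots,p_t$ you do not need any congruence at all: since $L$ is stable and anisotropic at $p_i$, one has $L_{p_i}\simeq\langle 1,-\Delta_{p_i}\rangle\perp\langle p_i\epsilon_{p_i}\rangle$, which represents every $p_i$-adic \emph{unit}, so it suffices that $8k+\alpha$ be coprime to $p_2\cdots p_t$. That coprimality is exactly what Lemma~\ref{lembound2} delivers cheaply: applied with $u=8p_1^2$, $v=8g+\alpha$ and the $t-1$ primes $p_2<\cdots<p_t$, it produces $h$ with $0\le h<(t+1)2^{t-2}$, uniformly in the sizes of those primes. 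Setting $k=p_1^2h+g$ with the single $g<p_1^2$ from Lemma~\ref{lembound1} at $p_1$ then gives $c\le 8k+\alpha<8p_1^2(t+1)2^{t-2}=p_1^2(t+1)2^{t+1}$, with no contradiction argument needed. Your proposal uses Lemma~\ref{lembound2} only as an afterthought for ``slack'' while still prescribing residues at all $t$ primes, and that is precisely what breaks the bound.
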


\begin{proof}
We first use Lemma \ref{lembound1} when $w=\alpha$ and $p=p_1$ to take an integer $g$ satisfying all the conditions in Lemma \ref{lembound1}. 
Next, use Lemma \ref{lembound2} in the case of $u=8p_1^2$, $v=8g+\alpha$ and odd primes $p_2<p_3<\cdots<p_t$ to obtain an integer $h$ with $0\le h<(t+1)2^{t-2}$ such that $(8p_1^2h+8g+\alpha, p_2p_3\cdots p_t)=1$.
If we put $k=p_1^2h+g$, then from our choice of $g$ and $h$, one may easily see that 
$$
8k+\alpha \nra \langle a,b\rangle \quad \text{and}\quad  8k+\alpha \ra \gen(\langle a,b,c\rangle).
$$ 
From this and the odd-regularity of $\langle a,b,c\rangle$ follows that there exists a vector $(x,y,z)\in \z^3$ with $z\neq 0$ such that
$$
ax^2+by^2+cz^2=8k+\alpha.
$$
So we have $c\le 8k+\alpha$. Since $k\le (p_1^2(t+1)2^{t-2}-1)$ and $\alpha \le 7$, we get 
\begin{align*}
c&\le 8k+\alpha \\
&\le 8(p_1^2(t+1)2^{t-2}-1)+7\\
&< 8p_1^2(t+1)2^{t-2}.
\end{align*}
This completes the proof.
\end{proof}

\begin{lem} \label{lemt20}
Under the same notations given above, we have $t\le 20$.
\end{lem}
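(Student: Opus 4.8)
The goal is to turn the discriminant bound of Lemma \ref{lemboundofc} together with a counting argument on the anisotropic primes into a bound on $t = |T|$. The strategy is to show that the primes of $T$ cannot all be "small" (because their product divides something controlled by $c$, hence by $t$), nor can too many of them be "large". I would proceed as follows. First, recall that every anisotropic prime $p_i \in T$ must divide the discriminant $dL = abc$; more precisely, since $L$ is $p_i$-stable and anisotropic over $\z_{p_i}$, we have $L_{p_i} \simeq \langle 1, -\Delta_{p_i}\rangle \perp \langle p_i \epsilon_{p_i}\rangle$, so exactly one Jordan constituent is scaled by $p_i$ and hence $p_i \| \,\text{(one of } a,b,c)$ — in particular $p_1 p_2 \cdots p_t$ divides $abc$. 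Combined with $a \le b \le c < p_1^2(t+1)2^{t+1}$ from Lemma \ref{lemboundofc}, and the fact that $a,b$ are bounded below by the smallest primes they can carry, one gets an inequality of the shape $p_1 p_2 \cdots p_t \le c \le p_1^2 \cdot (t+1)2^{t+1}$, i.e. precisely the hypothesis $p_1 p_2 \cdots p_s \le N\cdot(p_1^2(s+1)2^{s+1})^{\delta}$ of Lemma \ref{lemineq1} with $N = 1$ and $\delta = 1$ (absorbing the $p_1^2$ factor into the $\delta = 1$ exponent via $p_1 \le p_1^2$, and noting $a b \ge 1$ so it can be dropped).

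Next I would invoke Lemma \ref{lemineq1} with the parameters from Table \ref{tablew}: for $N = 1$, $\delta = 1$ one reads off $w = 8$ — wait, more carefully, one must match the $\delta = 1$ case that actually applies here, which from the table with $N$ a small constant gives the relevant $w$; in fact the crude bound from $abc$ alone is not tight enough, so the refinement is to not merely use that $p_1 \cdots p_t \mid abc$ but to exploit that the unimodular part forces $a, b$ to be coprime to most $p_i$, so that essentially $p_1 \cdots p_t \mid c$ up to at most two exceptional primes dividing $ab$. That gives $p_1 p_2 \cdots p_t \le p_1^2 \cdot c < p_1^4 (t+1)2^{t+1}$, or after pulling out the two smallest primes that might sit in $a$ and $b$, an inequality fitting Lemma \ref{lemineq1} with $\delta = 1$. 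Reading the first $\delta = 1$ row of Table \ref{tablew} — the entry with the constant $257$, giving $w = 8$ — is not quite it either; the honest move is: the bound $c < p_1^2(t+1)2^{t+1}$ together with $p_1 \cdots p_t \mid abc \le c^3$ is too weak, so instead one argues prime-by-prime and lands on the $N=1,\delta=1$ regime. Applying Lemma \ref{lemineq1} then yields $s \le w - 1$ for the appropriate tabulated $w$, and tracing through the constants gives $t \le 20$.

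Concretely, the cleanest route: set $\delta = 1$, and observe $p_1^2 (t+1)2^{t+1} > c \ge c$, while $p_2 p_3 \cdots p_t \mid abc/(\text{stuff})$ forces $p_1 p_2 \cdots p_t \le p_1^2 \cdot c < p_1^2 \cdot p_1^2 (t+1) 2^{t+1}$. Since $p_1 \ge 3$ we may rewrite this (absorbing powers of $p_1$) in the exact form required by Lemma \ref{lemineq1}, and the relevant entry of Table \ref{tablew} with $\delta = 1$ and $N$ the requisite small constant returns $w$ with $w - 1 = 20$; hence $t = s \le 20$. I expect the \textbf{main obstacle} to be bookkeeping: one must verify that at most two of the primes in $T$ can divide $ab$ rather than $c$ (because the unimodular Jordan component $\langle 1, -\Delta_p\rangle$ already uses up two of the three diagonal entries over $\z_p$), so that the product $p_1 \cdots p_t$ genuinely divides $c$ after removing at most $p_1 p_2$; and then one must check that the resulting inequality, after clearing the factor $p_1^2$ coming from Lemma \ref{lemboundofc} and the two removed primes, matches the normalization $p_1 p_2 \cdots p_s \le N (p_1^2(s+1)2^{s+1})^{\delta}$ with the precise $(N,\delta)$ whose row in Table \ref{tablew} has $w = 21$, giving $t \le w - 1 = 20$.
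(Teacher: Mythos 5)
There is a genuine gap here, and it comes from rejecting the step that actually works. After observing (correctly) that each anisotropic prime divides $abc$ exactly once, so that $p_1p_2\cdots p_t\le abc$, the needed move is simply $abc\le c^3<\left(p_1^2(t+1)2^{t+1}\right)^3$ by Lemma \ref{lemboundofc}; this is precisely the hypothesis of Lemma \ref{lemineq1} with $N=1$ and $\delta=3$, and the first row of Table \ref{tablew} ($N=1$, $\delta=3$, $w=21$) then gives $t\le w-1=20$. You dismiss this bound as ``too weak,'' but it is exactly the paper's proof and it suffices; no refinement is required at this stage.

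The refinement you propose in its place does not hold up. First, the claim that at most two primes of $T$ can divide $ab$ (so that $p_1\cdots p_t\le p_1^2c$, or $p_1p_2c$) is unjustified: stability only says each $p_i$ divides exactly one of $a,b,c$ to the first power, and nothing prevents several anisotropic primes from dividing $a$ or $b$ (e.g.\ $a$ or $b$ could itself be a product of several primes of $T$); the later counting of how many $q_{\eta,i}$ divide $abc$ in Lemmas \ref{lemalpha1}--\ref{lemalpha7} is only possible after explicit numerical bounds on $a,b,c$ are in hand, which is not the case here. Second, the bookkeeping with $\delta=1$ cannot be made to match: Lemma \ref{lemineq1} requires the bound in the exact shape $N\cdot\left(p_1^2(s+1)2^{s+1}\right)^{\delta}$ with $N$ a fixed constant, so the extra factor $p_1^2$ (unbounded a priori) cannot be absorbed into $N$; and no $\delta=1$ row of Table \ref{tablew} has $w=21$ (those rows give $w=6,8,10$), while the row with $w=21$ is $N=1$, $\delta=3$ --- the very case you set aside. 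As written, the argument either relies on an unproved divisibility claim or invokes a table entry that does not exist; restoring the $\delta=3$ application repairs it immediately.
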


\begin{proof}
By Lemma \ref{lemboundofc}, we get
$$
p_1p_2\cdots p_t\le abc\le c^3<(p_1^2(t+1)2^{t+1})^3.
$$
By a direct calculation, one may easily check that $w=21$ satisfies the inequality
$$
q_7q_8\cdots q_w>\left( (w+1)2^{w+1}\right)^3.
$$
Now the lemma follows from Lemma \ref{lemineq1}.
\end{proof}

In the following lemma, the function $\psi_{\eta}(u,v;w)$ is the function defined in Section 3.

\begin{lem} \label{lembound3}
Let $h$ be an integer greater than or equal to $t_{\alpha}$. Then we have
\begin{enumerate}[(i)]
\item for any integer $s>h$, at least $s-h$ primes in the set $\{ q_{\alpha,1},q_{\alpha,2},\dots,q_{\alpha,s}\}$ are represented by $\langle a,b,c\rangle$;
\item $a\le q_{\alpha,h+1}$ and $b\le q_{\alpha,h+2}$;
\item if $a=1$, then $b\le q_{\alpha,h+1}$;
\item if $s$ is an integer such that $\psi_{\alpha}(q_{\alpha,h+1},q_{\alpha,h+2};s)<s-h$, then either $\langle a,b\rangle$ is $P(8,\alpha)$-universal or $c\le q_{\alpha,s}.$
\end{enumerate}
\end{lem}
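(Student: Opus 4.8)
The four assertions should all follow from one mechanism: manufacturing odd integers $8k+\alpha$ that are represented by $\gen(\langle a,b,c\rangle)$ but whose only hope of being represented by $L$ itself forces a small value of $a$, $b$, or $c$. The plan is to handle (i) first, then derive (ii), (iii), and (iv) as consequences.

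For (i), I would argue as in Lemma~\ref{lemboundofc}: fix $s>h$ and suppose, for contradiction, that at least $s-(s-h)+1 = h+1$ of the primes $q_{\alpha,1},\dots,q_{\alpha,s}$ are \emph{not} represented by $\langle a,b,c\rangle$. Each such prime $q$ lies in $P(8,\alpha)$, hence is locally represented by $L_2$; by Lemma~\ref{lemprime}, since $q$ is not represented by $L$, there must be an odd prime divisor $p$ of $q$ with $L$ anisotropic over $\z_p$ — but $q$ is prime, so $q=p\in T_\alpha$. Thus each unrepresented prime among $q_{\alpha,1},\dots,q_{\alpha,s}$ belongs to $T_\alpha$, which has cardinality $t_\alpha\le h$. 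This contradicts having $h+1$ of them, proving (i).

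For (ii): apply (i) with $s=h+1$ to get that at least one of $q_{\alpha,1},\dots,q_{\alpha,h+1}$ is represented by $\langle a,b,c\rangle$; since $a\le b\le c$ and a diagonal ternary form represents no integer smaller than its least coefficient, $a$ cannot exceed the largest such representable prime, so $a\le q_{\alpha,h+1}$. For the bound on $b$, apply (i) with $s=h+2$: at least two of $q_{\alpha,1},\dots,q_{\alpha,h+2}$ are represented, and a diagonal ternary $\langle a,b,c\rangle$ can represent at most one integer strictly below $b$ (namely through $ax^2$ alone, forcing that integer to be $a$ times a square, and among primes only $a$ itself when $a$ is prime — more carefully, any represented integer $<b$ must have $y=z=0$, so equals $ax^2$, and two distinct such primes would both be $a$, impossible); hence the second-smallest represented prime is $\ge b$, giving $b\le q_{\alpha,h+2}$. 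For (iii), if $a=1$ then $\langle a,b\rangle=\langle 1,b\rangle$ already represents $q_{\alpha,k}$ whenever $q_{\alpha,k}$ is a sum $x^2+by^2$; the point is that with $a=1$, any prime $<b$ represented by the ternary form is a perfect square, so is not prime unless it fails to exist — thus the smallest represented prime among $q_{\alpha,1},\dots,q_{\alpha,h+1}$ (which exists by (i) with $s=h+1$) is already $\ge b$, forcing $b\le q_{\alpha,h+1}$.

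For (iv), suppose $s$ satisfies $\psi_{\alpha}(q_{\alpha,h+1},q_{\alpha,h+2};s)<s-h$ and suppose $\langle a,b\rangle$ is not $P(8,\alpha)$-universal. By (ii), $(a,b)$ is one of the pairs over which the maximum defining $\psi_\alpha$ is taken, so $\xi_{\alpha}(a,b;s)\le \psi_{\alpha}(q_{\alpha,h+1},q_{\alpha,h+2};s)<s-h$; that is, fewer than $s-h$ of $q_{\alpha,1},\dots,q_{\alpha,s}$ are represented by the binary form $\langle a,b\rangle$. But by (i), at least $s-h$ of these primes are represented by the ternary form $\langle a,b,c\rangle$; hence at least one prime $q_{\alpha,k}$ with $k\le s$ is represented by $\langle a,b,c\rangle$ but not by $\langle a,b\rangle$, so in the representation $q_{\alpha,k}=ax^2+by^2+cz^2$ we must have $z\neq 0$, whence $c\le q_{\alpha,k}\le q_{\alpha,s}$. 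This gives the dichotomy in (iv).

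The main obstacle is the bookkeeping in (ii) and (iii) about how many primes a diagonal ternary form can represent below a given coefficient: one must check carefully that "a diagonal ternary form $\langle a,b,c\rangle$ represents at most one prime strictly smaller than $b$" (it must come from $ax^2$, so is a prime of the form $a x^2$, which pins it down) and the analogous cleaner statement when $a=1$. Everything else is a direct recycling of Lemma~\ref{lemprime} together with the definitions of $\xi_\alpha$ and $\psi_\alpha$.
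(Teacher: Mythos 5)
Your proposal is correct and follows essentially the same route as the paper: Lemma \ref{lemprime} forces every unrepresented prime of $P(8,\alpha)$ into $T_\alpha$ (giving (i)), size comparison of the represented primes with the coefficients gives (ii) and (iii) via the observation that a prime below $b$ would have to be of the form $ax^2$, and (iv) follows from comparing $\xi_\alpha(a,b;s)\le\psi_\alpha(q_{\alpha,h+1},q_{\alpha,h+2};s)$ with the count from (i). The only differences are cosmetic (you argue (i) and (iv) directly/contrapositively where the paper argues by contradiction), so there is nothing to fix.
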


\begin{proof}
(i) Let $s$ be an integer greater than $h$. Note that 
$$
q_{\alpha,i}\ra \langle a,b,c\rangle \ \ \text{over}\ \ \z_2
$$
for any $i=1,2,\dots,s$ by our choice of $\alpha$. If
$$
q_{\alpha,i}\nra \langle a,b,c\rangle,
$$
for some $i$ with $1\le i\le s$, then we have $q_{\alpha,i}\in T_{\alpha}$ by Lemma \ref{lemprime}.
Thus $\langle a,b,c\rangle$ represents at least $s-t_{\alpha}$ primes in the set $\left\{ q_{\alpha,i} : 1\le i\le s\right\}$. Since $h\ge t_{\alpha}$, we have the assertion.

\noindent(ii) From (i) with $s=h+1$, it follows that there is an integer $i$ with $1\le i\le h+1$ such that $q_{\alpha,i} \ra \langle a,b,c\rangle$. 
Thus we have $a\le q_{\alpha,i}\le q_{\alpha,h+1}$ since we are assuming $a\le b\le c$ throughout this section.
Again from (i) with $s=h+2$, we see that there are two distinct integers $j$ and $k$ with $1\le j<k\le h+2$ such that both $q_{\alpha,j}$ and $q_{\alpha,k}$ are represented by $\langle a,b,c\rangle$.
Assume to the contrary that $b>q_{\alpha,h+2}$. Then two distinct primes $q_{\alpha,j}$ and $q_{\alpha,k}$ must be represented by the unary quadratic form $\langle a\rangle$, which is absurd.
Thus we have $b\le q_{\alpha,h+2}$.

\noindent(iii) As in the proof of (ii), we have
$$
q_{\alpha,i}\ra \langle a,b,c\rangle
$$
for some $i$ with $1\le i\le h+1$.
If $b>q_{\alpha,h+1}$, then we have $q_{\alpha,i}\ra \langle 1\rangle$, which is absurd. Thus $b\le q_{\alpha,h+1}$.

\noindent(iv)
Let $s$ be an integer such that
\begin{equation}\label{eqpsi}
\psi_{\alpha}(q_{\alpha,h+1},q_{\alpha,h+2};s)<s-h.
\end{equation}
Assume to the contrary that $\langle a,b\rangle$ is not $P(8,\alpha)$-universal and $c>q_{\alpha,s}$.
If we define a set $V$ by
$$
V=\left\{ q_{\alpha,i} : q_{\alpha,i}\ra \langle a,b,c\rangle , 1\le i\le s \right\},
$$
then we have $\vert V\vert \ge s-h$ by (i).
Since $c>q_{\alpha,s}$, every prime in $V$ must be represented by $\langle a,b\rangle$.
It follows from this and (ii) that
$$
\psi_{\alpha}(q_{\alpha,h+1},q_{\alpha,h+2};s)\ge s-h
$$
which contradicts to Equation \eqref{eqpsi}. This completes the proof.
\end{proof}

Note that the following 4 lemmas follow from repeated application of Lemma \ref{lemineq1} and Lemmas \ref{lemboundofc}-\ref{lembound3}.

\begin{lem} \label{lemalpha1}
Assume that 1 is represented by $L=\langle a,b,c\rangle$ over $\z_2$. 
Then we have 
$$
(a,b,c)\in \{ (1,4,5), (1,2,24), (1,6,8), (1,5,12), (1,4,21) \}
$$
or $L$ is regular.
\end{lem}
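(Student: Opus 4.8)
The plan is to exploit the fact that the hypothesis $1 \ra L$ over $\z_2$ forces $\alpha = 1$, so we may work with the prime progression $P(8,1)$ and the data from Tables~\ref{tablew} and~\ref{tablepsi} corresponding to $\eta = 1$. First I would combine Lemma~\ref{lemt20} (which gives $t \le 20$) with the observation that only the primes in $T_1 = T \cap P(8,1)$ matter for the $\alpha$-specific bounds: since $q_{1,1} = 17$ is already large, and $t_1 \le t$, I expect a quick pigeonhole/counting argument via Lemma~\ref{lemineq1} (with $N$ and $\delta$ drawn from the rows of Table~\ref{tablew} marked for this part of the paper, e.g.\ $N = 257$, $\delta = 1$ or $N = 193\cdot 401$, $\delta = 0$) to collapse $t_1$ to a very small value, say $t_1 \le 1$. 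This is the step where all the arithmetic-progression machinery of Section~3 pays off.

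Next I would run Lemma~\ref{lembound3} with $h = t_1$ (or $h = 1$, an upper bound for $t_1$). Part~(iii) of that lemma gives $a = 1 \Rightarrow b \le q_{1,h+1}$; combined with part~(ii) we get explicit small bounds on $a$ and $b$. Using the entry $(1,1,73,5) \mapsto \psi_1 = 2$ from Table~\ref{tablepsi}, the hypothesis $\psi_1(q_{1,h+1}, q_{1,h+2}; s) < s - h$ of part~(iv) is satisfied for a concrete small $s$ (with $h=1$, $s=5$ gives $2 < 4$), so part~(iv) yields that either $\langle a,b\rangle$ is $P(8,1)$-universal or $c \le q_{1,s}$. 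By Proposition~\ref{proppu}, $\langle a,b\rangle \in U(8,1)$ means $\langle a,b\rangle \in \{\langle 1,1\rangle, \langle 1,2\rangle, \langle 1,4\rangle, \langle 1,8\rangle, \langle 1,16\rangle\}$; in that case the ternary form is $\langle 1,2^k,c\rangle$ with $k\le 4$, and a separate (easier) finite search on $c$ handles this sub-case. In the complementary case we have a fully explicit finite box: $a \in \{1\}$ (or a short list), $b$ bounded by $q_{1,h+2}$, and $c \le q_{1,s}$, together with $b \le c$.

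The final step is the finite enumeration: for each triple $(a,b,c)$ in the box, one checks whether $\langle a,b,c\rangle$ is stable, whether it is odd-regular (equivalently, by Lemma~\ref{lemprime}, whether it represents every odd integer not excluded by anisotropy at the finitely many bad primes), and whether it is regular. Forms that are regular are absorbed into the ``or $L$ is regular'' clause (they are already in the Jones--Pall list), and the genuinely new odd-regular-but-not-regular survivors are exactly the five listed triples $(1,4,5)$, $(1,2,24)$, $(1,6,8)$, $(1,5,12)$, $(1,4,21)$. This enumeration is a finite computation once the box is pinned down, so it is ``routine'' in the sense of the paper, though it does require care to verify $p$-stability and to rule out each non-candidate; I would organize it by first discarding non-stable forms, then using local conditions at the anisotropic primes to test odd-regularity.

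The main obstacle I anticipate is getting the counting in the first step tight enough: one must choose the parameters $N$, $\delta$, $w$ (and the auxiliary $h$, $s$) so that Lemma~\ref{lemineq1} forces $t_1$ down to a value small enough that the resulting search box is finite and actually manageable, while making sure the bound $c < p_1^2(t+1)2^{t+1}$ from Lemma~\ref{lemboundofc} is compatible with the hypothesis $p_1 p_2 \cdots p_s \le N\cdot(p_1^2(s+1)2^{s+1})^\delta$ of Lemma~\ref{lemineq1}. In other words, the delicate part is the bookkeeping that converts the abstract finiteness into the specific five-element list, rather than any single hard inequality; the individual estimates are all consequences of the lemmas already proved.
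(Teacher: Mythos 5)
Your overall architecture matches the paper's (bootstrap $t$ via Lemma \ref{lemboundofc} and Lemma \ref{lemineq1}, bound $a,b$ via Lemma \ref{lembound3}, bound $c$ via the $\psi$-values, then a finite computer search), but two steps as you describe them would fail. First, your collapse of $t_1$: Lemma \ref{lemineq1} only bounds the total number $t$ of anisotropic primes, and even after the full bootstrap ($t\le 10$, then $b\le 257$, then $t\le 7$, then $b\le 193$, then $c\le 401$ via $\psi_1(1,193;16)=8$) the best it gives is $t\le 5$. The reduction of $t_{1}$ itself comes from a different mechanism: once $b\le 113$ and $c\le 401$ are known, the inequality $q_{1,1}q_{1,2}=17\cdot 41>401$ shows at most one prime of $P(8,1)$ divides $b$ and at most one divides $c$, whence $t_1\le 2$ --- not $t_1\le 1$. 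Your choice $h=1$ in Lemma \ref{lembound3} is only legitimate if $h\ge t_{\alpha}$, which you have not established and which can fail at this stage (nothing yet excludes $b$ and $c$ each carrying a prime $\equiv 1\Mod 8$). With the correct $h=2$ one still gets $b\le q_{1,3}=73$ and, using $\psi_1(1,73;5)=2<5-2$, $c\le q_{1,5}=97$, so the non-universal case survives, but your bookkeeping as written is off by exactly the step that needs proof.

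Second, and more seriously, the sub-case where $\langle 1,b\rangle$ is $P(8,1)$-universal, i.e.\ $b\in\{1,2,4,8,16\}$, is not ``a separate (easier) finite search on $c$'': in that case Lemma \ref{lembound3}(iv) gives no bound on $c$ at all, since every prime in $P(8,1)$ is already represented by the binary section, so the entire arithmetic-progression machinery is silent and the search box is infinite as you have set it up. The paper closes this with a genuinely different idea: after getting $t\le 5$ (Lemma \ref{lemineq1} with $N=16$, $\delta=1$), it exhibits six pairwise coprime integers congruent to $1$ modulo $8$ that are not represented by $\langle 1,b\rangle$ (the sets $E=\{201,553,649,817,1457,1633\}$ for $b\in\{1,4,16\}$ and $F=\{305,553,689,1073,1457,1633\}$ for $b\in\{2,8\}$); if $c>1633$ none of them is represented by $\langle 1,b,c\rangle$, so by Lemma \ref{lemprime} each must have a prime divisor in $T$, forcing $t\ge 6$, a contradiction. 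This yields $c\le 1633$ and only then a finite check, which is where the candidates $(1,4,5)$, $(1,2,24)$, $(1,4,21)$ actually arise. Without an argument of this kind your proposal cannot produce those three triples, so the missing idea is essential, not routine.
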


\begin{proof}
Since $L$ is stable and $1\ra L_2$, we have $1\ra \gen(L)$.
Now the odd-regularity of $L$ forces that $1\ra L$ and thus we have $a=1$.

First, we assume that $b\not\in \{ 1,2,4,8,16\}$ so that $\langle 1,b\rangle$ is not $P(8,1)$-universal by Proposition \ref{proppu}.
By Lemma \ref{lemboundofc}, we have
$$
p_1p_2\cdots p_t\le abc<(p_1^2(t+1)2^{t+1})^2.
$$
Now Lemma \ref{lemineq1} with $w=11$ implies $t\le 10$. 
Note that $t_1\le t\le 10$.
By Lemma \ref{lembound3}(iii) with $h=10$, we have
$$
b\le q_{1,11}=257.
$$
Again by Lemma \ref{lemboundofc}, we get
$$
p_1p_2\cdots p_t\le abc<257\cdot (p_1^2(t+1)2^{t+1})
$$
and from this and Lemma \ref{lemineq1} with $w=8$ follows that $t\le 7$. 
By Lemma \ref{lembound3}(iii) with $h=7$, we have $b\le q_{1,8}=193$.
Since $t\le 7$, it follows from Lemma \ref{lembound3}(i) that $L$ represents at least 9 primes in the set $\{ q_{1,i} : 1\le i\le 16\}$.
However, one may easily check that
$$
\psi_1(1,193;16)=8,
$$
which means that
$$
\left\vert \left\{ k : q_{1,k}\ra \langle 1,j\rangle, 1\le k\le 16\right\} \right\vert \le 8
$$
for any $j$ in the set 
$$
J:=\{ 1\le j\le 193 : j\neq 1,2,4,8,16\}.
$$
Since $b\in J$ and $L$ must represent at least 9 primes in the set $\{q_{1,i}:1\le 1\le 16\}$, we have $c\le q_{1,16}=401$.
Now since
$$
p_1p_2\dots p_t\le abc\le 193\cdot 401,
$$
we have $t\le 5$ by Lemma \ref{lemineq1}.
It follows from Lemma \ref{lembound3}(iii) that $b\le q_{1,6}=113$.
Since $q_{1,1}\cdot q_{1,2}=17\cdot 41>401$, we have
$$
\left\vert \left\{ i\in \n : q_{1,i}\vert b \right\} \right\vert \le 1\ \ \text{and}\ \ \left\vert \left\{ i\in \n : q_{1,i}\vert c \right\} \right\vert \le 1.
$$
From this follows that $\left\vert \left\{ i\in \n : q_{1,i}\vert abc \right\} \right\vert \le 2$, and this implies $t_1\le 2$.
So we have $b\le q_{1,3}=73$ by Lemma \ref{lembound3}(iii), and $L$ must represent at least 3 primes in the set $\{ q_{1,i} : 1\le i\le 5\}$ by Lemma \ref{lembound3}(i).
However, one may easily check that
$$
\psi_1(1,73;5)=2
$$
and thus we have $c\le q_{1,5}=97$.
Now with the help of computer, one may check that $(b,c)\in \{ (6,8),(5,12)\}$ or $\langle 1,b,c\rangle$ is regular.

Next, we assume that $b\in \{ 1,2,4,8,16\}$.
From Lemma \ref{lemboundofc} follows that
$$
p_1p_2\cdots p_t\le abc<16\cdot (p_1^2(t+1)2^{t+1}).
$$
By Lemma \ref{lemineq1}, we have $t\le 5$. We assert that $c\le 1633$. Assume to the contrary that $c>1633$. We deal with the case of $b\in \{ 1,4,16\}$ first.

\noindent(i) $b\in \{ 1,4,16\}$. \\
Let $E=\{ 201,553,649,817,1457,1633 \}$.
Note that any element $e$ in the set $E$ is congruent to 1 modulo 8 and is not represented by $\langle 1,b\rangle$. Since $c>1633$, any element of $E$ is not represented by $\langle 1,b,c\rangle$.
Since $\langle 1,b,c\rangle$ is stable odd-regular, there must be an odd prime $p\in T$ such that $p$ divides $e$ by Lemma \ref{lemprime}. 
Since the elements of $E$ are mutually coprime, there should be at least six odd primes in $T$, which is absurd. \\
\noindent(ii) $b\in \{ 2,8\}$. \\
Let $F=\{ 305,553,689,1073,1457,1633\}$. Similarly to the case of $b\in \{ 1,4,16\}$, one may easily deduce a contradiction by considering the set $F$ in place of $E$. \\
Thus we proved the assertion that $c\le 1633$. 
Now with the help of computer, one may easily check that $(b,c)\in \{ (4,5), (2,24), (4,21)\}$ or $\langle 1,b,c\rangle$ is regular.
This completes the proof.
\end{proof}

\begin{lem} \label{lemalpha3}
Assume that, over $\z_2$, $L=\langle a,b,c\rangle$ does not represent 1 and does represent 3. 
Then we have $(a,b,c)\in \{ (3,4,7), (5,6,8) \}$ or $L$ is regular.
\end{lem}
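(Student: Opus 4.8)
The plan is to rerun, now with $\alpha = 3$, the bootstrapping argument used in the proof of Lemma~\ref{lemalpha1}. The only structural difference is that for $\alpha = 3$ there is a single case to handle: since $a=1$ would give $1 = 1^2 \ra L$ and hence $1 \ra L_2$, contrary to hypothesis, we have $a \ge 2$, so in particular $\langle a,b\rangle \neq \langle 1,2\rangle$; by Proposition~\ref{proppu} this means $\langle a,b\rangle$ is not $P(8,3)$-universal. Thus the ``non-universal'' branch of Lemma~\ref{lemalpha1} is the only one that occurs, and part~(iv) of Lemma~\ref{lembound3} will always conclude with a bound on $c$ rather than with $P(8,3)$-universality of $\langle a,b\rangle$.

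The first few rounds of the iteration follow the template of Lemma~\ref{lemalpha1}, reading the constants from Tables~\ref{tablew} and~\ref{tablepsi}. From $t \le 20$ (Lemma~\ref{lemt20}) and Lemma~\ref{lembound3}(ii) with $h=20$ we get $a \le q_{3,21} = 419$ and $b \le q_{3,22} = 443$; plugging $abc < 419\cdot 443\cdot p_1^2(t+1)2^{t+1}$ into Lemma~\ref{lemineq1} with the row $(419\cdot 443,1,10)$ of Table~\ref{tablew} gives $t \le 9$. Then $h=9$ in Lemma~\ref{lembound3}(ii) gives $a \le q_{3,10} = 139$, $b \le q_{3,11} = 163$, and since $\psi_3(139,163;22) = 12 < 22-9$ part~(iv) gives $c \le q_{3,22} = 443$; so $abc \le 139\cdot 163\cdot 443$ and Lemma~\ref{lemineq1} with the row $(139\cdot 163\cdot 443, 0, 8)$ gives $t \le 7$. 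One more round: $h = 7$ gives $a \le q_{3,8} = 107$, $b \le q_{3,9} = 131$, and $\psi_3(107,131;20) = 12 < 20-7$ gives $c \le q_{3,20} = 379$.

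To keep the iteration productive beyond this point I would use the following elementary observation. Every prime of $T_3 = T \cap P(8,3)$ divides $abc$, and any $t_3$ distinct primes congruent to $3 \pmod 8$ have product at least the product of the smallest ones, namely $3, 11, 19, 43, 59, 67, \dots$; hence a bound $abc \le ABC$ yields a bound on $t_3$. Concretely, $abc < 3\cdot 11\cdot 19\cdot 43\cdot 59\cdot 67$ forces $t_3 \le 5$, and $abc < 3\cdot 11\cdot 19\cdot 43\cdot 59$ forces $t_3 \le 4$. Since $107\cdot 131\cdot 379$ satisfies the first inequality, $t_3 \le 5$, so $h = 5$ is admissible: Lemma~\ref{lembound3}(ii) gives $a \le q_{3,6} = 67$, $b \le q_{3,7} = 83$, and $\psi_3(67,83;15) = 9 < 15-5$ gives $c \le q_{3,15} = 251$. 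Now $67\cdot 83\cdot 251$ satisfies the second inequality, so $t_3 \le 4$, hence $h = 4$ gives $a \le q_{3,5} = 59$, $b \le q_{3,6} = 67$, and $\psi_3(59,67;12) = 7 < 12-4$ gives $c \le q_{3,12} = 179$. At this stage $a \le 59$, $b \le 67$, $c \le 179$, and I would finish with a computer search over these finitely many triples: every diagonal stable primitive ternary $\z$-lattice $\langle a,b,c\rangle$ in this box that represents $8n+3$ but not $1$ over $\z_2$ and is odd-regular is either regular (hence among the forms of \cite{JP}) or equal to $\langle 3,4,7\rangle$ or $\langle 5,6,8\rangle$.

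The main obstacle is the middle step: the crude estimate $p_1\cdots p_t \le abc \le c^3$ stops improving around $t \le 7$, so the $\psi$-bounds of Lemma~\ref{lembound3}(iv) are essential to shrink $c$, and it is the ``smallest primes congruent to $3 \bmod 8$'' observation that then repeatedly lowers the admissible value of $h$ (equivalently of $t_3$) until the box $[1,59]\times[1,67]\times[1,179]$ is reached. Once there, the search is routine; the only form-by-form work is checking that every surviving candidate other than the two listed lattices is regular, which is settled by the list in \cite{JP}.
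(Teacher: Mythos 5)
Your proposal is correct and follows essentially the same bootstrapping route as the paper's proof: the identical chain $t\le 20 \Rightarrow (a,b)\le(419,443) \Rightarrow t\le 9 \Rightarrow (139,163,443) \Rightarrow t\le 7 \Rightarrow (107,131,379) \Rightarrow t_3\le 5 \Rightarrow (67,83,251) \Rightarrow t_3\le 4 \Rightarrow (59,67,179)$, using the same entries of Tables \ref{tablew} and \ref{tablepsi} and the same observation that $(a,b)\neq(1,2)$ rules out $P(8,3)$-universality, finished by the same computer search. The only cosmetic difference is how $t_3\le 5$ and $t_3\le 4$ are obtained: you compare $abc$ with the product of the smallest primes congruent to $3$ modulo $8$, while the paper counts the prime divisors of $a,b,c$ separately via $q_{3,1}q_{3,2}q_{3,3}>379$ and $q_{3,2}q_{3,3}>131$; both yield the same conclusion.
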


\begin{proof}
We have $t\le 20$ by Lemma \ref{lemt20}.
So Lemma \ref{lembound3}(ii) implies that
$$
a\le q_{3,21}=419\ \ \text{and}\ \  b\le q_{3,22}=443.
$$
Now Lemma \ref{lemboundofc} implies
$$
p_1p_2\cdots p_t\le abc<419\cdot 443\cdot (p_1^2(t+1)2^{t+1}).
$$
Lemma \ref{lemineq1} implies that $t\le 9$.
Thus Lemma \ref{lembound3}(ii) implies
$$
a\le q_{3,10}=139,\ b\le q_{3,11}=163
$$
and
Lemma \ref{lembound3}(i) implies that $L$ represents at least 13 primes in the set 
$$
\{q_{3,i} : 1\le i\le 22\}.
$$
One may easily check that
$$
\psi_3(139,163;22)=12.
$$
Note that $(a,b)\not\eq (1,2)$ by the assumption that
$$
1\nra \langle a,b,c\rangle \ \ \text{over} \ \ \z_2.
$$
Thus $\langle a,b\rangle$ is not $P(8,3)$-universal and thus we may use Lemma \ref{lembound3}(iv) with $h=9$ and $s=22$ to get $c\le q_{3,22}=443$.
From this follows that
$$
p_1p_2\cdots p_t\le abc\le 139\cdot 163\cdot 443.
$$
Lemma \ref{lemineq1} implies $t\le 7$. 
Now Lemma \ref{lembound3}(ii) implies
$$
a\le 107,\ b\le 131
$$
and
Lemma \ref{lembound3}(i) implies that $L$ represents at least 13 primes in the set 
$$
\{ q_{3,i} : 1\le i\le 20\}.
$$
Since
$$
\psi_3(107,131;20)=12\ \ \text{and}\ \  (a,b)\not\eq (1,2),
$$
we have $c\le q_{3,20}=379$ by Lemma \ref{lembound3}(iv).
Note that $q_{3,1}q_{3,2}q_{3,3}=3\cdot 11\cdot 19>379$ and $q_{3,2}q_{3,3}>131$. 
From this and the stability of $\langle a,b,c\rangle$, one may easily deduce that $\vert\{ i : q_{3,i}\vert abc\}\vert \le 5$.
Thus $t_3\le 5$ and now Lemma \ref{lembound3}(ii) implies that
$$
a\le 67,\ b\le 83
$$
and Lemma \ref{lembound3}(i) implies that $L$ represents at least 10 primes in the set 
$$
\{ q_{3,i} : 1\le i\le 15\}.
$$ 
One may easily show that
$$
\psi_3(67,83;15)=9.
$$
Since $(a,b)\not\eq (1,2)$, we have $c\le 251$ by Lemma \ref{lembound3}(iv).
Similarly with above, one may easily deduce that
$$
\left\vert \left\{ i : q_{3,i}\vert abc\right\} \right\vert \le 4.
$$ 
Now Lemma \ref{lembound3}(ii) implies
$$
a\le 59,\ b\le 67
$$
and Lemma \ref{lembound3}(i) implies that $L$ represents at least 8 primes in the set 
$$
\{ q_{3,i} : 1\le i\le 12\}.
$$
Since
$$
\psi_3(59,67;12)=7\ \ \text{and}\ \ (a,b)\not\eq (1,2),
$$
we have $c\le 179$ by Lemma \ref{lembound3}(iv).
Now with the help of computer, one may easily check that $(a,b,c)\in \{ (3,4,7), (5,6,8)\}$ or $L$ is regular. 
This completes the proof.
\end{proof}

Since the proofs of the following two lemmas are quite similar to the proof of Lemma \ref{lemalpha3}, we omit what lemma we use in the proofs.

\begin{lem} \label{lemalpha5}
Assume that, over $\z_2$, $L=\langle a,b,c\rangle$ does not represent 1 and 3 and does represent 5. 
We have $(a,b,c)=(2,5,24)$ or $L$ is regular.
\end{lem}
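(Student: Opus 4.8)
The plan is to follow the proof of Lemma~\ref{lemalpha3} almost word for word, with the residue class $3$ everywhere replaced by $5$; the one preliminary point is that here the hypothesis $(a,b)\neq(1,2)$ used there is replaced by the observation that $a\ge 2$. Indeed, if $a=1$ then $\langle 1\rangle$ is an orthogonal summand of $L$, so $1\ra L_2$, contradicting the assumption; and by Proposition~\ref{proppu} every binary form in $U(8,5)$ has leading coefficient $1$, so $\langle a,b\rangle$ is never $P(8,5)$-universal. Thus the non-universality hypothesis needed to apply Lemma~\ref{lembound3}(iv) holds at every stage, just as $(a,b)\neq(1,2)$ was used for $\alpha=3$. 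Note also that the standing hypothesis forces $\alpha=5$, that is, $8n+5\ra L_2$ for all $n\ge 0$.

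I would then run the same iterated descent on $t=|T|$ as in Lemma~\ref{lemalpha3}: repeatedly combine Lemma~\ref{lemboundofc} with Lemma~\ref{lemineq1} (using the rows $N=389\cdot 397$, $\delta=1$ and $N=157\cdot 173\cdot 541$, $\delta=0$ of Table~\ref{tablew}) to shrink $t$; use Lemma~\ref{lembound3}(ii) to bound $a$ and $b$ in terms of the current value of $t$ (or of $t_5$); use Lemma~\ref{lembound3}(i),(iv) together with the tabulated values $\psi_5(157,173;26)=16$ and $\psi_5(53,61;13)=8$ from Table~\ref{tablepsi} to bound $c$; and interleave a prime-counting step, in which the stability of $L$ --- so that no odd prime divides more than one of $a,b,c$, and $\ord_p(abc)=1$ for every $p\in T_5$ --- together with the bounds already obtained (and the constraints on residues modulo $8$ coming from $\alpha=5$) is used to cut $t_5$ down. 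Starting from $t\le 20$ (Lemma~\ref{lemt20}), this chain should terminate with the explicit bounds $2\le a\le q_{5,5}=53$, $a\le b\le q_{5,6}=61$, $b\le c\le q_{5,13}=197$, just as the corresponding chain for $\alpha=3$ terminated with $a\le 59$, $b\le 67$, $c\le 179$.

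It then remains to examine the finitely many triples $(a,b,c)$ in this box: discarding those that are imprimitive, not stable, or not of the prescribed $2$-adic type, one checks by computer that every remaining non-regular form except $\langle 2,5,24\rangle$ represents some odd integer everywhere locally but not globally, hence is not odd-regular. I expect the delicate points to be the prime-counting step --- where the counting bound must be pushed down far enough (below what stability alone would give) for the tabulated values of $\psi_5$ to become applicable --- and the organization of the final finite verification, which involves a sizeable box of candidates; apart from these the argument is a direct transcription of the $\alpha=3$ case and introduces no new ideas.
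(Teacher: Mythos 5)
Your proposal is correct and follows essentially the same route as the paper: the paper's proof is exactly the $\alpha=3$ argument transcribed to $\alpha=5$, using the non-universality of $\langle a,b\rangle$ (equivalent to your observation that $1\nra L_2$ forces $a\ge 2$ while $U(8,5)=\{\langle 1,1\rangle,\langle 1,4\rangle\}$), the same table entries $\psi_5(157,173;26)=16$ and $\psi_5(53,61;13)=8$, and a prime-counting step giving $t_5\le 4$, terminating in the same box $a\le 53$, $b\le 61$, $c\le 197$ followed by a computer check. The only detail left implicit in your sketch (the reduction to $t_5\le 4$ via $5\cdot 13\cdot 29>541$ and $13\cdot 29>149$) is carried out in the paper exactly as you describe.
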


\begin{proof}
Since $t\le 20$, we have
$$
a\le q_{5,21}=389\ \ \text{and}\ \ b\le q_{5,22}=397.
$$
This implies that
$$
p_1p_2\cdots p_t\le abc<389\cdot 397\cdot (p_1^2(t+1)2^{t+1}).
$$
So we have $t\le 9$ and it follows that $a\le 157$ and $b\le 173$.
Since
$$
\psi_5(157,173;26)=16\ \ \text{and}\ \ (a,b)\not\in \{ (1,1),(1,4)\},
$$
we have $c\le 541$. It follows that
$$
p_1p_2\cdots p_t\le abc\le 157\cdot 173\cdot 541
$$
and thus $t\le 7$.
This implies that $a\le 109$ and $b\le 149$.
Note that 
$$
q_{5,1}q_{5,2}q_{5,3}=5\cdot 13\cdot 29>541\ \ \text{and}\ \ q_{5,2}q_{5,3}>149.
$$
From this, one may deduce that 
$$
\left\vert \left\{ i : q_{5,i}\vert abc\right\} \right\vert \le 4
$$
and thus we have $t_5\le 4$.
Now $a\le 53$, $b\le 61$ and $L$ represents at least 9 primes in the set $\{q_{5,i} : 1\le i\le 13\}$.
It follows from
$$
\psi_5(53,61;13)=8\ \ \text{and}\ \ (a,b)\not\in \{ (1,1),(1,4)\},
$$
that $c\le q_{5,13}=197$. 
With the help of computer, one may easily show that $(a,b,c)=(2,5,24)$ or $L$ is regular. 
This completes the proof.
\end{proof}

\begin{lem} \label{lemalpha7}
Assume that 7 is represented by $L=\langle a,b,c\rangle$ over $\z_2$. 
Then at least one element in the set $\{1,3,5\}$ is also represented by $L$ over $\z_2$ or $L$ is regular.
\end{lem}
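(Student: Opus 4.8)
The plan is to reduce to the case where none of $1,3,5$ is represented by $L$ over $\z_2$ and then run the bounding machinery of Lemmas \ref{lemboundofc}--\ref{lembound3} with $\eta=7$, exactly as in the proofs of Lemmas \ref{lemalpha3} and \ref{lemalpha5}. So I would assume $1\nra L_2$, $3\nra L_2$ and $5\nra L_2$; since $7\ra L_2$ whereas $1,3,5\nra L_2$, the integer $\alpha$ fixed at the beginning of this section must equal $7$ (one has $8n+7\ra L_2$ for every $n\ge 0$ by scaling a $2$-adic representation of $7$ by a suitable unit of $\z_2$). The goal is then to show that $L$ is regular. A convenient feature of the case $\alpha=7$ is that $U(8,7)=\emptyset$ by Proposition \ref{proppu}, so $\langle a,b\rangle$ is never $P(8,7)$-universal and part (iv) of Lemma \ref{lembound3} always delivers a bound of the form $c\le q_{7,s}$.

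I would then run the iteration. By Lemma \ref{lemt20} we have $t\le 20$, so Lemma \ref{lembound3}(ii) gives $a\le q_{7,21}=431$ and $b\le q_{7,22}=439$. Hence $p_1\cdots p_t\le abc<431\cdot 439\cdot p_1^2(t+1)2^{t+1}$ by Lemma \ref{lemboundofc}, and Lemma \ref{lemineq1} with the row $(431\cdot 439,\,1,\,10)$ of Table \ref{tablew} forces $t\le 9$. Then $a\le q_{7,10}=167$ and $b\le q_{7,11}=191$, and since $\psi_7(167,191;21)=11<12$ (Table \ref{tablepsi}), Lemma \ref{lembound3}(iv) gives $c\le q_{7,21}=431$. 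Now $p_1\cdots p_t\le abc\le 167\cdot 191\cdot 431$, so the row $(167\cdot 191\cdot 431,\,0,\,8)$ of Table \ref{tablew} together with Lemma \ref{lemineq1} gives $t\le 7$; consequently $a\le q_{7,8}=127$, $b\le q_{7,9}=151$, and from $\psi_7(127,151;17)=9<10$ and Lemma \ref{lembound3}(iv) we obtain $c\le q_{7,17}=311$.

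Next I would exploit stability to cut $t_7$ down. Since $L$ is $p$-stable for every odd prime $p$, no odd prime divides more than one of $a,b,c$: if an odd prime $p$ divided two of them (it cannot divide all three, as $L$ is primitive), the unimodular Jordan component of $L_p$ would have rank $1$, so $L_p$ could neither contain $\langle 1,-1\rangle$ nor be isometric to $\langle 1,-\Delta_p\rangle\perp\langle p\epsilon_p\rangle$, contradicting $p$-stability. Hence the number of primes of $P(8,7)$ dividing $abc$ equals the sum of the numbers dividing $a$, $b$ and $c$ separately. Because $q_{7,1}q_{7,2}=7\cdot 23=161$ exceeds both $a\le 127$ and $b\le 151$, while $q_{7,1}q_{7,2}q_{7,3}=7\cdot 23\cdot 31=4991$ exceeds $c\le 311$, that sum is at most $1+1+2=4$; since every prime of $T_7$ divides $dL=abc$, we conclude $t_7\le 4$. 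Then Lemma \ref{lembound3}(ii) with $h=4$ gives $a\le q_{7,5}=71$ and $b\le q_{7,6}=79$, and since $\psi_7(71,79;12)=7<8$ (Table \ref{tablepsi}), Lemma \ref{lembound3}(iv) gives $c\le q_{7,12}=199$.

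Finally, with $1\le a\le b\le c$, $a\le 71$, $b\le 79$ and $c\le 199$, only finitely many $\langle a,b,c\rangle$ remain, and I would verify with a computer that every primitive stable odd-regular diagonal ternary $\z$-lattice in this range with $\alpha=7$ is in fact regular, which establishes the lemma (and shows, together with Lemmas \ref{lemalpha1}, \ref{lemalpha3}, \ref{lemalpha5}, that every non-regular diagonal stable odd-regular ternary form satisfies $\alpha\in\{1,3,5\}$). The only step that carries real content beyond a mechanical transcription of the earlier arguments with the $\eta=7$ data of Tables \ref{tablew} and \ref{tablepsi} is the stability count giving $t_7\le 4$; together with the routine final enumeration this is where the work lies, but I do not expect a genuine obstacle.
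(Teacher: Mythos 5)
Your proposal is correct and follows essentially the same route as the paper: the identical iteration through Lemmas \ref{lemboundofc}--\ref{lembound3} with $\eta=7$ and the same table values ($t\le 9$, $c\le 431$, $t\le 7$, $c\le 311$, $t_7\le 4$ via $7\cdot 23>151$ and $7\cdot 23\cdot 31>311$, then $a\le 71$, $b\le 79$, $c\le 199$), ending with a finite computer verification. The only cosmetic difference is the endgame: you first assume $1,3,5\nra L_2$ and check that all surviving candidates are regular, while the paper runs the search without that reduction and finds that the only non-regular survivors are the $8$ triples of Lemmas \ref{lemalpha1}--\ref{lemalpha5}, each of which represents $1$, $3$ or $5$ over $\z_2$ -- so your final enumeration must indeed be understood as excluding those (equivalently, allowing them as exceptions), which your standing assumption does.
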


\begin{proof}
Since $t\le 20$, we have
$$
a\le q_{7,21}=431\ \ \text{and}\ \ b\le q_{7,22}=439.
$$
It follows that
$$
p_1p_2\cdots p_t\le abc<431\cdot 439\cdot (p_1^2(t+1)2^{t+1})
$$
and thus we have $t\le 9$.
Now $a\le 167$, $b\le 191$ and $L$ represents at least 12 primes in the set 
$$
\{ q_{7,i} : 1\le i \le 21\}.
$$ 
Since there is no $P(8,7)$-universal diagonal binary quadratic form and
$$
\psi_7(167,191;21)=11,
$$
we have $c\le 431$. 
From this follows that
$$
p_1p_2\cdots p_t\le abc\le 167\cdot 191\cdot 431
$$
and thus we have $t\le 7$.
Now $a\le 127$, $b\le 151$ and $L$ represents at least 10 primes in the set 
$$
\{ q_{7,i} : 1\le i\le 17\}.
$$
Since
$$
\psi_7(127,151;17)=9,
$$
we have $c\le q_{7,17}=311$.
From
$$
q_{7,1}q_{7,2}=7\cdot 23>151\ \ \text{and}\ \ q_{7,1}q_{7,2}q_{7,3}=7\cdot 23\cdot 31>311
$$
follows that $t_7\le 4$.
Now we have $a\le 71$, $b\le 79$ and $L$ represents at least 8 primes in the set
$$
\{ q_{7,i} : 1\le i\le 12\}.
$$
Since
$$
\psi_7(71,79;12)=7,
$$
we have $c\le 199$. 
Now with the help of computer, one may show that either $(a,b,c)$ is one of the above 8 triples appeared in Lemmas \ref{lemalpha1}-\ref{lemalpha5} or $L$ is regular.
This completes the proof.
\end{proof}

By Lemmas \ref{lemalpha1}-\ref{lemalpha7}, there are only 8 candidates for diagonal stable odd-regular ternary quadratic forms which are not regular. 
We prove that all of them are indeed odd-regular.
Before that, we need some preparation concerning representations of an arithmetic progression by a ternary quadratic form.
The following approach was introduced in \cite{O1} and some modifications have been made in \cite{O3} and \cite{JOS}. 
Let $l$ be a positive integer and $r$ be a nonnegative integer such that $r<l$. Let $N$ and $K$ be ternary $\z$-lattices and we put
$$
R(N,l,r)=\left\{ v\in \left( \z /d\z \right)^3 : v^tM_Nv\equiv r\Mod l \right\}
$$
and
$$
R(K,N,l)=\left\{ T\in M_3(\z) : T^tM_KT=l^2M_N\right\},
$$
where $M_N$ and $M_K$ are the matrix presentation of $N$ and $K$, respectively.
We say that an element $v\in R(N,l,r)$ is \textit{good} with respect to $K,N,l$ and $r$ if there is a matrix $T\in R(K,N,l)$ such that $\frac1d Tv\in \z^3$.
We put
$$
R_K(N,l,r)=\left\{ v\in R(N,l,r) : v\ \ \text{is good}\right\}.
$$
If $R(N,l,r)=R_K(N,l,r)$, then we write
$$
N\prec_{l,r}K.
$$
We here introduce two results in \cite{O1} and \cite{JOS} for the convenience of the reader.

\begin{lem} \label{lembad1}
Under the same notations given above, assume that $N\prec_{l,r}K$. Then we have
$$
S_{l,r}\cap Q(N)\subset Q(K).
$$
\end{lem}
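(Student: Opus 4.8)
The plan is to unwind the definitions of $R(N,l,r)$, $R_K(N,l,r)$, and $\prec_{l,r}$, and then show that any integer in $S_{l,r}\cap Q(N)$ lifts along a suitable $T\in R(K,N,l)$ to a representation by $K$. Suppose $m\in S_{l,r}\cap Q(N)$, so $m\equiv r\Mod l$ and $m=u^tM_Nu$ for some $u\in\z^3$. The idea is that $u$, reduced modulo $d=dN$, gives an element $\bar u\in R(N,l,r)$; since $N\prec_{l,r}K$ means $R(N,l,r)=R_K(N,l,r)$, this $\bar u$ is good, so there is $T\in R(K,N,l)$ with $\frac1d T\bar u\in\z^3$, hence $\frac1d Tu\in\z^3$ as well (because $Tu\equiv T\bar u\Mod d$ and $\frac1dT\bar u$ being integral means $T\bar u\equiv 0\Mod d$, so $Tu\equiv 0\Mod d$).

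Having produced an integral vector $w:=\frac1d Tu\in\z^3$, the key computation is to evaluate $w^tM_Kw$. Using $T^tM_KT=l^2M_N$ from the definition of $R(K,N,l)$, we get
$$
w^tM_Kw=\frac1{d^2}(Tu)^tM_K(Tu)=\frac1{d^2}u^t(T^tM_KT)u=\frac{l^2}{d^2}u^tM_Nu=\frac{l^2}{d^2}m.
$$
For this to show $m\in Q(K)$ directly one would want $\frac{l^2}{d^2}m$ to equal $m$, which it does not in general; so the actual argument must be subtler — presumably one works with a vector scaled differently, or the relevant statement in \cite{O1} uses $\frac1d Tv$ landing in $\z^3$ to conclude that a correspondingly scaled lattice vector represents $m$ after accounting for the index. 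I would therefore revisit the precise normalization: the matrix $T$ sends $N$ (scaled by $l^2$) into $K$, so $T$ maps $\q\otimes N$ to $\q\otimes K$ multiplying the quadratic form by $l^2$, and the condition $\frac1dTv\in\z^3$ is exactly what guarantees that the image vector, after dividing by $d$, is a genuine lattice vector of $K$ whose value is $\frac{l^2}{d^2}$ times the original. The cleanest route is to observe that $l^2/d^2$ is built so that $m$ and $\frac{l^2}{d^2}m$ differ by a factor that is a square of a rational, and representation by a positive definite form is insensitive to this only when the factor is $1$; hence the correct reading is that $N\prec_{l,r}K$ packages the Watson-transformation relationship between $K$ and $N$ (with $l$ a prime power dividing the relevant discriminants) so that $\frac{l^2}{d^2}=1$ in the cases of interest, or that $Q(N)$ here really means the set of values and $m$ itself lands in $Q(K)$ after the scaling is absorbed into the lattice $N$ being a rescaling of a sublattice of $K$.

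The main obstacle, then, is not the algebraic manipulation but getting the bookkeeping of the scaling factors exactly right: tracking how $d=dN$, the factor $l^2$, and the normalization of $N$ versus $K$ interact, so that the final vector $w$ lies in $K$ and satisfies $w^tM_Kw=m$ rather than a rational multiple of $m$. I would handle this by carefully citing the setup of \cite{O1} — in that framework $N$ is obtained from $K$ by a Watson-type transformation, and $R(K,N,l)$ consists precisely of the transition matrices realizing $l^2M_N$ inside $M_K$, so that the integrality condition $\frac1dTv\in\z^3$ is the exact criterion for the preimage representation to descend. Once that correspondence is stated correctly, the proof is the short chain above: take $u$ with $u^tM_Nu=m$, reduce mod $d$, use $N\prec_{l,r}K$ to find a good $T$, form $w=\frac1dTu\in\z^3$, and compute $w^tM_Kw=m$ via $T^tM_KT=l^2M_N$ together with the normalization identity $d^2=l^2$ (equivalently, that the transformation $T$ scales the form by the square of the same factor by which $N$ rescales the quadratic space). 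This yields $m\in Q(K)$, and since $m\in S_{l,r}\cap Q(N)$ was arbitrary, $S_{l,r}\cap Q(N)\subset Q(K)$.
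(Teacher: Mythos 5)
Your route -- lift a representation $u$ of $m$ by $N$ through a good transition matrix $T$ and compute the value of the image vector -- is exactly the standard argument; the paper itself gives no in-house proof but simply cites \cite[Theorem 2.3]{O1}, whose proof is this computation. However, your write-up does not actually close: you end by observing that your scaled vector represents $\frac{l^2}{d^2}m$ rather than $m$, and then speculate about which normalization could repair this (at one point invoking a ``normalization identity $d^2=l^2$'' as though it were a general fact relating the discriminant $dN$ to the modulus $l$, which it is not). That unresolved factor is a genuine gap in the proof as written.

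The resolution is purely notational: the symbol $d$ appearing in the definitions of $R(N,l,r)$ and of ``good'' is a holdover from the notation of \cite{O1}, where the modulus itself is called $d$; it denotes the modulus $l$, not the discriminant $dN$. This is confirmed by Lemma \ref{lembad2} in the same section, where the corresponding integrality conditions read $\frac1l T_i v\in\z^3$. With this reading your chain closes with no leftover factor: take $u\in\z^3$ with $u^tM_Nu=m$ and $m\equiv r\Mod l$, so $u\Mod l\in R(N,l,r)=R_K(N,l,r)$; choose $T\in R(K,N,l)$ with $\frac1l T\bar u\in\z^3$ for the reduction $\bar u$, whence $Tu\equiv T\bar u\equiv 0\Mod l$ and $w:=\frac1l Tu\in\z^3$; then
$$
w^tM_Kw=\frac1{l^2}\,u^t\bigl(T^tM_KT\bigr)u=\frac1{l^2}\,u^t\bigl(l^2M_N\bigr)u=m,
$$
so $m\in Q(K)$. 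So your approach is correct and is the intended one; you only needed to recognize that the division is by $l$, not by $dN$, rather than leaving the scaling question open.
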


\begin{proof}
See  \cite[Theorem 2.3]{O1}.
\end{proof}

The following lemma is \cite[Theorem 2.3]{JOS} and will be used in Section 5. For a vector $v\in \z^3$, we denote by $v\Mod l$ the image of $v$ under the natural homomorphism $\z^3\to \left(\z /l\z\right)^3$.

\begin{lem} \label{lembad2}
Under the same notations given above, assume that there is a partition $R(N,l,r)-R_K(N,l,r)=(P_1\cup \cdots \cup P_k)\cup (\widetilde{P_1}\cup \cdots \cup \widetilde{P_{k'}})$ satisfying the following properties: for each $P_i$, there is a $T_i\in M_3(\z)$ such that
\begin{enumerate}[(i)]
\item $\frac1lT_i$ is of infinite order;
\item $T_i^tM_NT_i=l^2M_N$;
\item for any vector $v\in \z^3$ such that $v\Mod l \in P_i$, 
$$
\frac1l T_iv\in \z^3\ \ \text{and}\ \  \frac1lT_iv\Mod d \in P_i\cup R_K(N,l,r)
$$
\end{enumerate}
and for each $\widetilde{P_j}$, there is a $\widetilde{T_j}\in M_3(\z)$ such that
\begin{enumerate} [(i)]
\item $\widetilde{T_j}^tM_N\widetilde{T_j}=l^2M_N$;
\item for any vector $v\in \z^3$ such that $v\Mod l \in \widetilde{P_j}$,
$$
\frac1l\widetilde{T_j}v\in \z^3 \ \ \text{and}\ \ \frac1l\widetilde{T_j}v\Mod l \in P_1\cup \cdots \cup P_k\cup R_K(N,l,r).
$$
\end{enumerate}
Then we have
$$
(S_{l,r}\cap Q(N))-\cup_{i=1}^kg(z_i)\mathcal{S}\subset Q(K),
$$
where the vector $z_i$ is any integral primitive eigenvector of $T_i$, $\mathcal{S}$ is the set of squares of integers, and $g(z_i)\mathcal{S}=\{ g(z_i)n^2: n\in \z\}$.
\end{lem}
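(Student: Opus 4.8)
The plan is to mimic the proof of Lemma~\ref{lembad1} (i.e.\ \cite[Theorem~2.3]{O1}) and upgrade it by a descent argument that iteratively applies the matrices $T_i$ and $\widetilde{T_j}$ to push a would-be ``bad'' representation vector into the good set, unless it gets trapped forever in one of the $P_i$'s, in which case it must lie along an eigendirection of $T_i$ and hence the represented integer lies in $g(z_i)\mathcal{S}$. Concretely, fix $m\in S_{l,r}\cap Q(N)$ and suppose $m\notin Q(K)$; I want to show $m\in g(z_i)\mathcal{S}$ for some $i$. Since $m\in Q(N)$, pick $v\in\z^3$ with $v^tM_Nv=m$; reducing modulo $d$ gives a class in $R(N,l,r)$. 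The key dichotomy from Lemma~\ref{lembad1} is that if $v\bmod d\in R_K(N,l,r)$ then $m\in Q(K)$, contrary to assumption; so $v\bmod d$, hence also $v\bmod l$, lies in $R(N,l,r)-R_K(N,l,r)$, i.e.\ in some $P_i$ or some $\widetilde{P_j}$.

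The main steps are then: (1) \emph{Handle the $\widetilde{P_j}$ classes first.} If $v\bmod l\in\widetilde{P_j}$, apply $\widetilde{T_j}$: by property~(ii) for $\widetilde{P_j}$, $\tfrac1l\widetilde{T_j}v\in\z^3$, it still represents $m$ under $N$ (since $\widetilde{T_j}^tM_N\widetilde{T_j}=l^2M_N$ forces $(\tfrac1l\widetilde{T_j}v)^tM_N(\tfrac1l\widetilde{T_j}v)=v^tM_Nv=m$), and its class mod $l$ lands in $P_1\cup\cdots\cup P_k\cup R_K(N,l,r)$. If it lands in $R_K(N,l,r)$ we again get a contradiction via Lemma~\ref{lembad1}; so after one application we may assume $v\bmod l\in P_i$ for some $i$. (2) \emph{Iterate within the $P_i$'s.} Starting from $v=v^{(0)}$ with $v^{(0)}\bmod l\in P_{i_0}$, define $v^{(n+1)}=\tfrac1l T_{i_n}v^{(n)}$, where $i_{n+1}$ is chosen so that $v^{(n+1)}\bmod d\in P_{i_{n+1}}\cup R_K(N,l,r)$ as guaranteed by property~(iii); each $v^{(n)}\in\z^3$ and $ (v^{(n)})^tM_Nv^{(n)}=m$. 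If at any stage $v^{(n+1)}\bmod d\in R_K(N,l,r)$, Lemma~\ref{lembad1} gives $m\in Q(K)$, a contradiction. Hence the sequence stays in $\bigcup_iP_i$ forever.

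The crux is step~(3): \emph{a trapped orbit forces an eigenvector.} Since there are finitely many $P_i$'s but infinitely many $n$, some index recurs; passing to a subsequence and composing the corresponding $T_i$'s, one gets a single matrix $A$ (a product of the $T_i$'s, still satisfying $A^tM_NA=l^{2e}M_N$ for the appropriate exponent $e$) and integral vectors $v^{(n_0)}, v^{(n_1)}, \dots$ all representing $m$ under $N$ with $v^{(n_{k+1})}=l^{-e}Av^{(n_k)}$. Because $\tfrac1l T_i$ (hence $l^{-e}A$) has infinite order by property~(i), one analyzes the real dynamics of the linear map $\tfrac1{l^{\,\cdot}}(\text{product})$ on the ellipsoid $\{x:x^tM_Nx=m\}$: an orbit that remains in a \emph{bounded} region (the finitely many residues mod $d$ times the ellipsoid constrains the possibilities — this is the point needing care, that boundedness of the orbit of $v^{(n)}$ modulo $d$ combined with the fixed norm $m$ really does force convergence to an eigendirection) must accumulate at a fixed point of the projective action, i.e.\ at an eigenvector $z_i$ of one of the $T_i$'s; pulling back, the $v^{(n)}$ are eventually scalar multiples of $z_i$, whence $m=(t z_i)^tM_N(t z_i)=t^2\,(z_i^tM_Nz_i)=g(z_i)t^2\in g(z_i)\mathcal{S}$.

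I expect step~(3) — rigorously extracting the eigenvector from a trapped orbit, and verifying that the infinite-order hypothesis plus the mod-$d$ constraint genuinely precludes any non-eigenvector trapped orbit — to be the main obstacle; the bookkeeping is to show the orbit cannot oscillate among several residue classes without either escaping to $R_K(N,l,r)$ or collapsing onto a common eigenline. Steps~(1) and~(2) are essentially formal consequences of the hypotheses and of Lemma~\ref{lembad1}, and the final arithmetic identity $m\in g(z_i)\mathcal{S}$ is immediate once the eigenvector is in hand. The union over $i$ in the conclusion simply records that we do not know in advance which $P_i$ the orbit gets trapped in.
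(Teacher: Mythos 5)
Note first that the paper does not prove this lemma at all: it is quoted verbatim as \cite[Theorem 2.3]{JOS}, so the comparison is with the argument given there, whose skeleton is exactly your steps (1) and (2): one application of $\widetilde{T_j}$ moves the class into some $P_i$ or into $R_K(N,l,r)$, a good class yields $m\in Q(K)$, and otherwise one iterates. Two small corrections to those steps: the implication ``$v\bmod l$ good $\Rightarrow m\in Q(K)$'' does not follow from Lemma \ref{lembad1} (its hypothesis $N\prec_{l,r}K$ fails here); it follows directly from the definition of a good vector, since $T\in R(K,N,l)$ and $\frac1lTv\in\z^3$ give $(\frac1lTv)^tM_K(\frac1lTv)=v^tM_Nv=m$. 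Also, condition (iii) already forces a trapped orbit to stay in one and the same class $P_i$, so the composition of different $T_i$'s and the passage to a subsequence in your step (3) are unnecessary: one iterates a single matrix $T_i$.

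The genuine gap is the core of your step (3), which you yourself flag. The claim that a trapped orbit ``must accumulate at a fixed point of the projective action, i.e.\ at an eigenvector'' is false for the maps in question: $\sigma=\frac1lT_i$ is an isometry of the positive definite real space $(\mathbb{R}^3,M_N)$, so if it has infinite order its complex eigenvalues are $e^{\pm i\theta}$ with $\theta$ not a rational multiple of $2\pi$, and orbits on the ellipsoid $\{x : x^tM_Nx=m\}$ stay on a ``latitude'' circle about the axis and never converge to it; no accumulation or boundedness argument will produce the eigenvector. What closes the gap is integrality, which your sketch never uses at this point: all iterates $v^{(n)}=\left(\frac1lT_i\right)^nv^{(0)}$ are \emph{integral} vectors with the same value $m$ of a positive definite form, hence there are only finitely many of them, so $v^{(a)}=v^{(b)}$ for some $a<b$, i.e.\ $\sigma^{b-a}$ fixes the nonzero vector $v^{(a)}$. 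Since the eigenvalues $e^{\pm i\theta}$ of $\sigma$ are not roots of unity (this is precisely where hypothesis (i) enters), any nonzero vector fixed by a power of $\sigma$ lies on the unique real eigenline of $T_i$, which is spanned by the primitive integral vector $z_i$; hence $v^{(a)}=tz_i$ with $t\in\z$ and $m=t^2g(z_i)\in g(z_i)\mathcal{S}$. With the dynamical accumulation argument replaced by this finiteness-and-periodicity argument, your outline becomes the proof of \cite[Theorem 2.3]{JOS}.
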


\begin{thm} \label{thmstable}
There are exactly 8 diagonal stable odd-regular ternary quadratic forms which are not regular.
$$
\begin{array}{llll}
L(1)=\langle 1,4,5\rangle,&L(2)=\langle 1,2,24\rangle,&L(3)=\langle 1,6,8\rangle,&L(4)=\langle 1,5,12\rangle , \\[0.2em]
L(5)=\langle 1,4,21\rangle,&L(6)=\langle 3,4,7\rangle,&L(7)=\langle 2,5,24\rangle,&L(8)=\langle 5,6,8\rangle .
\end{array}
$$
\end{thm}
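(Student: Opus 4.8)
The bound ``at most eight'' is precisely the content of Lemmas \ref{lemalpha1}--\ref{lemalpha7}. Indeed, if $L=\langle a,b,c\rangle$ $(a\le b\le c)$ is a diagonal stable odd-regular ternary $\z$-lattice which is not regular, then, $L$ being primitive, there is a unique $\alpha\in\{1,3,5,7\}$ with $8n+\alpha\ra L$ over $\z_2$ for all $n\ge 0$; Lemma \ref{lemalpha7} rules out $\alpha=7$ (reducing to $\alpha\in\{1,3,5\}$, since $L$ is not regular), and Lemmas \ref{lemalpha1}, \ref{lemalpha3}, \ref{lemalpha5} then pin $(a,b,c)$ down to one of $L(1),\dots,L(8)$. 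Hence the theorem will follow once we verify that each $L(i)$ is itself stable, not regular, and odd-regular. Stability is a routine local computation: for an odd prime $p\nmid dL(i)$ the lattice $L(i)_p$ is unimodular of rank $3$, hence isotropic, so $\langle 1,-1\rangle\ra L(i)_p$; and for each of the finitely many odd primes $p$ dividing $dL(i)$ one checks by hand that condition (i) or (ii) of $p$-stability holds. Non-regularity is likewise a finite matter: none of $L(1),\dots,L(8)$ occurs in the Jones--Pall list \cite{JP} of diagonal regular ternary forms, and for each one can exhibit an (even) integer represented by its genus but not by the form. In particular, since a ternary $\z$-lattice of class number one is regular, every $L(i)$ satisfies $h(L(i))>1$, so the genuine work is to establish odd-regularity.

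Fix $i$ and put $K=L(i)$. Condition (ii) in the definition of $S_{2,1}$-regularity is immediate, since $K$ already represents one of $1,3,5,7$, so $\gen(K)$ represents an odd integer. For condition (i) we argue as in \cite{O1, O3, JOS}, using Lemma \ref{lembad1}. Write $\gen(K)=\{K,N_1,\dots,N_{h-1}\}$ with $h=h(K)$. We seek a modulus $l$ --- a multiple of $8$, and in practice of the odd primes dividing $dK$, located by computer search --- with the following property: for every $j\in\{1,\dots,h-1\}$ and every residue $r$ with $0\le r<l$ for which $S_{l,r}$ contains an odd integer that is locally represented by $K$ everywhere, one has $N_j\prec_{l,r}K$. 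This is checked by the finite computation of the sets $R(N_j,l,r)$ and $R(K,N_j,l)$ together with the goodness test $\frac1d Tv\in\z^3$. Granting this, Lemma \ref{lembad1} gives $S_{l,r}\cap Q(N_j)\subset Q(K)$ for all such $j$ and $r$. Now let $m$ be an odd positive integer represented by $\gen(K)$. Then $m\in Q(K)$, or $m\in Q(N_j)$ for some $j$, and $m$ lies in one of the admissible progressions $S_{l,r}$; in either case $m\ra K$. This is condition (i), so $K$ is odd-regular.

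All of the above --- the local stability checks, the non-regularity witnesses, the class numbers and genus representatives of the eight forms, and the computations of $R(N_j,l,r)$, $R(K,N_j,l)$ with the attendant goodness test --- are finite and are performed with the aid of a computer. The one real obstacle is that the relation $N_j\prec_{l,r}K$ need not hold for every admissible residue $r$: for certain of the eight forms a few ``bad'' classes in $R(N_j,l,r)\setminus R_K(N_j,l,r)$ survive for the natural choices of $l$. When this happens I would fall back on Lemma \ref{lembad2}, which still transfers the representation after deleting finitely many exceptional values of the form $g(z_j)n^2$, and then verify directly that each such exceptional value is represented by $K$. Thus the heart of the argument is the search for a workable modulus $l$ for each form, and --- where no clean $l$ exists --- the identification and hand-checking of the finitely many exceptional square classes; once these are in place the conclusion is mechanical.
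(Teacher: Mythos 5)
Your proposal is sound and its skeleton coincides with the paper's: the bound of eight comes from Lemmas \ref{lemalpha1}--\ref{lemalpha7}, and the real content is the odd-regularity of the eight candidates (the paper, like you, treats stability and non-regularity of the listed forms as routine and records that each has class number two, $\gen(L(i))=\{L(i),M(i)\}$). Where you diverge is in how odd-regularity is verified. The paper does not run the $\prec_{l,r}$ machinery uniformly: for $L(1),L(2),L(5),L(7),L(8)$ it uses an explicit sublattice argument --- if $2n+1\ra M(i)$, parity/congruence conditions force the representing vector into a sublattice $K(i)$ of $M(i)$ (for instance $K(1)=\langle 1,4,20\rangle$ inside $M(1)=\langle 1,1,20\rangle$) which is in turn represented by $L(i)$ --- so no computation of $R(N,l,r)$ and no exceptional set arises there; only for $L(3),L(4),L(6)$ does it invoke Lemma \ref{lembad1}, with the pairs $(l,r)$ listed in the proof, and Lemma \ref{lembad2} is never needed in this theorem (it is reserved for Propositions \ref{prop1136} and \ref{prop2372} in Section 5). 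Your uniform plan via Lemma \ref{lembad1}, with Lemma \ref{lembad2} as a fallback, should work equally well and is arguably more systematic, at the price of more machine verification where the paper gets by with a one-line sublattice observation. Two small corrections: $\alpha$ need not be unique (only existence is used, the case split in Lemmas \ref{lemalpha1}--\ref{lemalpha7} being by which of $1,3,5$ is represented over $\z_2$, with Lemma \ref{lemalpha7} reducing the case where only $7$ is represented to regularity); and the exceptional set produced by Lemma \ref{lembad2} is a finite union of square classes $g(z_i)\mathcal{S}$, hence infinitely many integers, which one eliminates by representing the class representatives (or by a parity remark), as you in fact indicate in your closing sentence rather than by checking finitely many values.
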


\begin{proof}
It only remains to prove the odd-regularities of the 8 forms.
Note that the class number $h(L(i))$ of $L(i)$ is two for any $1\le i\le 8$. 
For each $1\le i\le 8$, we denote the other $\z$-lattice in the genus of $L(i)$ by $M(i)$ so that
$$
gen(L(i))=\{ L(i), M(i) \}.
$$
First, assume that $i\in \{ 1,2,5,7,8 \}$. If $2n+1\ra M(i)$, then one may easily show that there is a sublattice $K(i)$ of $M(i)$ such that
$$
2n+1\ra K(i)\ra L(i),
$$
and for example one may take $K(i)$ as follows.
$$
\begin{array}{ll}
M(1)=\langle 1,1,20\rangle, & K(1)=\langle 1,4,20\rangle, \\[0.5em]
M(2)=\begin{pmatrix} 3 & 1 \\ 1 & 3 \end{pmatrix} \perp \langle 6\rangle, & K(2)=\begin{pmatrix} 3 & 2 \\ 2 & 12 \end{pmatrix} \perp\langle 6\rangle, \\[1em]
M(5)=\langle 1,1,84\rangle, & K(5)=\langle 1,4,84\rangle, \\[0.5em]
M(7)=\begin{pmatrix} 7 & 3 \\ 3 & 7 \end{pmatrix} \perp \langle 6\rangle, & K(7)=\begin{pmatrix} 7 & 6 \\ 6 & 28 \end{pmatrix} \perp\langle 6\rangle, \\[1em]
M(8)=\begin{pmatrix} 11 & 1 \\ 1 & 11 \end{pmatrix} \perp \langle 2\rangle, & K(8)=\begin{pmatrix} 11 & 2 \\ 2 & 44 \end{pmatrix} \perp\langle 2\rangle.
\end{array}
$$
Second, assume that $i=3$. Note that
$$
M(3)=\langle 2\rangle \perp \begin{pmatrix} 4&2\\2&7\end{pmatrix}.
$$
and one may easily check that
$$
M(3)\prec_{l,r}L(3)
$$
for each  pair $(l,r)\in \{ (4,1),(4,3)\}$.
Thus by Lemma \ref{lembad1}, we have
$$
S_{2,1}\cap Q(M(3))\subset Q(L(3)),
$$
and this proves the odd-regularity of $L(3)$.
Third, assume that $i=4$. Note that
$$
M(4)=\begin{pmatrix} 2&0&1\\0&4&2\\1&2&9\end{pmatrix}
$$
and one may check that
$$
M(4)\prec_{l,r}L(4)
$$
for each pair $(l,r)\in \{ (4,3),(8,1),(8,5)\}$.
From this follows the odd-regularity of $L(4)$.
Last, in the case of $i=6$, 
$$
M(6)=\begin{pmatrix} 2&1&1\\1&7&0\\1&0&7\end{pmatrix}
$$
and one may use three pairs
$$
(l,r)=(4,1),(8,3),(8,7)
$$
to show the odd-regularity of $L(6)$. This completes the proof.
\end{proof}

\begin{rmk} \label{rmkprime}
Note that there are exactly 45 diagonal stable regular ternary quadratic forms (for this, see \cite{JP}).
It follows from this and Theorem \ref{thmstable} that there are exactly 53 diagonal stable odd-regular ternary quadratic forms.
One may easily check that if a prime $p$ divides the discriminant of a diagonal stable odd-regular ternary quadratic form, then $p\in \{2,3,5,7\}$.
\end{rmk}

\section{Diagonal odd-regular ternary quadratic forms}
In this section, we prove that there are at most 37 diagonal odd-regular ternary quadratic forms which are not stable and not regular.
And we show the odd-regularities of 31 forms among 37 candidates.
We first introduce the notion of missing primes.
Let $\langle a',b',c'\rangle$ be a diagonal odd-regular ternary quadratic form and let $\langle a,b,c\rangle$ be the stable odd-regular ternary quadratic form obtained from $\langle a',b',c'\rangle$ by taking $\lambda_q$ transformations, if necessary, repeatedly.
It might happen that there is an odd prime divisor $l$ of $a'b'c'$ which does not divide $abc$.
We call such an odd prime $l$ a missing prime.
Note that $\lambda_p \circ \lambda_q=\lambda_q \circ \lambda_p$ for any odd primes $p$ and $q$.
Thus if $l$ is a missing prime, then one of the following holds;
\begin{enumerate} [(i)]
\item $\langle a,l^2b,l^2c\rangle$ is odd-regular;
\item $\langle a,b,l^2c\rangle$ is odd-regular and $\left(\dfrac{-ab}{l}\right)=-1$.
\end{enumerate}

\begin{lem} \label{lemmp}
There is no missing prime greater than 7.
\end{lem}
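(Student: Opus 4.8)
The plan is to argue by contradiction: suppose $l > 7$ is a missing prime. By the dichotomy stated just before the lemma, either (i) $\langle a, l^2 b, l^2 c\rangle$ is odd-regular, or (ii) $\langle a, b, l^2 c\rangle$ is odd-regular with $\left(\tfrac{-ab}{l}\right) = -1$. In either case we obtain a \emph{diagonal} odd-regular ternary quadratic form whose discriminant is divisible by the odd prime $l$. The idea is to descend this form to a diagonal \emph{stable} odd-regular ternary form using Lemma \ref{lemdescend}, keeping track of whether the prime $l$ survives the descent, and then invoke the conclusion of Remark \ref{rmkprime}: any odd prime dividing the discriminant of a diagonal stable odd-regular ternary form lies in $\{2,3,5,7\}$. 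If $l$ survives, this is an immediate contradiction; the real work is to show $l$ \emph{cannot} be removed by the Watson transformations, or that even if it could, the original form $\langle a', b', c'\rangle$ already had $l$ dividing $a'b'c'$ and $l$ failed to be ``absorbed'' only because the unimodular component condition of Lemma \ref{lemdescend} failed — which pins down the $l$-adic structure tightly.

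First I would analyze case (ii), which looks the most constrained. Here $\langle a, b, l^2 c\rangle$ is odd-regular and $\langle a, b\rangle$ is anisotropic over $\z_l$ (that is exactly what $\left(\tfrac{-ab}{l}\right) = -1$ encodes, since $l \nmid ab$). Then Lemma \ref{lemdescend} applies with $p = l$: its hypothesis — the unimodular component of the localization at $l$ is anisotropic — is precisely met, so $\lambda_l(\langle a, b, l^2 c\rangle) \simeq \langle a, b, c\rangle$ is again diagonal odd-regular. But wait: $\langle a,b,c\rangle$ is by construction the stable form we started from, and $l \nmid abc$ since $l$ was missing. So in case (ii) the prime $l$ divides the discriminant of $\langle a, b, l^2 c\rangle$ but not that of its $\lambda_l$-descendant. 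The point is then that $\langle a, b, l^2 c \rangle$ itself, being diagonal odd-regular, must (after further $\lambda_q$-descents at the \emph{other} odd primes $q \mid abc$, $q \in \{3,5,7\}$) reduce to a diagonal stable odd-regular form, and during those descents the $l$-component $\langle l^2 c\rangle_l$ is unaffected because $l \notin \{3,5,7\}$ and $\lambda_3, \lambda_5, \lambda_7$ commute. Hence the resulting stable form still has $l$ dividing its discriminant (the $l$-part of the last coordinate is $l^2$, which descends to $l^0$ only under $\lambda_l$, which we are not allowed to apply since doing so returns the non-stable predecessor). This contradicts Remark \ref{rmkprime}.

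Case (i) is handled similarly but requires noticing that $\langle a, l^2 b, l^2 c\rangle$ has $l$-adic localization $\langle a\rangle \perp l^2\langle b, c\rangle$, whose unimodular component $\langle a\rangle$ is one-dimensional, hence automatically anisotropic (a rank-one form over $\z_l$ represents no zero). So Lemma \ref{lemdescend}'s hypothesis holds, $\lambda_l(\langle a, l^2 b, l^2 c\rangle) \simeq \langle a, b, c\rangle$, and the same ``descend at $3,5,7$ only'' argument shows the stable form reached from $\langle a, l^2 b, l^2 c\rangle$ retains $l$ in its discriminant — again contradicting Remark \ref{rmkprime}. I expect the main obstacle to be the bookkeeping needed to justify that the $\lambda_q$-descents for $q \in \{3,5,7\}$ genuinely leave the $l$-component intact and that the form one reaches is still \emph{stable at $l$}: one must check $l$-stability is preserved (or rather, is forced) so that Remark \ref{rmkprime}'s hypothesis — a \emph{stable} form — actually applies to the endpoint of the descent. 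In fact the cleanest route may be to observe directly that a diagonal stable form is $l$-stable for the odd prime $l$, and $l$-stability with $l \mid abc$ combined with the diagonal shape $\langle a, p^r b, p^s c\rangle$ forces $r + s \le 1$ at each odd prime, so a stable diagonal form simply cannot have $l^2 \mid$ (a coordinate), ruling out both (i) and (ii) at the endpoint and yielding the contradiction.
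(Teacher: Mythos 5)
There is a genuine gap: your argument is circular and, tellingly, never uses the hypothesis $l>7$. The forms $\langle a,l^2b,l^2c\rangle$ and $\langle a,b,l^2c\rangle$ in cases (i) and (ii) are \emph{not} stable at $l$: in case (ii) the $l$-adic Jordan splitting is $\langle a,b\rangle \perp \langle l^2c\rangle$ with $\langle a,b\rangle$ anisotropic, so neither condition of $l$-stability holds (the non-unimodular component has scale $l^2\z_l$, not $l\z_l$, and $\langle 1,-1\rangle$ does not embed), and similarly in case (i). Consequently, the endpoint of your ``descend at $3,5,7$ only'' chain is not a stable lattice, Remark \ref{rmkprime} does not apply to it, and no contradiction arises. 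You have also reversed the direction of the descent: applying $\lambda_l$ to $\langle a,b,l^2c\rangle$ \emph{produces} the stable form $\langle a,b,c\rangle$ (it does not ``return a non-stable predecessor''), and the definition of a missing prime already incorporates the fact that $l$ is removed by exactly this step. In short, the existence of an odd-regular form with an extra prime square in its discriminant is perfectly consistent with the classification of stable forms, so nothing formal can be squeezed out of Remark \ref{rmkprime}. A decisive sanity check: your argument nowhere distinguishes $l>7$ from $l\le 7$, so it would equally ``prove'' that no missing prime exists at all; but $\langle 1,4,9\rangle$ is odd-regular (Theorem \ref{thmnonstable}), $\lambda_3(\langle 1,4,9\rangle)\simeq\langle 1,1,4\rangle$ has discriminant prime to $3$, and $\left(\frac{-1\cdot 4}{3}\right)=-1$, so $3$ is a missing prime occurring via case (ii).

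What the lemma actually requires is quantitative arithmetic input, which is how the paper proceeds. In case (i) one compares, for $0\le n\le l-1$, the number of $n$ with $8n+\alpha$ locally represented by $\langle a,l^2b,l^2c\rangle$ (a positive proportion of $l$, since failures at each anisotropic prime $p\in\{3,5,7,l\}$ affect at most roughly $\frac{p+1}{2}\lceil l/p^2\rceil$ residues) with the number of $n$ for which $8n+\alpha$ can be globally represented: since $8n+\alpha<8l\le l^2$, any representation must come from $ax^2$ alone, giving at most $\left[\sqrt{2l}+\frac12\right]$ values. For $l\ge 157$ the first count exceeds the second, contradicting odd-regularity; the range $11\le l\le 151$ is then eliminated by computer. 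In case (ii) one exhibits, for each class $\eta$ modulo $8$, a small set $E_\eta$ of products of primes in $\{11,\dots,29\}$ that are locally represented; if $l>29$ these would have to be represented by the binary form $\langle a,b\rangle$ alone, which fails by inspection for all $54$ possible pairs $(a,b)$, and the cases $11\le l\le 29$ are again checked by computer. None of this can be recovered from the stable classification by the formal descent bookkeeping you propose.
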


\begin{proof}
Let $l$ be a missing prime.
First, assume that the quadratic form $\langle a,l^2b,l^2c\rangle$ is odd-regular, where $(abc,l)=1$ and $\langle a,b,c\rangle$ is stable odd-regular.
We assert that $l<157$. Assume to the contrary that $l \ge 157$.
We take an integer $\alpha \in \{ 1,3,5,7\}$ such that $\alpha \ra \langle a,l^2b,l^2c\rangle$ over $\z_2$.
Recall that for an odd prime $p$, $\Delta_p$ is a fixed non-square unit in $\z_p$. Note that
$$
\left\{ \gamma \in \z_p : \gamma \nra \langle 1,-\Delta_p\rangle \perp \langle p\epsilon_p\rangle \right\} 
=\left\{ p^{2k-1}\epsilon_p \Delta_p \left( \z_p^{\times}\right)^2 : k\in \n \right\}
$$
for any odd prime $p$ and an element $\epsilon_p\in \z_p^{\times}$. 
From this, one may easily show that
$$
\left\vert \left\{ 0\le n\le l -1 : 8n+\alpha \nra \langle a,l^2b,l^2c\rangle \ \ \text{over}\ \ \z_p \right\} \right\vert \le \frac{p+1}2\left\lceil \frac{l}{p^2} \right\rceil
$$
for any odd prime $p\neq l$, where $\lceil \cdot \rceil$ is the ceiling function.
Thus we have
$$
\vert \{ 0\le n\le l-1 : 8n+\alpha\nra 
\langle a,l^2 b,l^2 c \rangle \text{ over } \z_p \} \vert \le 
\begin{cases} 
2\left\lceil \displaystyle\frac{l}{9} \right\rceil&\text{if}\ \ p=3, \\[10pt]
3\left\lceil \displaystyle\frac{l}{25} \right\rceil&\text{if}\ \ p=5, \\[10pt]
4\left\lceil \displaystyle\frac{l}{49} \right\rceil&\text{if}\ \ p=7, \\[10pt]
\displaystyle\frac{l+1}{2}&\text{if}\ \ p=l.
\end{cases}
$$
From Theorem \ref{thmstable}, one may check that the set of odd primes at which the stable odd-regular quadratic form $\langle a,b,c\rangle$ is anisotropic is
$$
\emptyset,\ \{ 3\},\ \{ 5\},\ \{ 3,5\}\ \ \text{or}\ \ \{ 3,7\}.
$$
By the assumption that $l\ge 157$, we have $\displaystyle\frac{3}{25}l+3>\frac{4}{49}l+4$.
Thus we have
$$
\left\vert \left\{ 0\le n\le l-1 : 8n+\alpha \ra \gen(\langle a,l^2b,l^2c\rangle \right\} \right\vert \ge \left\lceil l-\left( \frac{2}{9}l+2+\frac{3}{25}l+3+\frac{l+1}{2} \right) \right\rceil.
$$
On the other hand, one may easily show that
$$
\left\vert \left\{ 0\le n\le l-1 : 8n+\alpha \ra \langle a,l^2b,l^2c\rangle \right\} \right\vert \le \left[ \sqrt{2l}+\frac12 \right],
$$
where $[\cdot]$ is the greatest integer function.
From the assumption that $l\ge 157$, one may easily check that
$$
\left\lceil l-\left( \frac{2}{9}l+2+\frac{3}{25}l+3+\frac{l+1}{2} \right) \right\rceil > \left[ \sqrt{2l}+\frac12 \right]
$$
and this is absurd. So we have $l<157$. 
With the help of computer, one may check that the quadratic form $\langle a,l^2b,l^2c\rangle$ is not odd-regular for any $11\le l\le 151$ and any stable odd-regular form $\langle a,b,c\rangle$.

Second, assume that the quadratic form $\langle a,b,l^2c\rangle$ is odd-regular, 
where $a\le b$, $(abc,l)=1$ and $\langle a,b,c\rangle$ is stable odd-regular.
From Theorem \ref{thmstable}, one may check that $(a,b)$ is one of the following 54 pairs;
$$
\begin{array}{ll}
(1,b),& b\in \{ 1,2,3,4,5,6,8,10,12,16,21,24,32,40,48,64\}, \\
(2,b),& b\in \{ 2,3,4,5,6,8,10,16,24,32\}, \\
(3,b),& b\in \{ 4,7,8,10\}, \\
(4,b),& b\in \{ 4,5,6,7,8,12,16,21,24\}, \\
(5,b),& b\in \{ 6,8,12,24\}, \\
(6,b),& b\in \{ 8,16\}, \\
(8,b),& b\in \{ 8,16,24,32,40,64\}, \\
(16,b),& b\in \{ 16,24,48\}.
\end{array}
$$
We assert that $l\le 29$. Assume to the contary that $l>29$.
Fix an integer $\alpha \in \{1,3,5,7\}$ such that $\alpha \ra \gen(\langle a,b,l^2c\rangle)$.
For each $\eta \in \{1,3,5,7\}$, we define a set $E_{\eta}$ as
$$
\begin{array}{ll}
E_1=\{1,11\cdot 19,13\cdot 29\}, \ \ \ &
E_3=\{11,11\cdot 17,13\cdot 23\}, \\[0.3ex]
E_5=\{13,13\cdot 17,11\cdot 23\}, \ \ \ &
E_7=\{ 23,11\cdot 13,17\cdot 23\}.
\end{array}
$$
Note that $E_{\eta}$ consists of positive integers congruent to $\eta$ modulo 8 and that any element in it is not divisible by any odd prime $p$ satisfying $p\le 7$ or $p\ge 31$.
From this and Remark \ref{rmkprime}, every element of $E_{\alpha}$ must be represented by $\gen(\langle a,b,l^2c\rangle)$.
By the odd-regularity of $\langle a,b,l^2c\rangle$, every element of $E_{\alpha}$ is represented by the quadratic form $\langle a,b,l^2c\rangle$ itself.
Since $l>29$, we have that every element of $E_{\alpha}$ must be represented by $\langle a,b\rangle$.
With the help of computer, one may easily check that for any $\eta \in \{1,3,5,7\}$ and for any $(a,b)$ of the 54 pairs given above, there is an element $e\in E_{\eta}$ which is not represented by the binary quadratic form $\langle a,b\rangle$.
This is a contradiction and we showed the assertion.
Now with the help of computer, one may easily check that $\langle a,b,l^2c\rangle$ is not odd-regular for any odd prime $11\le l\le 29$ and for any diagonal stable odd-regular ternary quadratic form $\langle a,b,c\rangle$.
This completes the proof.
\end{proof}

\begin{rmk}
It follows from Remark \ref{rmkprime} and Lemma \ref{lemmp} that if a prime $p$ divides the discriminant of a diagonal odd-regular ternary quadratic form, then $p\in \{2,3,5,7\}$.
\end{rmk}

Following the method described right after Remark 4.2 in \cite{KO}, one may get the list of 147 candidates for diagonal odd-regular ternary quadratic forms including 102 regular forms and 8 stable odd-regular forms in Theorem \ref{thmstable}.
So we have

\begin{thm} \label{thmmain}
There are at most 147 diagonal odd-regular ternary quadratic forms.
\end{thm}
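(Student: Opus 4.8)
The plan is to reverse the Watson descent of Lemma~\ref{lemdescend}. By that lemma together with the paragraph following the definition of stability, every diagonal odd-regular ternary $\z$-lattice $\langle a',b',c'\rangle$ is carried, by a finite composition of Watson transformations $\lambda_q$ at suitable odd primes $q$, to a diagonal \emph{stable} odd-regular ternary $\z$-lattice $\langle a,b,c\rangle$; and by Theorem~\ref{thmstable} together with the Jones--Pall classification \cite{JP} there are exactly $53$ of the latter, namely the $45$ stable regular forms and the $8$ exceptional forms $L(1),\dots,L(8)$. Hence it suffices to enumerate all diagonal odd-regular ternary forms that lie above one of these $53$ forms under iterated inverse Watson transformations and to check that there are exactly $147$ of them in all: the $102$ diagonal regular forms of \cite{JP}, the $8$ forms of Theorem~\ref{thmstable}, and $37$ further non-stable non-regular candidates.

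First I would describe a single inverse step. If $\langle a',b',c'\rangle$ is odd-regular and $l$ is an odd prime dividing $a'b'c'$ but not the product of the coefficients of the stable form it reduces to --- a missing prime --- then Lemma~\ref{lemmp} gives $l\le 7$, and the dichotomy recorded just before that lemma forces, up to a permutation of the coefficients, that the next form down is obtained by an inverse move of the shape $\langle a,b,c\rangle\mapsto\langle a,l^2b,l^2c\rangle$ or $\langle a,b,c\rangle\mapsto\langle a,b,l^2c\rangle$ with $\left(\frac{-ab}{l}\right)=-1$. Thus going upward from the $53$ stable forms one never introduces a prime outside $\{3,5,7\}$, and by the remark following Lemma~\ref{lemmp} the discriminant of every diagonal odd-regular ternary form is divisible only by $2,3,5,7$. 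Each inverse move strictly increases the discriminant, and by \cite{O2} the discriminant of an $S_{2,1}$-regular ternary form is bounded by $R(2,1)$; hence the ascent terminates and the tree of candidates rooted at the $53$ stable forms is finite.

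The enumeration itself I would carry out exactly as in the method described right after Remark~4.2 of \cite{KO}. Starting from each of the $53$ diagonal stable odd-regular ternary forms, repeatedly apply the two admissible inverse moves above; every form so produced is automatically positive definite and primitive, with all of its odd prime discriminant divisors in $\{3,5,7\}$. At each new node one tests odd-regularity --- the necessary local conditions and the finite effective test being available once the Watson-stable bounds of Section~2 and the local computations in the proof of Lemma~\ref{lemmp} are in hand --- and prunes any branch along which odd-regularity fails. Collecting all forms that survive this search, together with the $102$ diagonal regular ternary forms of \cite{JP} (which are automatically odd-regular), yields a list of $147$ forms containing every diagonal odd-regular ternary quadratic form. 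This is the assertion.

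The step I expect to be the main obstacle is not any single inequality but guaranteeing that this finite search is complete and sound: one must be certain that the termination bound is correct, so that no odd-regular form is overlooked at a deeper level of the tree, and that the odd-regularity test applied at each node really detects every failure --- which is precisely why the missing-prime analysis of Lemma~\ref{lemmp} and the descent machinery of Section~2 must be in place first. Once they are, what remains is the (computer-assisted) bookkeeping that produces the number $147$.
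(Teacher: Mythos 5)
Your overall strategy is the same as the paper's: classify the diagonal stable odd-regular forms (Theorem \ref{thmstable}, giving $45+8=53$ of them), bound the missing primes by Lemma \ref{lemmp}, and then enumerate all diagonal odd-regular forms lying above the stable ones by reversing the Watson transformations, following the method of \cite{KO}. However, there is a genuine gap in the step where you specify the ascent: you take as the only admissible inverse moves $\langle a,b,c\rangle\mapsto\langle a,l^2b,l^2c\rangle$ and $\langle a,b,c\rangle\mapsto\langle a,b,l^2c\rangle$ with $\left(\frac{-ab}{l}\right)=-1$. That dichotomy is stated (and is only valid) for a \emph{missing} prime $l$, i.e.\ one with $(l,abc)=1$. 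For a prime $l$ already dividing the discriminant of the lower form, a single $\lambda_l$-ascent can change the discriminant by a factor of $l$ rather than $l^2$ or $l^4$, because it can move between the local Jordan types $\langle \epsilon_1,\epsilon_2\rangle\perp\langle l\epsilon_3\rangle$ and $\langle \epsilon_1\rangle\perp l\langle \epsilon_2,\epsilon_3\rangle$. Concretely, $\lambda_3(\langle 3,6,8\rangle)\simeq\langle 1,2,24\rangle$ and $\lambda_3(\langle 1,3,6\rangle)\simeq\langle 1,2,3\rangle$, with discriminant ratio $3$ in both cases; neither $\langle 3,6,8\rangle$ nor $\langle 1,3,6\rangle$ (and hence neither $\langle 1,3,54\rangle$, which lies above $\langle 1,3,6\rangle$) is reachable from any of the $53$ stable forms by your two moves, since none of their coefficients is divisible by $l^2$ for $l\in\{3,5,7\}$ and they are not themselves stable. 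But $\langle 3,6,8\rangle=L(5)$ and $\langle 1,3,54\rangle=L(6)$ are among the $37$ candidates of Table \ref{table37}, and $L(5)$ is in fact proved odd-regular in Theorem \ref{thmnonstable}. So the tree you describe does not contain every diagonal odd-regular form, and the containment on which the bound ``at most $147$'' rests is not established; with your move set the enumeration would terminate with a strictly smaller (and incorrect) list.

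The repair is to enumerate, for each $l\in\{3,5,7\}$, \emph{all} diagonal $L$ with $\lambda_l(L)$ isometric to a form already in the tree (equivalently, all $l$-adic Jordan shapes whose Watson descent lands on the given form), not only the two shapes coming from the missing-prime analysis; Lemma \ref{lemmp} is then used only to rule out ascents at primes $l>7$ not dividing the stable discriminant. Two smaller points to make explicit: the Legendre condition $\left(\frac{-ab}{l}\right)=-1$ should be replaced by anisotropy of the unimodular Jordan component of the upper form at $l$ (which is what Lemma \ref{lemdescend} actually needs, and is automatic when that component has rank one), and the soundness of pruning a whole branch once a node fails odd-regularity rests on Lemma \ref{lemdescend} applied along the descent chain (any odd-regular form above would force the node to be odd-regular), which you use implicitly but should state; the termination of the search in practice comes from this pruning rather than from the abstract bound $R(2,1)$ of \cite{O2}, since non-odd-regular nodes that survive the finite test must still be ascended.
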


The remaining 37 candidates $L(i)$ $(1\le i\le 37)$ are listed in Table \ref{table37}. In the table, the 6 forms whose odd-regularities are not proved in this article are marked with $\dag$.

\begin{table} [ht]
\caption{37 candidates for diagonal odd-regular ternary quadratic forms}
\begin{tabular}{|lll|}
\hline
 $L(1)=\langle 1,1,36\rangle$,&$L(2)=\langle 1,4,9\rangle$,&$L(3)=\langle 1,5,20\rangle$,\\
\hline
$L(4)=\langle 2,3,24\rangle$,&$L(5)=\langle 3,6,8\rangle$,&$L(6)=\langle 1,3,54\rangle ^{\dag}$,\\
\hline
$L(7)=\langle 3,4,15\rangle$,&$L(8)=\langle 1,12,21\rangle$,&$L(9)=\langle 3,7,12\rangle$,\\
\hline
$L(10)=\langle 1,5,60\rangle$,&$L(11)=\langle 1,9,36\rangle$,&$L(12)=\langle 3,4,27\rangle$,\\
\hline
$L(13)=\langle 1,6,72\rangle$,&$L(14)=\langle 1,18,24\rangle$,&$L(15)=\langle 2,3,72\rangle$,\\
\hline
$L(16)=\langle 2,9,24\rangle$,&$L(17)=\langle 6,8,9\rangle$,&$L(18)=\langle 1,5,100\rangle ^{\dag}$,\\
\hline
$L(19)=\langle 1,12,45\rangle$,&$L(20)=\langle 5,9,12\rangle$,&$L(21)=\langle 1,21,28\rangle$,\\
\hline
$L(22)=\langle 3,7,28\rangle$,&$L(23)=\langle 2,15,24\rangle$,&$L(24)=\langle 6,8,15\rangle$,\\
\hline
$L(25)=\langle 3,15,20\rangle$,&$L(26)=\langle 1,9,108\rangle ^{\dag}$,&$L(27)=\langle 1,16,72\rangle ^{\dag}$,\\
\hline
$L(28)=\langle 1,10,120\rangle$,&$L(29)=\langle 1,30,40\rangle$,&$L(30)=\langle 1,21,84\rangle$,\\
\hline
$L(31)=\langle 3,7,84\rangle$,&$L(32)=\langle 1,45,60\rangle$,&$L(33)=\langle 5,9,60\rangle$,\\
\hline
$L(34)=\langle 1,24,144\rangle ^{\dag}$,&$L(35)=\langle 3,10,120\rangle$,&$L(36)=\langle 3,30,40\rangle$,\\ 
\hline
$L(37)=\langle 3,8,216\rangle ^{\dag}$.&&\\
\hline
\end{tabular}
\label{table37}
\end{table}


\begin{prop} \label{prop1136}
The quadratic form $x^2+y^2+36z^2$ represents every integer congruent to 5 modulo 6 which is represented by the quadratic form $8x^2+9y^2+20z^2+8xz$.
\end{prop}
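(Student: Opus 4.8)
The plan is to deduce the statement from the ``good vector'' criterion of Lemma~\ref{lembad1}. Put $K=\langle 1,1,36\rangle$, so that $M_K=\diag(1,1,36)$, and let $N$ be the ternary $\z$-lattice whose matrix presentation is
$$
M_N=\begin{pmatrix} 8 & 0 & 4 \\ 0 & 9 & 0 \\ 4 & 0 & 20 \end{pmatrix},
$$
so that the quadratic form attached to $N$ is $8x^2+9y^2+20z^2+8xz$. Taking $l=6$ (forced by the progression $5\bmod 6$) and $r=5$, so that $0\le r<l$ and moreover $dN=1296=6^2\cdot dK$, the proposition is precisely the inclusion $S_{6,5}\cap Q(N)\subset Q(K)$. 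Hence, by Lemma~\ref{lembad1}, it suffices to prove
$$
N\prec_{6,5}K,
$$
that is, that every $v\in(\z/6\z)^3$ with $v^tM_Nv\equiv 5\Mod 6$ is good with respect to $K,N,6$ and $5$.

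I would first make the set $R(N,6,5)$ explicit. Reducing the value $v^tM_Nv=8v_1^2+9v_2^2+20v_3^2+8v_1v_3$ modulo $2$ and modulo $3$ and using the Chinese Remainder Theorem, one sees that $v^tM_Nv\equiv 5\Mod 6$ holds exactly when $v_2$ is odd and $v_1\not\equiv v_3\Mod 3$, which cuts $R(N,6,5)$ down to a short, concrete list of residue classes. Next I would exhibit an explicit finite family $T^{(1)},\dots,T^{(m)}\in R(K,N,6)$ of integral matrices solving $T^tM_KT=36M_N$. Writing $t_1,t_2,t_3$ for the columns of $T$, such a $T$ is a solution of the finite system
$$
Q_K(t_1)=288,\quad Q_K(t_2)=324,\quad Q_K(t_3)=720,\quad B_K(t_1,t_2)=0,\quad B_K(t_1,t_3)=144,\quad B_K(t_2,t_3)=0,
$$
and the solution set may be enlarged by composing on the left with elements of the orthogonal group $O(K)$, which acts on $R(K,N,6)$. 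Finally, for each residue class $v$ surviving the first step I would point to some $T^{(j)}$ with $\frac16T^{(j)}v\in\z^3$; since, for fixed $T^{(j)}$, the set of such $v$ is the kernel of $T^{(j)}$ modulo $6$, this is the bookkeeping verification that the mod-$6$ kernels of the chosen matrices cover $R(N,6,5)$.

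I expect the main obstacle to be this middle step: choosing a family of matrices $T$ rich enough that their kernels modulo $6$ exhaust the admissible residue classes, which is the part that genuinely calls for a computer search. If it should happen that $R(N,6,5)\neq R_K(N,6,5)$, one falls back on Lemma~\ref{lembad2}, partitioning the leftover classes into blocks stabilized by suitable matrices of infinite order, reading off the exceptional progressions $g(z_i)\mathcal S$, and checking directly that the finitely many integers of the form $g(z_i)n^2$ that lie in $S_{6,5}$ are represented by $K$. The remaining ingredients---the description of $R(N,6,5)$, the verification of $T^tM_KT=36M_N$ for each listed matrix, and the kernel computation---are routine.
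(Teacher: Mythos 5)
Your general framework is the same as the paper's (Lemmas \ref{lembad1} and \ref{lembad2}), but as written the proposal has a genuine gap: all of the decisive content is deferred to a computation you have not performed, and the branch you treat as primary in fact fails. One can check by hand that $N\prec_{6,5}K$ is false. The class $v=(1,3,0)\in (\z/6\z)^3$ lies in $R(N,6,5)$, since $Q_N(1,3,0)=89\equiv 5\Mod 6$, but it is not good: a matrix $T\in R(K,N,6)$ has columns $t_1,t_2,t_3$ with $Q_K(t_1)=288$, $Q_K(t_2)=324$, $B_K(t_1,t_2)=0$, and goodness of $(1,3,0)$ forces $t_1\equiv -3t_2\Mod 6$, so every coordinate of $t_1$ must be $\equiv 0$ or $3\Mod 6$. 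The vectors of $K$-norm $288$ are $(\pm12,\pm12,0)$, $(\pm12,0,\pm2)$, $(0,\pm12,\pm2)$, so $t_1=(\pm12,\pm12,0)$ and hence $t_2$ must be even; but the even vectors of $K$-norm $324$ are $(\pm18,0,0)$, $(0,\pm18,0)$, $(\pm6,\pm12,\pm2)$, $(\pm12,\pm6,\pm2)$, none of which is orthogonal to $(\pm12,\pm12,0)$. So $R(N,6,5)\neq R_K(N,6,5)$, your first route cannot succeed, and you are necessarily in the fallback case --- which is exactly where the actual proof lives and which you only sketch. (Also, $l=6$ is not ``forced'': the paper works with the simpler pair $(l,r)=(3,2)$, which is legitimate because $S_{6,5}\subset S_{3,2}$, finds the single bad orbit $\pm(1,0,0)$, exhibits an explicit infinite-order $\tau_1$ fixing it, and concludes because the excluded square class $8\mathcal{S}$ contains no integer $\equiv 5\Mod 6$, as $8k^2$ is even.)

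Two further points about your fallback as described. First, the set of integers $g(z_i)n^2$ lying in $S_{6,5}$ need not be finite --- if $g(z_i)\equiv 5\Mod 6$ it is infinite --- so ``check the finitely many exceptional integers directly'' is not a valid step in general; what rescues the argument is either that $g(z_i)\mathcal{S}\cap S_{6,5}=\emptyset$ (as happens here with $g=8$), or that $K$ represents $g(z_i)$ itself, in which case it represents the whole square class. Second, until the bad classes modulo your chosen $l$, the matrices $T_i$ (and possibly $\widetilde{T_j}$), their eigenvectors, and this final disjointness or representability check are actually produced, what you have is an outline of the method rather than a proof of the proposition.
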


\begin{proof}
We use Lemma \ref{lembad2} with
$$
K=\langle 1,1,36\rangle, \quad N=\begin{pmatrix} 8&0&4\\0&9&0\\4&0&20 \end{pmatrix}
$$
and $l=3, r=2$. One may easily check that
$R(N,l,r)-R_K(N,l,r)=\{ \pm (1,0,0)\}$. By taking $k=1$, $P_1=\{ \pm (1,0,0)\}$ and
$$
\tau_1=\frac13 \begin{pmatrix} 3&1&2\\0&-1&4\\0&-2&-1\end{pmatrix},
$$
one may easily show that all the conditions in Lemma \ref{lembad2} are satisfied.
Note that $\text{ker}(\tau_1-I)=\langle (1,0,0)\rangle$ and thus it follows that every positive integer congruent to 2 modulo 3 which is not of the form $8k^2 (k\in \z)$ is represented by $K$.
The proposition follows immediately from this.
\end{proof}

\begin{prop} \label{prop1}
The diagonal ternary quadratic form $L(1)=\langle 1,1,36\rangle$ is odd-regular.
\end{prop}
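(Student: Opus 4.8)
The plan is to deduce the odd-regularity of $L(1)=\langle 1,1,36\rangle$ from the representation result just proved in Proposition \ref{prop1136}, together with a short local analysis at the prime $2$ and a direct check of the genus of $L(1)$. First I would record that $h(L(1))=2$; let $M(1)$ denote the genus mate, so $\gen(L(1))=\{L(1),M(1)\}$. An odd positive integer $n$ that is represented by $\gen(L(1))$ is represented either by $L(1)$ itself — in which case there is nothing to prove — or by $M(1)$, so it suffices to show that every odd $n$ represented by $M(1)$ is represented by $L(1)$.

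The key point is that Proposition \ref{prop1136} handles the arithmetic progression $n\equiv 5\pmod 6$: if the genus mate $M(1)$ is (isometric to, or contains as a sublattice) the form $8x^2+9y^2+20z^2+8xz$ appearing there, then every $n\equiv 5\pmod 6$ represented by $M(1)$ is represented by $L(1)=K$. So the first step is to identify $M(1)$ explicitly and exhibit the sublattice relation $N\hookrightarrow M(1)$ where $N=\begin{pmatrix}8&0&4\\0&9&0\\4&0&20\end{pmatrix}$; this is a finite computation. It then remains to treat odd $n$ with $n\equiv 1$ or $3\pmod 6$, i.e. $n\equiv 1,3\pmod 6$ among the odd residues. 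For these I would run the same machinery of Lemma \ref{lembad1}/Lemma \ref{lembad2} with other choices of the pair $(l,r)$ — for instance $l=6$ with $r=1$ and $r=3$, or working modulo a power of $2$ times $3$ — to cover the residue classes not already handled, possibly after first noting which odd residues mod $6$ (or mod $12$, mod $24$) are even locally representable by $\gen(L(1))$ at all. One cuts down the casework by observing that $L(1)_2$ represents exactly the odd $2$-adic integers in certain classes, and anisotropy of $L(1)$ occurs only at primes in $\{2,3\}$ (here in fact only genuinely at one odd prime, or none), so Lemma \ref{lemprime}-type reasoning limits which odd $n$ can fail.

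The main obstacle I expect is the residue classes $n\equiv 1\pmod 6$ and $n\equiv 3\pmod 6$: Proposition \ref{prop1136} only delivers $n\equiv 5\pmod 6$, and for the other classes one must either find suitable transformations $T$ (or $\widetilde T$) realizing $N\prec_{l,r}K$ outright — which would make the argument clean — or, if some class is obstructed, carry an exceptional set $g(z_i)\mathcal{S}$ as in the conclusion of Lemma \ref{lembad2} and then verify by hand (or by computer, in the spirit of the explicit checks elsewhere in the paper) that the finitely many square-times-constant exceptions are in fact represented by $L(1)$. Concretely, I anticipate the proof will read: for $(l,r)\in\{(3,2),(\,\cdot\,,\cdot),\dots\}$ one checks $N\prec_{l,r}K$ or applies Lemma \ref{lembad2} with an explicit partition and matrices $\tau_i$ as in Proposition \ref{prop1136}, conclude that every odd integer represented by $M(1)$ lies in $Q(L(1))$ up to a finite, explicitly-listed exceptional set, and then dispatch that set directly; this yields $S_{2,1}\cap Q(\gen(L(1)))\subset Q(L(1))$, which is exactly the odd-regularity of $L(1)$.
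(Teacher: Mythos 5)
Your overall architecture matches the paper's (reduce to the genus mate, use Proposition \ref{prop1136} for the class $5 \bmod 6$, treat the remaining odd classes separately), but there is a genuine gap at the central step. You propose to ``exhibit the sublattice relation $N\hookrightarrow M(1)$'' with $N=\begin{pmatrix}8&0&4\\0&9&0\\4&0&20\end{pmatrix}$ and conclude that every $n\equiv 5\pmod 6$ represented by $M(1)$ is represented by $K=L(1)$. That inference runs in the wrong direction: $N\hookrightarrow M(1)$ only gives $Q(N)\subset Q(M(1))$, while Proposition \ref{prop1136} requires $n$ to be represented by $N$. (Indeed $dN=1296\neq 36=dM(1)$, so $N$ is a proper, index-$6$ sublattice of $M(1)$, and no containment of representation sets in the needed direction is automatic.) The real content of the paper's proof is precisely this transfer: writing $M(1)=\langle 1,4,9\rangle$ and starting from $x^2+4y^2+9z^2=2n+1$, parity forces (after the easy case $x$ odd, $z$ even, which lands in $\langle 1,4,36\rangle\subset\langle 1,1,36\rangle$) that $x=2x'$; if $2n+1\not\equiv 2\pmod 3$ then $x'y\equiv 0\pmod 3$ and the representation descends to $\langle 4,9,36\rangle\subset\langle 1,1,36\rangle$; and in the remaining case $2n+1\equiv 5\pmod 6$ one has $x'y\not\equiv 0\pmod 3$, may assume $x'\equiv y\pmod 3$ after a sign change, and the substitution $z_1=\frac{x'-y}{3}$ yields the identity
$$
2n+1=8(x'-2z_1)^2+9z^2+20z_1^2+8(x'-2z_1)z_1,
$$
i.e.\ an explicit representation of $2n+1$ by $N$, to which Proposition \ref{prop1136} then applies. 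Without an argument of this kind, your appeal to Proposition \ref{prop1136} does not connect to the genus mate at all.

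Two smaller points. First, the residue classes you flag as the ``main obstacle'' ($2n+1\equiv 1,3\pmod 6$) are in fact the easy ones: they are dispatched by the elementary congruence descent above, with no need for further $(l,r)$ applications of Lemma \ref{lembad1} or \ref{lembad2}; your plan to cover them by additional, unspecified $\prec_{l,r}$ computations between $M(1)$ (or $N$) and $K$ is left entirely unverified and may well fail for some classes, which is presumably why the paper avoids it. Second, identifying the genus mate explicitly as $M(1)=\langle 1,4,9\rangle$ is not incidental bookkeeping: the whole argument is an explicit manipulation of representations by this particular diagonal form, so the proof cannot be carried out at the level of generality at which you leave $M(1)$ unspecified.
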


\begin{proof}
Let $n$ be a nonnegative integer such that $2n+1\ra \gen(L(1))$.
Note that the class number $h(L(1))$ of $L(1)$ is two and
$$
\gen(L(1))=\left\{ L(1), M(1)=\langle 1,4,9\rangle \right\}.
$$
We may assume that $2n+1\ra \langle 1,4,9\rangle$ and thus there is a vector $(x,y,z)\in \z^3$ such that
$$
x^2+4y^2+9z^2=2n+1.
$$
We may assume that $x\equiv 0\Mod 2$ since otherwise, we have $z\equiv 0\Mod 2$ and from this follows that
$$
2n+1\ra \langle 1,4,36\rangle \ra \langle 1,1,36\rangle.
$$
If we write $x=2x'$, then $4x'^2+4y^2+9z^2=2n+1$. We may assume that $2n+1\equiv 2\Mod 3$ since otherwise, we have $x'y\equiv 0\Mod 3$ and it follows that
$$
2n+1\ra \langle 4,9,36\rangle \ra \langle 1,1,36\rangle.
$$
Thus we have $x'y\not\equiv 0\Mod 3$ and we may assume that $x'\equiv y\Mod 3$. If we let $x_1=x', y_1=z$ and $z_1=\dfrac{x'-y}3$, then we have
\begin{align*}
2n+1&=4x'^2+4y^2+9z^2 \\
&=4x_1^2+4(x_1-3z_1)^2+9y_1^2 \\
&=8x_1^2+36z_1^2+9y_1^2-24x_1z_1 \\
&=8(x_1-2z_1)^2+9y_1^2+20z_1^2+8(x_1-2z_1)z_1.
\end{align*}
Now the proposition follows immediately from Proposition \ref{prop1136}.
\end{proof}

\begin{prop} \label{prop2372}
The quadratic form $2x^2+3y^2+72z^2$ represents every integer congruent to 1 modulo 4 which is represented by the quadratic form $2x^2+3y^2+18z^2$.
\end{prop}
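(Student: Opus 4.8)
The plan is to derive the statement from Lemma~\ref{lembad2}, applied with
$$
K=\langle 2,3,72\rangle,\qquad N=\langle 2,3,18\rangle,\qquad l=4,\ r=1,
$$
so that the conclusion of Lemma~\ref{lembad2} will read $(S_{4,1}\cap Q(N))-\bigcup_i g(z_i)\mathcal{S}\subset Q(K)$, and the proposition then reduces to checking that the exceptional set $\bigcup_i g(z_i)\mathcal{S}$ does not meet $S_{4,1}$. First I would record that $\diag(4,4,2)\in R(K,N,4)$, since $\diag(4,4,2)\,M_K\,\diag(4,4,2)=\diag(32,48,288)=16M_N$; from this one checks that $R_K(N,4,1)$ consists exactly of those $v=(v_1,v_2,v_3)\in R(N,4,1)$ with $v_3$ even. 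A short congruence computation modulo $4$ (using $2v_1^2,18v_3^2\in\{0,2\}$ and $3v_2^2\in\{0,3\}\pmod 4$) shows that $R(N,4,1)$ is the set of residues with $v_2$ odd and $v_1+v_3$ odd, so
$$
R(N,4,1)-R_K(N,4,1)=\{\,v:\ v_1\ \text{even},\ v_2\ \text{odd},\ v_3\ \text{odd}\,\}.
$$
In representation-theoretic terms this is the elementary dichotomy: if $2x^2+3y^2+18z^2\equiv 1\pmod 4$ then $y$ is odd and exactly one of $x,z$ is odd, and the case $z$ even is already settled by writing $z=2w$.

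Next I would exhibit the matrices $T_i$ required by Lemma~\ref{lembad2}: integral matrices with $T_i^tM_NT_i=16M_N$ (equivalently $\tfrac14T_i$ a rational automorph of $\langle 2,3,18\rangle$) that are of infinite order, together with a partition of the bad set above, possibly also using components $\widetilde{P_j}$ handled by finite-order auxiliary matrices, so that conditions (i)--(iii) of Lemma~\ref{lembad2} hold. As in the proof of Proposition~\ref{prop1136}, the natural choice is a ``rotation'' fixing one coordinate axis and acting on the complementary binary plane, together with its conjugates by the sign automorphs of $N$; these are located by an explicit, computer-assisted search over small integral matrices. Since such a rotation fixes a coordinate axis, its integral primitive eigenvector $z_i$ lies on that axis, so $g(z_i)=Q_N(z_i)$ is one of the coefficients $2,3,18$ of $N$.

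The last step is the observation that $g(z_i)n^2\not\equiv 1\pmod 4$ for every integer $n$: indeed $2n^2$ and $18n^2$ are even, while $3n^2\in\{0,3\}\pmod 4$. Hence $\bigl(\bigcup_i g(z_i)\mathcal{S}\bigr)\cap S_{4,1}=\emptyset$, and Lemma~\ref{lembad2} yields $S_{4,1}\cap Q(N)\subset Q(K)$, which is exactly the assertion. The main obstacle I anticipate is the construction in the second step---finding the matrices $T_i$ and a partition for which condition (iii) of Lemma~\ref{lembad2} is verified---since there is no general recipe and it must be confirmed by direct computation; once the $T_i$'s are in hand, the eigenvector bookkeeping and the final congruence check are routine, and it is precisely this congruence check (the fact that $3$ is a nonsquare and $2,18$ are even) that explains why the proposition holds with no exceptions, unlike the analogous statement with $S_{2,1}$ in place of $S_{4,1}$.
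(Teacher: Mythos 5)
Your framework is the same as the paper's: Lemma \ref{lembad2} with $K=\langle 2,3,72\rangle$, $N=\langle 2,3,18\rangle$, $l=4$, $r=1$, and your determination of the bad set agrees with the paper's, namely $R(N,4,1)-R_K(N,4,1)=\{\pm(0,1,1),\pm(0,1,3),\pm(2,1,1),\pm(2,1,3)\}$, i.e. exactly the classes with $v_1$ even and $v_2,v_3$ odd. But the heart of any proof along these lines is the explicit exhibition of the infinite-order matrices $T_i$, the auxiliary matrices $\widetilde{T_j}$, and a partition of the bad set satisfying conditions (i)--(iii) of Lemma \ref{lembad2}; this is precisely the step you defer to ``a computer-assisted search,'' so the proposal contains no argument at the decisive point. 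The paper does this work: it takes $P_1=\{\pm(0,1,1),\pm(2,1,3)\}$ and $\widetilde{P_1}=\{\pm(0,1,3),\pm(2,1,1)\}$ together with explicitly written matrices $\tau_1$ and $\widetilde{\tau_1}$ with denominator $4$, and verifies the hypotheses directly.

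More seriously, your planned endgame is not available for this lattice. You expect each $T_i$ to be a rotation fixing a coordinate axis, so that $g(z_i)\in\{2,3,18\}$ and $\bigcup_i g(z_i)\mathcal{S}$ is disjoint from $S_{4,1}$. No such infinite-order $T$ exists: since $N$ is diagonal, if $\frac14T$ fixes a coordinate axis it stabilizes the complementary coordinate plane, and its restriction there is an integral matrix $S$ with $S^t\,\mathrm{diag}(p,q)\,S=16\,\mathrm{diag}(p,q)$ for $(p,q)\in\{(3,18),(2,18),(2,3)\}$; the first-column equations $a^2+6c^2=16$, $a^2+9c^2=16$ and $2a^2+3c^2=32$ force $c=0$, $a=\pm4$, hence $b=0$, $d=\pm 4$, so $\frac14S$ (and therefore $\frac14T$) has finite order, violating condition (i). In the paper's actual construction the relevant eigenvector is $(0,-3,1)$, so $g(z_1)=3\cdot 9+18\cdot 1=45\equiv 1\pmod 4$: the exceptional set $45\mathcal{S}$ \emph{does} meet $S_{4,1}$, and the proof must be (and is) completed by the separate observation that $K$ represents $45=2\cdot 3^2+3\cdot 3^2$, whence every $45k^2$ is represented as well. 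Without producing admissible matrices and without this final repair, your argument does not reach the stated conclusion.
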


\begin{proof}
We let
$$
K=\langle 2,3,72\rangle,\ N=\langle 2,3,18\rangle
$$
and $l=4, r=1$. One may easily check that
$$
R(N,l,r)-R_K(N,l,r)=\left\{ \pm (0,1,1),\pm (0,1,3),\pm (2,1,1),\pm (2,1,3) \right\} .
$$
By taking $k=k'=1$, $P_1=\{ \pm (0,1,1),\pm (2,1,3)\}$, $\tilde{P_1}=\{ \pm (0,1,3),\pm (2,1,1)\}$ and
$$
\tau_1=\frac14\begin{pmatrix} 1&3&9\\-2&-2&6\\-1&1&-1\end{pmatrix},\quad 
\widetilde{\tau_1}=\frac14\begin{pmatrix} 1&-3&9\\-2&2&6\\-1&-1&-1\end{pmatrix},
$$
one may easily show that all the conditions in Lemma \ref{lembad2} are satisfied.
Note that $\text{ker}(\tau_1+I)=\langle (0,-3,1)\rangle$.
It follows that every positive integer congruent to 1 modulo 4 which is not of the form $45k^2 (k\in \z)$ is represented by $K$.
Since $K$ does represent 45 as $2\cdot 3^2+3\cdot 3^2=45$, we have the proposition.
\end{proof}

\begin{prop} \label{prop2}
The diagonal ternary quadratic form $L(15)=\langle 2,3,72\rangle$ is odd-regular.
\end{prop}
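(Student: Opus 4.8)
The plan is to mimic the proof of Proposition~\ref{prop1}: reduce the claim to a representation of $2n+1$ by the genus mate of $L(15)$, rewrite that representation so that it is visibly a value of the auxiliary form $\langle 2,3,18\rangle$, and then invoke Proposition~\ref{prop2372}. First I would record that the class number $h(L(15))$ is two, with
$$
\gen(L(15))=\bigl\{\,L(15),\ M(15)=\langle 3,8,18\rangle\,\bigr\};
$$
this is a finite check comparing the $2$-adic and $3$-adic Jordan decompositions of $\langle 2,3,72\rangle$ and $\langle 3,8,18\rangle$ (at every other prime $L(15)$ is unimodular of rank three). Let $n\ge 0$ be such that $2n+1\ra\gen(L(15))$. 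If $2n+1\ra L(15)$ there is nothing to prove, so I would assume $2n+1\ra M(15)$ and write
$$
2n+1=3x^2+8y^2+18z^2,\qquad (x,y,z)\in\z^3.
$$
Since $2n+1$ is odd, $x$ must be odd; and the identity $8y^2=2(2y)^2$ rewrites the equation as $2n+1=2(2y)^2+3x^2+18z^2$, so that $2n+1$ is already a value of $\langle 2,3,18\rangle$.

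The decisive step is then a case distinction on the parity of $z$. If $z$ is even, say $z=2z'$, then
$$
2n+1=2(2y)^2+3x^2+72z'^2
$$
is a value of $\langle 2,3,72\rangle=L(15)$ and we are done. If $z$ is odd, then from $x^2\equiv z^2\equiv 1\Mod{8}$ one obtains $2n+1\equiv 3+0+2\equiv 5\Mod{8}$, hence in particular $2n+1\equiv 1\Mod{4}$; since $2n+1$ has been exhibited above as a value of $\langle 2,3,18\rangle$, Proposition~\ref{prop2372} then yields $2n+1\ra\langle 2,3,72\rangle=L(15)$, completing the proof.

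The part I expect to require the most care is structural rather than computational: the argument rests entirely on the genus mate being exactly $\langle 3,8,18\rangle$ (so that the inclusion $\langle 3,8,18\rangle\subset\langle 2,3,18\rangle$ coming from $8y^2=2(2y)^2$ is available) and on the two residue classes $3,5\Mod{8}$ that $L(15)$ represents $2$-adically being cleanly separated by the parity of $z$. Should the genus mate turn out to be non-diagonal, or should those residues fail to split along $z$, one would have to replace the parity dichotomy by an explicit unimodular change of variables producing a value of some form to which an $S_{l,r}$-type lemma (in the spirit of Proposition~\ref{prop1136} and Lemma~\ref{lembad2}) applies; carrying that out would be the main obstacle.
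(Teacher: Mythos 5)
Your proposal is correct and follows essentially the same route as the paper: pass to the genus mate $M(15)=\langle 3,8,18\rangle$, rewrite $8y^2=2(2y)^2$ to see $2n+1$ as a value of $\langle 2,3,18\rangle$, and either absorb the representation directly into $\langle 2,3,72\rangle$ or invoke Proposition~\ref{prop2372} when $2n+1\equiv 1\Mod 4$. The only cosmetic difference is that you split on the parity of $z$ while the paper splits on the parity of $n$; these are equivalent case distinctions.
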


\begin{proof}
Let $n$ be a nonnegative integer such that $2n+1\ra \gen(L(15))$.
Note that the class number $h(L(15))$ of $L(15)$ is two and
$$
\gen(L(15))=\left\{ L(15), M(15)=\langle 3,8,18\rangle \right\}.
$$
We may assume that $2n+1\ra \langle 3,8,18\rangle$ and thus there is a vector $(x,y,z)\in \z^3$ such that 
$$
3x^2+8y^2+18z^2=2n+1.
$$
If $n\equiv 1\Mod 2$, then we have $z\equiv 0\Mod 2$ and thus 
$2n+1=2(2y)^2+3x^2+72\left(\dfrac{z}2\right)^2$ and we are done. Now we may assume that $n\equiv 0\Mod 2$ and thus $2n+1$ is congruent to 1 modulo 4. Since
$$
2n+1\ra \langle 3,8,18\rangle \ra \langle 2,3,18\rangle,
$$
the proposition follows from Proposition \ref{prop2372}.
\end{proof}

\begin{thm} \label{thmnonstable}
In Table \ref{table37}, $L(i)$ is odd-regular if $i\not\in \{ 6,18,26,27,34,37\}$.
\end{thm}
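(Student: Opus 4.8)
The plan is to show, for each of the 31 forms $L(i)$ with $i\notin\{6,18,26,27,34,37\}$, that every positive odd integer represented by $\gen(L(i))$ is already represented by $L(i)$. For each such $i$ the class number $h(L(i))$ is small (in the typical case two, with a single genus mate $M(i)$); after writing down $\gen(L(i))$ explicitly it then suffices to prove that every odd integer in $Q(M(i))$ lies in $Q(L(i))$, and similarly for each genus mate when $h(L(i))>2$. The two model cases are Propositions \ref{prop1} and \ref{prop2}: one first uses elementary congruence conditioning together with an integral change of variables to reduce the representation problem for a genus mate to an arithmetic-progression statement about an auxiliary ternary lattice, and then settles that statement with Lemma \ref{lembad1} or Lemma \ref{lembad2}.

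Concretely, suppose $2n+1\ra M(i)$, say $2n+1=Q(v)$ for $v$ in $M(i)$. First I would condition on the residue of $n$ modulo a small product of powers of $2,3,5,7$. In each \emph{degenerate} residue class one coordinate of $v$ is forced divisible by one of these primes, which produces a chain
$$
2n+1\ra K'\ra L(i)
$$
with $K'$ an explicit sublattice of $M(i)$ obtained by scaling one binary component, exactly as in the opening lines of the proofs of Propositions \ref{prop1}, \ref{prop2} (and in Lemma \ref{lemdescend}); this disposes of those classes. In the remaining \emph{generic} class I would make an integral substitution (of the kind $x_1=x',\ z_1=(x'-y)/3$ used for $L(1)$) rewriting $Q$, restricted to the relevant progression, as the quadratic form of an auxiliary ternary $\z$-lattice $N=N(i)$ (in some cases one can take $N=M(i)$ with no substitution, as for $L(3)$ in Theorem \ref{thmstable}), thereby reducing to an inclusion
$$
S_{l,r}\cap Q(N(i))\subset Q(L(i))
$$
for suitable $l\mid 2^a3^b5^c7^e$ and $r<l$. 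For this I would first test whether $N(i)\prec_{l,r}L(i)$ for a family of pairs $(l,r)$ whose progressions cover all odd residues not yet handled; when this holds, Lemma \ref{lembad1} finishes the job. When it fails I would partition $R(N(i),l,r)-R_{L(i)}(N(i),l,r)$ and exhibit transformation matrices $\tau_i,\widetilde{\tau_j}$ (with $l\tau_i,\,l\widetilde{\tau_j}\in M_3(\z)$) meeting the hypotheses of Lemma \ref{lembad2}; that lemma then yields the inclusion up to finitely many square classes $g(z_i)\mathcal{S}$ arising from the eigenvectors $z_i$, and one checks by direct computation that every \emph{odd} integer in each such class is in fact represented by $L(i)$ — just as $8k^2$ was handled for $L(1)$ and $45k^2$ for $L(15)$.

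Everything finite in this scheme — the genera and class numbers, the sets $R(N,l,r)-R_K(N,l,r)$, and the integrality and containment conditions of Lemma \ref{lembad2} — is verified with the help of a computer. The real difficulty, and where essentially all the effort goes, is (a) for each of the 31 forms, finding the correct auxiliary lattice $N(i)$ (equivalently, the right substitution, modulus, partition, and transformation matrices) so that the hypotheses of Lemma \ref{lembad2} genuinely hold, and (b) verifying that the exceptional square classes $g(z_i)\mathcal{S}$ so produced contain no odd integer missing from $Q(L(i))$. It is exactly point (b) that breaks down for $L(6),L(18),L(26),L(27),L(34),L(37)$, which is why they are left out of the statement; for the other 31 the scheme goes through and establishes odd-regularity.
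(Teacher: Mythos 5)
Your general machinery (genus mates, sublattice chains, reductions to Lemma \ref{lembad1} or Lemma \ref{lembad2}, plus a check of the exceptional square classes) is the right one, and it matches what the paper actually does for the eleven forms $L(i)$ with $i\in\{1,2,11,12,13,14,15,16,17,19,20\}$. But as written your argument has a genuine gap: beyond the two model cases already worked out (Propositions \ref{prop1} and \ref{prop2}), you supply none of the data on which the method lives --- no genus mates, no auxiliary lattices $N(i)$, no pairs $(l,r)$, no partitions or transformation matrices, no analysis of the resulting square classes --- and your closing assertion that ``for the other 31 the scheme goes through'' is precisely the statement to be proved. For a theorem whose entire content is this case-by-case verification, that is a program, not a proof. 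You also miss the device the paper actually uses for the remaining twenty forms ($i\in\{3,4,5,7,8,9,10,21,22,23,24,25,28,29,30,31,32,33,35,36\}$): each of these is connected by $\lambda_p$-transformations, $p\in\{3,5,7\}$, to a diagonal form whose odd-regularity has already been established (for instance the stable forms of Theorem \ref{thmstable}), and one checks that odd-regularity is preserved in passing back through the transformation. This replaces twenty separate genus-mate/lattice-chain constructions by a short verification and is where most of the economy of the paper's proof lies; your scheme, even if it could be pushed through for those forms, would require substantially more unverified construction.

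A smaller but real problem is your diagnosis of the six excluded forms. The paper does not claim that Lemma \ref{lembad2} applies to $L(6),L(18),L(26),L(27),L(34),L(37)$ and fails only because the exceptional classes $g(z_i)\mathcal{S}$ contain odd integers missed by $L(i)$; it simply leaves their odd-regularity unproved and records a numerical verification up to $10^8-1$. Presenting your guess as the reason these six are omitted overstates what is known and should be removed or clearly flagged as speculation.
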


\begin{proof}
The cases when $i\in\{ 1,15\}$ are treated in Propositions \ref{prop1},\ref{prop2}.
Consider the case when $i=2$. 
Note that $h(L(2))=2$ and
$$
\gen(L(2))=\{ L(2), M(2)=\langle 1,1,36\rangle \}.
$$
Assume that $2n+1\ra M(2)$. Then there is a vector $(x,y,z)\in \z^3$ such that 
$$
2n+1=x^2+y^2+36z^2.
$$
Since $xy\equiv 0\Mod 2$, we have $2n+1\ra \langle 1,4,36\rangle$.
Thus $2n+1\ra \langle 1,1,36\rangle$.
The case when $i=11$ may be shown in a similar manner.
Next, let $i=12$.
Note that $h(L(12))=2$ and
$$
\gen(L(12))=\{ L(12), M(12)=\begin{pmatrix} 7&2\\2&16\end{pmatrix} \perp \langle 3\rangle \}.
$$
Assume that $2n+1\ra M(12)$. One may easily check that this implies $2n+1\equiv 0\Mod 3$ or $2n+1\equiv 7\Mod {12}$.
Now one may check that $M(12)\prec_{l,r}L(12)$ for three pairs of
$$
(r,l)=(3,0), (24,7), (24,19).
$$
Thus it follows from Lemma \ref{lembad1} that $2n+1\ra L(12)$.
The case when $i\in \{ 13,14,16,17,19,20\}$ may be shown in a similar manner as follows.
By using Lemma \ref{lembad1} with pairs
$$
(l,r)=\begin{cases} (4,1),(4,3) & \text{if}\ \ i\in \{ 13,14,16,17\}, \\ (8,1),(8,5) & \text{if}\ \ i\in \{ 19,20\},\end{cases}
$$
one may show that every odd integer which is represented by $M(i)$ is represented by $L(i)$.
For the remaining cases, that is
$$
i\in \{ 3,4,5,7,8,9,10,21,22,23,24,25,28,29,30,31,32,33,35,36\},
$$
one may check that $L(i)$ can be obtained from a diagonal ternary quadratic form which is already proved to be odd-regular by taking $\lambda_p$-transformations several times for some odd primes $p\in \{ 3,5,7\}$.
Moreover, one may see that the odd-regularity is preserved during taking the $\lambda_p$-transformation.
This completes the proof.
\end{proof}

\begin{rmk}
Each of the remaining 6 candidates
$$
\begin{array}{lll}
L(6)=\langle 1,3,54\rangle,& L(18)=\langle 1,5,100\rangle,&L(26)=\langle 1,9,108\rangle,\\[0.3ex]
L(27)=\langle 1,16,72\rangle,&L(34)=\langle 1,24,144\rangle,&L(37)=\langle 3,8,216\rangle
\end{array}
$$
is checked to represent all locally represented odd positive integers up to $10^8-1$ by computer.
\end{rmk}

On the other hand, we may call a ternary $\z$-lattice $L$ \textit{even-regular} if it is $S_{2,0}$-regular.

\begin{thm} \label{thmeven}
Let $L$ be a ternary $\z$-lattice such that $\mathfrak{s}(L)=\z$. Then $L$ is even-regular if and only if $\lambda_2(L)$ is regular.
\end{thm}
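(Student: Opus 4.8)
The plan is to identify the Watson transform $\Lambda_2(L)$ explicitly, isolate a single rational scaling factor as the only real content, and then reduce the theorem to a direct comparison of the sets of integers represented by $L$ and $\gen(L)$ with those represented by $\lambda_2(L)$ and $\gen(\lambda_2(L))$. First I would observe that, since $\mathfrak{s}(L)=\z$, we have $2B(x,z)\in2\z$ for all $x,z\in L$, so $Q(x+z)-Q(z)=Q(x)+2B(x,z)\equiv Q(x)\Mod 2$; hence $\Lambda_2(L)=\{x\in L:Q(x)\in2\z\}$ is exactly the even sublattice of $L$. Because $\mathfrak{n}(L)=\z$ there is a vector of odd norm, so $\Lambda_2(L)$ has index $2$ in $L$ and is again a positive definite ternary $\z$-lattice. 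I would then let $c$ be the positive generator of $\mathfrak{n}(\Lambda_2(L))$; from $2L\subseteq\Lambda_2(L)\subseteq L$, $\mathfrak{n}(2L)=4\z$ and $\mathfrak{n}(\Lambda_2(L))\subseteq2\z$ it follows that $c\in\{2,4\}$, and by definition $\lambda_2(L)$ is $\Lambda_2(L)$ with its bilinear form multiplied by $1/c$.

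The core of the argument will be the two identities
$$
\{\,N\in\n:\ N\ \text{even},\ N\ra L\,\}=\{\,cn:\ n\in\n,\ n\ra\lambda_2(L)\,\}
$$
and
$$
\{\,N\in\n:\ N\ \text{even},\ N\ra\gen(L)\,\}=\{\,cn:\ n\in\n,\ n\ra\gen(\lambda_2(L))\,\}.
$$
The first is essentially immediate: an even $N$ is represented by $L$ precisely when it is the norm of a vector of even norm, i.e. of a vector of $\Lambda_2(L)$, which happens precisely when $c\mid N$ and $N/c\ra\lambda_2(L)$ (when $c=4$, the condition $4\mid N$ is automatic since $\mathfrak{n}(\Lambda_2(L))=4\z$). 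For the second I would invoke the local--global principle $N\ra\gen(M)\iff N\ra M_p$ for every prime $p$ (valid for positive definite $M$ and $N>0$) together with the fact that Watson's transform commutes with localization: for odd $p$ one has $(\lambda_2(L))_p=(L_p)^{1/c}$ with $1/c$ a $p$-adic unit, so $n\ra(\lambda_2(L))_p\iff cn\ra L_p$, while for $p=2$ a short computation with the $2$-adic Jordan splitting of $L_2$ gives $Q((\lambda_2(L))_2)=\frac1c\bigl(Q(L_2)\cap2\z_2\bigr)$, which yields the same equivalence. Intersecting the local equivalences over all $p$ produces the second identity.

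Granting these two identities, the theorem follows at once. Condition (ii) in the definition of $S_{2,0}$-regularity holds automatically, since $L$, being positive definite, already represents positive even integers such as $4Q(v)$. Condition (i) asserts that every positive even integer represented by $\gen(L)$ is represented by $L$; dividing the two displayed identities through by $c$ turns this precisely into the statement that $\lambda_2(L)$ represents every positive integer represented by its genus, i.e. that $\lambda_2(L)$ is regular. This establishes the equivalence in both directions simultaneously. (Note that $\lambda_2(L)$ need not be classically integral, but regularity is defined for non-classic integral forms as well, so this is harmless.)

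I expect the main obstacle to be nothing conceptually deep, but rather the honest bookkeeping of the scaling factor $c$ --- especially in the case $c=4$, where $\lambda_2$ contracts norms by a factor of $4$ and one must verify carefully that $\mathfrak{n}(\Lambda_2(L))=4\z$ forces every even integer represented by $L$, and by $\gen(L)$, to be divisible by $4$ (so that passing to $N/c$ loses nothing). The local computation at $p=2$, namely that the even values of $L_2$ are exactly $c$ times the values of $(\lambda_2(L))_2$, likewise requires running through the finitely many $2$-adic Jordan shapes available to a ternary $L_2$ with $\mathfrak{s}(L_2)=\z_2$; everything else is the standard theory of representation by a genus.
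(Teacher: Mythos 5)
Your proposal is correct and takes essentially the same route as the paper: identify $\Lambda_2(L)=\{x\in L:Q(x)\in 2\z\}$, note $\mathfrak{n}(\Lambda_2(L))=k\z$ with $k\in\{2,4\}$, and reduce the statement to the equivalences $n\ra\lambda_2(L)\iff kn\ra L$ and $n\ra\gen(\lambda_2(L))\iff kn\ra\gen(L)$, checked prime by prime. The only cosmetic difference is your appeal to a Jordan-splitting computation at $p=2$, which is unnecessary since $Q\bigl(\Lambda_2(L)_2\bigr)=Q(L_2)\cap 2\z_2$ follows at once from the defining description of $\Lambda_2(L)_2$.
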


\begin{proof}
Since $\mathfrak{s}(L)=\z$, we have
$$
\Lambda_2(L)=\{ x\in L : Q(x)\equiv 0\Mod 2 \}
$$
and it is well known that the local structure $\Lambda_2(L)_q$ of $\Lambda_2(L)$ over $\z_q$ for a prime $q$ is
\begin{align*}
\Lambda_2(L)_q=\begin{cases} \{ x\in L_2: Q(x)\in 2\z_2 \} &\ \ \text{if}\ \ q=2, \\
L_q &\ \ \text{otherwise}.\end{cases}
\end{align*}
Since $\mathfrak{n}(\Lambda_2(L))$ is $2\z$ or $4\z$,
we let $k\in \{2,4\}$ so that $\mathfrak{n}(\Lambda_2(L))=k\z$.
With these facts, one may easily show the following two equivalences;
\begin{align*}
n&\ra \lambda_2(L) \\
\Leftrightarrow kn&\ra \Lambda_2(L) \\
\Leftrightarrow kn&\ra L
\end{align*}
and
\begin{align*}
n&\ra \gen(\lambda_2(L)) \\
\Leftrightarrow kn&\ra \gen(\Lambda_2(L)) \\
\Leftrightarrow kn&\ra \gen(L).
\end{align*}
The theorem follows immediately from this.
\end{proof}

\begin{rmk}
Note that the condition $\mathfrak{s}(L)=\z$ in Theorem \ref{thmeven} is necessary. For example, consider $L=\langle 2\rangle \perp \begin{pmatrix} 1&\frac12 \\ \frac12 & 2\end{pmatrix}$. One may easily check that $L$ is even-regular but $\lambda_2(L)\simeq \langle 1\rangle \perp \begin{pmatrix} 2&1\\1&4\end{pmatrix}$ is not regular.
\end{rmk}


\end{document}